\newif\iffurther
\newtheorem{thm}{Theorem}[section]
\newtheorem{cor}[thm]{Corollary}
\newtheorem{lem}[thm]{Lemma}
\newtheorem{prop}[thm]{Proposition}
\newtheorem{ques}[thm]{Question}
\newtheorem{fact}[thm]{Fact}
\def\[{\left[}
\def\]{\right]}
\def\wt{\varphi}
\def\fdim{{\operatorname{fdim}}}
\def\GK{{\operatorname{GKdim}}}
\def\Span{{\operatorname{Span}}}
\def\ind{{\operatorname{\mathbbm{1}}}}
\def\l{{\operatorname{length}}}
\long\def\forget#1\forgotten{{}}
\title[Complexity and recurrence in words and related structures]{Complexity and recurrence in infinite words and related structures}
\author{Be'eri Greenfeld} \address{Department of Mathematics, University of Washington, Seattle, WA 98195, USA} \email{grnfld@uw.edu}
\author{Carlos Gustavo Moreira} \address{SUSTech International Center for Mathematics, Shenzhen, Guangdong,
People’s Republic of China;
IMPA, Estrada Dona Castorina 110, 22460-320, Rio de Janeiro, Brazil} \email{gugu@impa.br}
\author{Efim Zelmanov} \address{SUSTech International Center for Mathematics, Shenzhen, Guangdong,
People’s Republic of China} \email{efim.zelmanov@gmail.com}
\begin{document}

\begin{abstract}
We study the asymptotics and fine-scale behavior of quantitative combinatorial measures of infinite words and related dynamical and algebraic structures. 

We construct infinite recurrent words $w$ whose complexity functions $p_w(n)$ are arbitrarily close to linear, but whose discrete derivatives are not bounded from above by $p_w(n)/n$. Moreover, we construct words of polynomially bounded complexity whose discrete derivatives exceed $p_w(n)/n^\varepsilon$ infinitely often, for every given $\varepsilon>0$. These provide negative answers in a strong sense to an open question of Cassaigne from 1997, showing that his theorem on words of linear complexity is best possible.

Next, we characterize, up to a linear multiplicative error, the complexity functions of strictly ergodic subshifts, showing that every non-decreasing, submultiplicative function arises in this setting. This gives the first `industrial' construction of strictly ergodic subshifts of prescribed subexponential complexity.

We then investigate quantitative recurrence in uniformly recurrent words and, as an application, address a question of Bavula from 2006 related to holonomic inequalities on the spectrum of possible filter dimensions of simple associative algebras: we construct simple algebras of prescribed filter dimension in $[1,\infty)$ and essentially settling the problem entirely in the graded case. 
Throughout, we construct uniformly recurrent words of linear complexity and with arbitrary polynomial recurrence growth.
\end{abstract}

\maketitle

\section{Introduction}

\subsection{Infinite words and their complexity}
Let $\Sigma$ be a finite alphabet and let
$$
w=w_1w_2\cdots \in \Sigma^{\mathbb{N}}
$$
be an infinite word over $\Sigma$. The \emph{complexity function} of $w$
$$
p_w(n) := \# \{\text{Subwords of } w \ \text{of length } n\}
$$
counts the number of subwords (also called `factors') of $w$ of length $n$. The complexity function of an infinite word (or more generally, of a subshift---that is, a symbolic dynamical system under the shift maps $\Sigma^{\mathbb{N}}\to \Sigma^{\mathbb{N}}$ or $\Sigma^{\mathbb{Z}}\to \Sigma^{\mathbb{Z}}$) is a fundamental measure of `how complicated' the word is. Often, the complexity functions of infinite words reflect interesting properties or phenomena of processes from various mathematical contexts from which these words arise. For instance, infinite words of bounded complexity are precisely the eventually periodic words; words of the slowest possible unbounded complexity (namely, $p_w(n)=n+1$) are obtained from irrational rotations on the circle, and can be thought of as `1D quasi-crystals' \cite{MH}; and irrational numbers whose decimal (or any base) expansion has linear complexity are transcendental \cite{AB07}. Complexity functions of infinite words, and more generally of hereditary formal languages, are related to the growth of groups, semigroups, and associative algebras; see, e.g., \cite{BZ,BBL,Gri1,Gromov,Gromov2,KL,SB,Tro}.

\medskip

How do complexity functions of infinite words look? Let $w$ be an infinite word. Then $p_w(n)$ is non-decreasing, and in fact, either bounded or increasing (the Morse-Hedlund theorem), and submultiplicative. 
A longstanding open problem of fundamental importance in combinatorics on words and symbolic dynamics is the `inverse problem': to characterize the possible complexity functions of infinite words (see \cite{All94, Cas2, Cas, Cas3,Fer96,Fer99,GMZ,MM1,MM2} and references therein for the question and its variations, as well as for realizations of interesting families of functions as complexity functions of words). Conceptually, any additional feature of complexity functions should be grounded in some combinatorial reasoning that reveals a fundamental part of the nature of combinatorics of infinite words. In \cite{GMZ} we proved that, up to the notion of asymptotic equivalence\footnote{For non-decreasing functions $f,g\colon \mathbb{N}\rightarrow \mathbb{N}$, we say $f\preceq g$ if there exist constants $C,D>0$ such that $f(n)\leq Cg(Dn)$ for all $n\gg 1$; and $f\sim g$ if $f\preceq g$ and $g\preceq f$.} from large-scale geometry, the above conditions are the only ones. While playing an essential role in large-scale geometry and geometric group theory, this notion of equivalence is rather coarse and, for instance, identifies all functions that are $\Theta(n)$.

\subsection{Discrete derivatives of complexity functions and Cassaigne's question}
When considering complexity functions not up to asymptotic equivalence but numerically, more properties and obstacles to realizing a function as the complexity function of an infinite word appear.
Given a function $f\colon \mathbb{N}\to \mathbb{N}$, its \emph{discrete derivative} is defined by
$$
f'(n):=f(n)-f(n-1)
$$
(with $f'(0)=0$). The discrete derivative is a finer invariant, not preserved under asymptotic equivalence. In \cite{Cas3}, Cassaigne proved that if $p_w(n)=O(n)$ then $p_w'(n)=O(1)$, affirming a conjecture of Ferenczi and providing one of the only known obstacles preventing a function from describing the complexity of an infinite word. Cassaigne further asked whether this can be generalized as follows: if $p_w(n)$ is polynomially bounded, must $p_w'(n)=O\left(\frac{p_w(n)}{n}\right)$? 
Note that this phenomenon can indeed occur only for words of polynomially bounded complexity (Lemma \ref{lem:calculus} below). We show that the phenomenon in Cassaigne's Theorem is really unique to the linear complexity setting, answering the above question in the negative, in the following strong senses. First,

\begin{thm} \label{thm:Cassaigne}
    Let $g\colon \mathbb{N}\rightarrow \mathbb{N}$ be any superlinear function, i.e., $\frac{g(n)}{n}\xrightarrow{n\rightarrow \infty}~\infty$. Then there exists a recurrent infinite word $w\in \Sigma^{\mathbb{N}}$ over a finite alphabet $\Sigma$ such that $p_w(n)\leq g(n)$ for all $n\gg 1$ and
    $$
    p_w'(n)\neq O\left(\frac{p_w(n)}{n}\right).
    $$
\end{thm}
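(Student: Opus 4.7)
The plan is to construct $w$ via a hierarchical substitutive (or $\mathcal{S}$-adic) scheme, engineered so that its complexity grows nearly linearly between a sparse sequence of scales $n_1 < n_2 < \cdots$ but experiences a large, localized jump at each $n_k$. Since $g(n)/n \to \infty$, I first choose $n_k$ increasing rapidly and jumps $J_k \in \mathbb{N}$ such that
\[
\sum_{i \leq k} J_i \;\leq\; \tfrac{1}{2}\,g(n_k) \qquad \text{and} \qquad \frac{J_k\, n_k}{g(n_k)} \;\xrightarrow{k \to \infty}\; \infty;
\]
for instance, take $J_k = \lfloor g(n_k)/\omega_k \rfloor$ with $\omega_k \to \infty$ sufficiently slowly, and pick $n_{k+1}$ large enough that $g(n_{k+1}) \gg g(n_k)$. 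This is the only place superlinearity of $g$ is used, and it is exactly what makes $J_k$ much larger than $g(n_k)/n_k$ possible.

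Next, I would construct a primitive sequence of substitutions $(\tau_k)_{k \geq 1}$ over a fixed finite alphabet $\Sigma$ and let $w$ be the $\mathcal{S}$-adic limit of $\tau_1 \circ \cdots \circ \tau_k(a)$ as $k \to \infty$. Each $\tau_k$ is designed so that, when composed with the earlier $\tau_1, \ldots, \tau_{k-1}$, it introduces exactly $J_k$ new right-special factors at length $n_k - 1$, while contributing only a linear amount of additional complexity at intermediate lengths. Primitivity of the $\tau_k$ ensures that the resulting subshift is minimal, and in particular that $w$ is uniformly recurrent (hence recurrent).

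The verification then runs as follows. By design, $p_w(n_k) \leq \sum_{i \leq k} J_i + O(n_k) \leq g(n_k)$ for all large $k$, so the upper complexity bound holds. On the other hand, the burst at $n_k$ yields
\[
p_w'(n_k) \;\geq\; J_k \;\gg\; \frac{g(n_k)}{n_k} \;\geq\; \frac{p_w(n_k)}{n_k},
\]
so the ratio $n\, p_w'(n)/p_w(n)$ is unbounded along $\{n_k\}$, which is precisely the negation of $p_w'(n) = O(p_w(n)/n)$.

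The main obstacle will be the combinatorial engineering of the substitutions. One must simultaneously (i) inject precisely the prescribed number of new right-special factors at length $n_k - 1$, (ii) avoid side effects at neighboring lengths that would push $p_w$ above $g$, and (iii) maintain primitivity of the composition while keeping $\Sigma$ of fixed finite size. I expect the cleanest path is a modular design in which each $\tau_k$ is built from a small set of auxiliary symbols that are then absorbed by $\tau_{k+1}$ into the base alphabet, so that the limit lives over a fixed $\Sigma$ yet still carries enough degrees of freedom per level to realize the desired jump $J_k$ exactly at the intended scale.
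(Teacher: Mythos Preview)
Your proposal identifies a plausible target but does not actually hit it: the entire content of the proof would lie in the construction of the substitutions $\tau_k$ with properties (i)--(iii), and you have not constructed them. Saying ``each $\tau_k$ is designed so that it introduces exactly $J_k$ new right-special factors at length $n_k-1$ while contributing only a linear amount of additional complexity at intermediate lengths'' is a wish, not an argument. Controlling the number of right-special factors at one prescribed scale while simultaneously bounding complexity at \emph{all} other scales, over a fixed finite alphabet, and keeping the scheme primitive, is precisely the hard part---and you say so yourself in the final paragraph without resolving it. As written, this is a plan for a proof, not a proof.

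The paper takes a completely different route that sidesteps this combinatorial engineering. The key observation is Lemma~\ref{lem:calculus}: the condition $p_w'(n)=O(p_w(n)/n)$ forces a \emph{doubling property} $p_w(Kn)\le L\,p_w(n)$, and the doubling property---unlike the discrete derivative itself---is robust under asymptotic equivalence $\sim$. So instead of building $w$ directly, the paper builds a \emph{function} $f$ that is increasing, satisfies $f(2n)\le f(n)^2$, is bounded by $g$, yet violates every doubling bound (one arranges $f(2d_i)=i\,f(d_i)$ along a sparse sequence of powers of $2$). The black-box realization theorem of \cite{GMZ} then produces a recurrent word $w$ with $p_w\sim f$; if $p_w'(n)=O(p_w(n)/n)$ held, Lemma~\ref{lem:calculus} would give $p_w$ a doubling bound, which transfers through $\sim$ to $f$, a contradiction. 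This indirection is the idea you are missing: it converts a fine-scale statement about $p_w'$ into a coarse-scale statement that survives the passage through asymptotic equivalence, so one never has to control $p_w'$ pointwise by hand.
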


We next show that it is possible, even within the realm of polynomially bounded complexity, that the discrete derivative $p_w'(n)$ in fact gets very close to $p_w(n)$:

\begin{thm} \label{thm:Cassaigne2}
For every $\varepsilon>0$ there exists an infinite word $w$ over a finite alphabet $\Sigma$ such that $p_w(n)$ is polynomially bounded and
$$
p_w'(n)\geq \frac{p_w(n)}{n^\varepsilon} \quad \text{for infinitely many } n.
$$
In particular,
$$
p_w'(n)\neq O\left(\frac{p_w(n)}{n^\varepsilon}\right).
$$
\end{thm}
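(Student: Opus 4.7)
We build $w$ over $\Sigma=\{0,1,\#\}$ as a carefully designed concatenation of blocks whose collective complexity stays polynomial while the complexity function exhibits large jumps at a prescribed sparse sequence of scales. Fix an integer $d > 2/\varepsilon$, and a super-exponentially growing sequence $n_1 < n_2 < \cdots$ (say $n_{k+1}\geq 2^{n_k}$). We take
$$w \;=\; B_1 \cdot \#^{L_1} \cdot B_2 \cdot \#^{L_2} \cdot B_3 \cdots$$
with long $\#$-spacers $L_k \geq \ell_k+\ell_{k+1}$. Each $B_k$ is a \emph{partial de Bruijn trail at scale $n_k$}: an Eulerian trail in a $2$-regular (in- and out-degree $2$) subgraph of the binary de Bruijn graph on length-$(n_k{-}1)$ vertices with $M_k = \Theta(n_k^{d - \varepsilon})$ edges over $M_k/2$ vertices. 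Consequently $B_k$ has length $\ell_k = M_k + n_k - 1$, contains exactly $M_k$ distinct length-$n_k$ factors, and has $M_k/2$ length-$(n_k{-}1)$ prefixes each admitting both right extensions, yielding an internal jump $p_{B_k}'(n_k) = M_k/2$. By padding each $B_k$ to begin and end with the fixed binary string $0^{n_k}$, factors of $w$ containing any $\#$-symbol contribute only $O(n)$ distinct factors at each length $n$, since each such factor is determined by a prefix/suffix of the $\#$-run together with a (uniform) piece of the $0^{n_k}$ boundary.

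The key challenge is that distinct binary subwords could accumulate across blocks: a priori $\sum_j \ell_j$ diverges. To control this, we construct the blocks \emph{compatibly}: the sub-de-Bruijn graph at scale $n_{k+1}$ underlying $B_{k+1}$ is chosen as a refinement of that at scale $n_k$ underlying $B_k$, so that for every $n < n_k$, each length-$n$ subword of $B_{k+1}$ is already a length-$n$ subword of $B_k$. Thus the blocks form a compatible tower whose factor structure at any fixed length $n$ stabilizes once $k$ is large enough. Combined with the uniform $0^{n_k}$-padding and the long $\#$-spacers, the distinct length-$n$ factors of $w$ are bounded by the dominant contribution from the unique block $B_{k(n)}$ whose scale $n_{k(n)}$ is comparable to $n$, plus geometrically decreasing lower-scale contributions, yielding
$$p_w(n) \;=\; O(n^d).$$

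Finally, at each scale $n_k$ we have $p_w'(n_k) \geq M_k/2 = \Omega(n_k^{d-\varepsilon})$ and $p_w(n_k) = O(n_k^d)$, giving
$$\frac{p_w'(n_k)}{p_w(n_k)} \;=\; \Omega\!\left(\frac{1}{n_k^\varepsilon}\right),$$
which yields $p_w'(n_k) \geq p_w(n_k)/n_k^\varepsilon$ for all sufficiently large $k$ after absorbing the implicit constant by running the construction with $\varepsilon/2$ in place of $\varepsilon$. The main technical difficulty lies in the compatible design of the tower of sub-de-Bruijn graphs: one must produce an increasing chain of balanced subgraphs of de Bruijn graphs of growing orders, with each level introducing the prescribed $\Theta(n_k^{d-\varepsilon})$-sized branching at its own scale while simultaneously preserving and extending the lower-scale factor structure. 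Executing this requires explicit Eulerian-trail constructions on carefully chosen sub-de-Bruijn graphs, together with a delicate analysis of boundary effects to ensure the $\#$-padding does not spoil the compatibility across scales.
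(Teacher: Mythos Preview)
Your plan has a genuine gap beyond the deferred construction of the compatible tower: even granting such a tower, the polynomial bound $p_w(n)=O(n^d)$ does not follow from the compatibility condition you state. That condition only asserts that for $n<n_k$ the length-$n$ factors of $B_{k+1}$ already lie in $B_k$; it says nothing about intermediate scales $n_k\le n\ll n_{k+1}$. Concretely, take $n=2n_k$. Viewed through its length-$n_k$ factors, $B_{k+1}$ is a walk of length $\approx n_{k+1}^{\,d-\varepsilon}$ in the $2$-regular graph $G_k$ on $M_k/2\approx n_k^{\,d-\varepsilon}/2$ vertices, and its length-$2n_k$ factors are sub-walks of $n_k+1$ edges in $G_k$. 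Nothing in your hypotheses prevents $B_{k+1}$ from realizing on the order of $M_k\cdot 2^{n_k}$ distinct such sub-walks---your choice $n_{k+1}\ge 2^{n_k}$ makes $B_{k+1}$ long enough to accommodate them. Thus the only bound available is $p_w(2n_k)\le \ell_{k+1}\approx n_{k+1}^{\,d-\varepsilon}$, which dwarfs $(2n_k)^d$. To repair this you would need each $B_{k+1}$ \emph{itself} to have complexity $O(n^d)$ at every scale $n\le n_{k+1}$, not merely at scales below $n_k$; that is essentially the original problem recurring inside each block, and the super-exponential gaps between the $n_k$ give you no leverage on it.

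The paper avoids this difficulty by discarding super-exponential gaps altogether. It works at the uniform geometric scale $n_k=3^{k-1}$ and builds sets $X_k\subseteq\Sigma^{n_k}$ at \emph{every} level with $X_{k+1}\subseteq X_k\,0^{n_k}X_k$, so that $p_w(n_k)\le 4n_k|X_k|$ holds for all $k$ and the polynomial bound is immediate at every scale. The sizes $|X_k|$ alternate between short ``squaring'' phases ($|X_{k+1}|=|X_k|^2$, for $r$ consecutive steps) and long ``constant'' phases ($|X_{k+1}|=|X_k|$). The large derivative is then extracted not at a single prescribed scale but by pigeonhole over a short window $[n_{k_{l+1}},\,n_{k_{l+1}}+3t]$ with $t=n_{k_l+r}$: one proves the two-sided estimate $t\,|X_{k_{l+1}}|^2 + p_w(n_{k_{l+1}})\le p_w(n_{k_{l+1}}+3t)\le Kt\,|X_{k_{l+1}}|^2$, so some $m$ in that window satisfies $p_w'(m)\ge \tfrac{1}{3}|X_{k_{l+1}}|^2$ while $p_w(m)/m\le K|X_{k_{l+1}}|^2\cdot t/n_{k_{l+1}}$. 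The ratio $t/n_{k_{l+1}}$ is driven below $n_{k_{l+1}}^{\varepsilon-1}$ by making the constant phase long relative to the squaring phase (i.e.\ choosing $r$ with $2^{1-r}<\varepsilon$).
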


\subsection{Complexity of strictly ergodic subshifts} 
A dynamical system $(\mathcal{X},T)$ where $\mathcal{X}$ is a compact topological space and $T\colon \mathcal{X}\rightarrow \mathcal{X}$ is a homeomorphism is \emph{uniquely ergodic} if it admits a unique invariant Borel probability measure. A minimal dynamical system which is also uniquely ergodic is called \emph{strictly ergodic}. In \cite{G1,G2,GS} strictly ergodic subshifts of prescribed entropy have been constructed. We focus on $\mathbb{Z}$-subshifts over some finite alphabet $\Sigma$.

In \cite{GMZ}, we provided an asymptotic characterization of the complexity functions of infinite words and recurrent words, hence of $\mathbb{Z}$-subshifts, and gave a similar realization theorem for complexity functions of minimal subshifts (equivalently, uniformly recurrent words) up to an error factor.

We realize every ``natural candidate'' function --- namely, non-decreasing and submultiplicative --- as the complexity function of some \emph{strictly ergodic} $\mathbb{Z}$-subshift, up to a linear error factor. To the best of our knowledge, this is the first `industrial' construction of zero-entropy strictly ergodic subshifts of prescribed complexity.

\begin{thm} \label{thm:strictly}
Let $f\colon \mathbb{N}\rightarrow \mathbb{N}$ be a non-decreasing, submultiplicative function. Then there exists a strictly ergodic $\mathbb{Z}$-subshift $\mathcal{X}$ such that 
$$
f(n)\preceq p_\mathcal{X}(n)\preceq n f(n).
$$
\end{thm}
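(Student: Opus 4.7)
The plan is to revisit the hierarchical construction from \cite{GMZ}, which realizes $f$ (up to a linear factor) as the complexity of a uniformly recurrent word, and to refine it into a primitive, balanced $S$-adic construction. Primitivity together with proper balancing will automatically yield unique ergodicity (via the $S$-adic analogue of Michel's theorem for primitive substitutions), while the refinement will be arranged so that the complexity inflates by at most a factor of $n$.

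\textbf{Step 1.} The \cite{GMZ} realization of $f$ proceeds through a hierarchy of admissible finite blocks $B_1, B_2, \dots$ with $|B_k|\to\infty$, each $B_{k+1}$ being a concatenation of copies of $B_k$ (with short transition words), whose nested subword sets at each scale realize $f$. The submultiplicativity of $f$ is what allows this hierarchical construction to proceed. The flexibility in \emph{how} the $B_k$'s are arranged inside $B_{k+1}$ is the degree of freedom we will exploit.

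\textbf{Step 2.} I would modify the concatenation rule so that $B_{k+1}$ contains every $B_k$ as a factor with \emph{uniform frequency} up to a bounded multiplicative distortion, and so that every factor of $B_{k+1}$ of length at most $|B_k|$ has an approximately well-defined frequency. Operationally, lengthen $B_{k+1}$ by an extra polynomial-in-$|B_k|$ factor and cycle through the $B_k$-blocks in a balanced manner (for instance via a Bratteli--Vershik style ordering or a de Bruijn-type arrangement), producing a primitive, proper $S$-adic sequence. The resulting subshift $\mathcal{X}$ is minimal, because every $B_k$-block occurs with bounded gap at every higher level by primitivity; it is uniquely ergodic, since in a primitive proper $S$-adic construction with bounded distortion of incidence matrices, factor frequencies converge uniformly; and it satisfies $f(n)\preceq p_\mathcal{X}(n)\preceq n f(n)$, the lower bound being inherited from \cite{GMZ}, and the upper bound coming from the fact that the balancing adds at most $n$ new factors of length $n$ per $B_k$-class, corresponding to the $\leq n$ possible offsets of the sliding window across a $B_k$-boundary.

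\textbf{Main obstacle.} The technical heart of the proof is controlling the complexity inflation while enforcing unique ergodicity. Balancing the frequencies of $B_k$-blocks inside $B_{k+1}$ a priori creates many new factors straddling $B_k$-boundaries, and one must show that these amount to at most a factor-$n$ increase over $f(n)$, uniformly in $k$ and $n$. This requires a careful two-parameter accounting relating the length scale $n$ at which complexity is measured and the hierarchy level $k$, together with a tuning of the growth rate $|B_{k+1}|/|B_k|$ against the scales at which the balancing is carried out, so that at each $n$ only the finitely many ``active'' levels contribute and the total overhead stays within the allowed factor $n$.
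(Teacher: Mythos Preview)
Your approach and the paper's diverge at the core mechanism for unique ergodicity, and yours remains a sketch with a real gap where you yourself flag the ``main obstacle''.

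The paper does \emph{not} rebalance anything. It keeps the \cite{GMZ} recursion $W(k+1)=W(k)C(k)$ with $C(k)\subseteq W(k)$, $|C(k)|=c_k$, and $N_k=|W(k)|$ satisfying $f(2^k)\leq N_k\leq 2f(2^{k+1})$, and proves (for subexponential $f$; the exponential case falls under the cited Grillenberger constructions) that necessarily $c_k=1$ for infinitely many $k$: if not, then eventually $c_{k+1}=N_k$ and $N_{k+1}=N_k^2$, forcing $f$ to grow exponentially. At each such singleton step $C(k)=\{v\}$, every word in $W(k+1)=W(k)\cdot v$ has the \emph{same} right half, so for any fixed factor $u$ the interval $I_k$ spanned by $\{\varphi_u(z):z\in W(k)\}$ satisfies $\operatorname{length}(I_{k+1})\leq\tfrac12\operatorname{length}(I_k)+O(2^{-k})$. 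Infinitely many halvings force $\operatorname{length}(I_k)\to 0$, hence uniform convergence of factor frequencies and unique ergodicity. Crucially, there is no block-length inflation and no new boundary combinatorics, so the \cite{GMZ} complexity bounds $f(n)\preceq p_\mathcal{X}(n)\preceq nf(n)$ carry over untouched.

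Your plan---force each $B_{k+1}$ to contain every $B_k$ with near-uniform frequency via a de~Bruijn or Bratteli--Vershik arrangement---is a different and harder route, and two steps are not yet pinned down. First, ``primitive proper $S$-adic with bounded distortion'' is not by itself a unique-ergodicity theorem: one needs the products of incidence matrices to contract the simplex of invariant measures, which your balancing is meant to deliver but which must actually be verified for whatever matrices your scheme produces. Second, the obstacle you name is genuine and not resolved by the offset heuristic: cycling through all $s_k$ level-$k$ blocks forces $|B_{k+1}|\gtrsim s_k\,|B_k|$, so consecutive scales may be far apart, and you have not shown how to control $p_\mathcal{X}(n)$ for $n$ between $|B_k|$ and $|B_{k+1}|$ within a factor of $n$. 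The paper's singleton-step trick sidesteps both issues entirely: unique ergodicity is extracted from the growth hypothesis on $f$ rather than from any engineered balancing, so nothing in the construction or its complexity analysis has to change.
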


Notice that in the above form, one cannot relax the linear error factor in the upper bound, since for any sub-linear, unbounded function $f$, there is no subshift whose complexity is $\sim f$.

\subsection{Quantitative recurrence, infinite-dimensional simple algebras and Bavula's question}
In the last part, we turn to quantify \emph{how recurrent a uniformly recurrent word can be}.
Recall that $w$ is uniformly recurrent if for every finite factor $u$ of $w$ there exists some $K_u$ such that every factor of $w$ of length $K_u$ contains an occurrence of $u$. The \emph{recurrence function} of $w$ is a quantitative measure of the efficiency of recurrence in $w$:
$$
Rec_w(n) = \min\Big\{K\in \mathbb{N}\ \colon\ \substack{\text{every factor of}\ w\ \text{of length}\ K\ \text{contains} \\ \text{an occurrence of every factor of}\ w\ \text{of length}\ n} \Big\}
$$
(see \cite{CasRec,CC06} and references therein.)

\begin{prop} \label{prop:arbitrary rec}
Let $\alpha\geq 1$ be a real number. Then there exists a uniformly recurrent word $w$ whose complexity function is linear and whose recurrence function has degree of polynomial growth $\alpha$, namely,
$$
p_w(n) \asymp n
\quad\text{and}\quad
\overline{\lim\limits_{n\rightarrow \infty}} \log_n Rec_w(n) = \alpha.
$$
\end{prop}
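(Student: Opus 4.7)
The plan is to realize $w$ as a Sturmian word associated to an irrational rotation $T_\beta\colon x\mapsto x+\beta \pmod 1$ of $[0,1)$, where $\beta=[0;a_1,a_2,\ldots]$ has a carefully chosen continued fraction expansion. Any such $w$ is uniformly recurrent and satisfies $p_w(n)=n+1$, so the linear complexity condition is free; the entire content of the proof lies in tuning $\beta$ so that the degree of polynomial growth of $Rec_w$ equals exactly $\alpha$.

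The key ingredient is the classical estimate relating the recurrence function of a Sturmian word to the denominators $q_k$ of the convergents of $\beta$: for $q_k\le n<q_{k+1}$, one has $Rec_w(n)\asymp q_{k+1}$. This follows from the three-distance theorem---factors of $w$ of length $n$ are encoded by the intervals that $n$ consecutive rotation iterates cut out of $[0,1)$, and $Rec_w(n)$ is controlled by the longest orbit return-time to the shortest such interval, which is $\asymp q_{k+1}$.

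I then choose the partial quotients to force $q_{k+1}\asymp q_k^\alpha$. Set recursively $a_{k+1}:=\max\bigl(1,\lfloor q_k^{\alpha-1}\rfloor\bigr)$. For $\alpha=1$ this gives $a_k\equiv 1$, i.e., the Fibonacci word. For $\alpha>1$, the Euclidean recursion $q_{k+1}=a_{k+1}q_k+q_{k-1}$ yields $q_{k+1}\asymp q_k^{\alpha}$, so $\log q_k$ grows like $\alpha^k$. Substituting $n=q_k$ into the recurrence estimate gives $Rec_w(q_k)\asymp q_{k+1}\asymp q_k^{\alpha}$, hence $\limsup_{n\to\infty}\log_n Rec_w(n)\ge \alpha$. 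Conversely, for any $n$ with $q_k\le n<q_{k+1}$ one has $Rec_w(n)\le C q_{k+1}\le C' q_k^{\alpha}\le C' n^{\alpha}$, so $\limsup_{n\to\infty}\log_n Rec_w(n)\le \alpha$. Together these give the desired equality.

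The main obstacle is establishing the two-sided bound $Rec_w(n)\asymp q_{k+1}$ with sufficient precision. The upper bound follows readily from the three-distance theorem together with minimality of the rotation; the matching lower bound requires identifying a specific factor whose shortest occurrence window has length $\asymp q_{k+1}$. This is well known in the Sturmian literature but needs to be invoked with care. A secondary, routine step is the induction showing $q_{k+1}\asymp q_k^{\alpha}$ under the chosen recursion, which reduces to verifying that $\log q_{k+1}/\log q_k \to \alpha$.
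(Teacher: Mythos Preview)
Your approach is correct and genuinely different from the paper's. You lean on classical Sturmian theory: any Sturmian word has $p_w(n)=n+1$, is uniformly recurrent, and its recurrence function is explicitly governed by the continued-fraction denominators of the rotation angle (indeed $Rec_w(n)=q_k+q_{k+1}+n-1$ for $q_k\le n<q_{k+1}$, by Morse--Hedlund and the three-distance theorem). Choosing $a_{k+1}=\max(1,\lfloor q_k^{\alpha-1}\rfloor)$ forces $q_{k+1}\asymp q_k^{\alpha}$, and your two-line $\limsup$ computation then finishes the job.

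The paper instead builds $w$ from scratch via an iterated block substitution $\alpha_{j+1}=(\alpha_j^2\beta_j)^{n_{j+1}}$, $\beta_{j+1}=(\beta_j^2\alpha_j)^{n_{j+1}}$, and proves linear complexity and the recurrence bounds through a chain of ad hoc combinatorial lemmas (letter densities, location of cube factors, aperiodicity of $\alpha_k\beta_k$). Your route is shorter and more conceptual for the proposition on its own. The paper's route, however, is tailored to the sequel: the same words are fed into convolution algebras of \'etale groupoids in Section~9, and the upper bound on the return function there (Proposition~9.1) relies on the specific aperiodicity statement of Lemma~8.6. Sturmian words have equally sharp aperiodicity properties, so your construction could in principle be pushed through that application too, but the paper's self-contained construction avoids importing the Sturmian machinery and keeps the algebraic part uniform with the combinatorial part.
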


The words in Proposition \ref{prop:arbitrary rec} are constructed using a carefully chosen, iterative sequence of substitutions of finite words which converge to a uniformly recurrent word with the desired properties.

We apply this construction to address an open question of Bavula in noncommutative algebra, related to the notion of \emph{filter dimension} of infinite-dimensional simple algebras. 
This notion, defined and studied by Bavula and others \cite{Bav09,Bavula,Bav05,Bav98,Bav00,Bav08}, arises naturally in the context of holonomicity, generalizing Bernstein's Inequality on the dimensions of $D$-modules. In some sense, the filter dimension measures ``how effective a Bernstein-type inequality can be'' for a given simple algebra. The filter dimension is related to quantum completely integrable systems, to the study of commutative subalgebras of noncommutative algebras \cite{Bav05}, and to the Krull and Gel'fand-Kirillov (GK) dimensions of modules. 
One of the most fundamental questions related to the filter dimension is:
\begin{ques}[{\cite{Bavula}}]
Which numbers are the possible values of the filter dimension?
\end{ques}
For instance, Bavula \cite{Bav98} proved that if $A$ is a differential polynomial ring of a smooth variety, then $\fdim(A)=1$. 
We prove:

\begin{thm} \label{thm:main Bavula}
Let $\mathbbm{k}$ be an arbitrary field. For every real number $\alpha\geq 1$ there exists a finitely generated, $\mathbb{Z}$-graded, simple $\mathbbm{k}$-algebra $A$ of GK-dimension $2$ whose filter dimension is $\alpha$.
\end{thm}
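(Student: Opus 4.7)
The plan is to combine Proposition~\ref{prop:arbitrary rec} with an algebraic construction that turns a uniformly recurrent word into a simple graded algebra, in such a way that Bavula's filter dimension matches the polynomial degree of the recurrence function. Given $\alpha \geq 1$, I would first invoke Proposition~\ref{prop:arbitrary rec} to produce a uniformly recurrent word $w \in \Sigma^{\mathbb{N}}$ with $p_w(n) \asymp n$ and $\overline{\lim_n} \log_n Rec_w(n) = \alpha$. The strategy is then: (i) build a finitely generated $\mathbb{Z}$-graded simple $\mathbbm{k}$-algebra $A = A(w)$ whose graded pieces reflect the language $\mathcal{L}(w)$ of factors of $w$, (ii) read off $\GK(A) = 2$ from the linear complexity, and (iii) identify $\fdim(A)$ with the polynomial growth rate of $Rec_w$.

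For step (i), I would take the monomial algebra $B = \mathbbm{k}\langle \Sigma\rangle / I_w$, where $I_w$ is the monomial ideal spanned by all words \emph{not} appearing as factors of $w$. This algebra is naturally $\mathbb{Z}_{\geq 0}$-graded, with $n$-th graded component of dimension exactly $p_w(n)$, giving $\GK(B) = 2$. However, $B$ is not simple. To remedy this I would pass, in the spirit of the Bell--Smoktunowicz/Okni\'nski construction of simple monomial-type algebras from uniformly recurrent words, to a carefully chosen localization or minimal-ideal quotient of a slightly enlarged algebra (for example, adjoining a degree $-1$ variable realizing a two-sided shift, and then cutting down to the unique minimal nonzero graded ideal supported on $\mathcal{L}(w)$). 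Uniform recurrence of $w$ is precisely what guarantees that every nonzero homogeneous element generates the whole of $A$ in every sufficiently high degree, so that the resulting algebra is graded-simple; a standard trick upgrades graded simplicity to outright simplicity without changing the GK-dimension or the filter dimension growth.

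The heart of the argument is step (iii). The return function $\nu_A(n)$ of Bavula—measuring the minimal $m$ such that $V^m a V^m \supseteq V^n$ for every nonzero $a \in V^n$, where $V$ is a finite-dimensional generating subspace of $A$—should translate, under the combinatorial dictionary of the previous paragraph, to the following statement about $w$: for any factor $u \in \mathcal{L}(w)$ of length at most $n$, any factor of $w$ of length $\nu_A(n)$ must contain an occurrence of $u$ in a controlled position. This is exactly the condition defining $Rec_w(n)$, up to uniformly bounded additive and multiplicative constants coming from the bookkeeping of left- and right-multiplication in $A$. Taking logarithms base $n$ and passing to $\limsup$, one obtains $\fdim(A) = \overline{\lim_n}\log_n \nu_A(n) = \overline{\lim_n} \log_n Rec_w(n) = \alpha$.

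The main obstacle I expect is step (iii), and specifically pinning down the filter dimension \emph{exactly} as $\alpha$ rather than merely bounded above or below by it. The upper bound $\fdim(A) \leq \alpha$ follows, essentially formally, from uniform recurrence together with the recurrence growth bound: any nonzero $a \in V^n$ contains a monomial coming from some factor $u$ of $w$, and $Rec_w$-many multiplications suffice to produce every other factor of length $n$. The lower bound $\fdim(A) \geq \alpha$ is more delicate: one must exhibit witness elements $a$ (typically associated to factors whose next return is nearly extremal) showing that $V^m a V^m$ genuinely fails to cover $V^n$ for all $m$ substantially smaller than $Rec_w(n)$. This requires that the word $w$ produced by Proposition~\ref{prop:arbitrary rec} is constructed so that the extremal return times are realized by \emph{many} factors, not just a negligible subset---a property that should follow from the iterative-substitution construction underlying Proposition~\ref{prop:arbitrary rec}, but whose verification in the present algebraic context is the crux of the argument.
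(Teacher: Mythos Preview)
Your overall architecture matches the paper's: build a simple $\mathbb{Z}$-graded algebra from the uniformly recurrent word of Proposition~\ref{prop:arbitrary rec} and relate its return function to $Rec_w$. The paper's concrete algebra is the convolution algebra $\mathbbm{k}[\mathfrak{G}_w]$ of the \'etale groupoid of the minimal subshift $\mathcal{X}_w$ (Nekrashevych, Steinberg), which is exactly the ``adjoin a shift and pass to the simple quotient'' construction you are gesturing at; you should name it, since its simplicity and GK-dimension are already in the literature.

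The genuine gap is that you have inverted the difficulty of the two bounds. The inequality $Rec_w(n)\preceq Ret_A(n)+n$ (Lemma~\ref{lem:lower bound on Ret}) is general and easy: a characteristic function of a cylinder that fails to reappear for a long time is a witness element, so $\fdim(A)\geq \alpha$ requires no special structure of $w$ beyond Corollary~\ref{cor:lower bound on Rec}. The hard direction is $\fdim(A)\leq \alpha$, and your sketch ``any nonzero $a\in V^n$ contains a monomial, and $Rec_w$-many multiplications recover everything'' does not work. An element $f\in W_n$ is a linear combination $\sum\alpha_i\ind_{\{d_i\}\times Z_i}$ with the $d_i$ ranging over $[-n,n]$; sandwiching $f$ between idempotents supported on a cylinder $W$ picks up contributions from \emph{all} degrees $d_i$ unless $W\cap T^{d}(W)=\emptyset$ for every $0<|d|\leq 2n$. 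Producing such a $W$ of controlled size is an \emph{aperiodicity} statement about $w$ (Lemma~\ref{lem:aperiodic} in the paper: the factors $\alpha_{k}\beta_{k}$ admit no period $\leq \widetilde{N}_k$), and it is \emph{not} a consequence of the recurrence bound $Rec_w(n)\preceq n^\alpha$---it requires re-entering the specific iterative construction underlying Proposition~\ref{prop:arbitrary rec}. This is the content of Proposition~\ref{prop:upper bound Ret}, and without it you cannot bound $Ret_A(n)$ from above at all. So Proposition~\ref{prop:arbitrary rec} cannot be used as a black box here; the theorem needs both the recurrence growth \emph{and} the fine aperiodicity of the particular word built in its proof.
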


This gives the first examples of simple algebras with arbitrary filter dimension, and essentially settles the problem for non-trivially $\mathbb{Z}$-graded simple algebras (see Proposition \ref{prop:prop}).

\section{Conventions and preliminaries}

Let $\Sigma$ be a finite alphabet. Let $w\in \Sigma^{\mathbb{N}}$ be an infinite word; we occasionally think of \emph{bi-infinite} words $w\in \Sigma^{\mathbb{Z}}$. We denote the $i$-th letter of $w$ by $w_i$ or $w[i]$, indexing $i=1,2,\dots$ (or $i\in \mathbb{Z}$ in the bi-infinite case) and $w[i,j]=w[i]\cdots w[j]$. 
We let $L_w(n)$ be the set of all length-$n$ factors of $w$, namely, $$L_w(n)=\{u\in \Sigma^n\ |\ \exists i:\ w[i,i+n-1]=u\}.$$ The set $L_w = \bigcup_{n\geq 0} L_w(n)$ is the \emph{hereditary language} (or factorial language) associated with $w$. The complexity function of $w$ is $p_w(n)=|L_w(n)|$.

We say that $w$ is \emph{recurrent} if for every finite factors $u_1,u_2\in L_w$ there exists some word $v$ such that $u_1vu_2\in L_w$. When $w$ is a one-sided infinite word, this is equivalent to the assertion that every finite subword appears infinitely many times.
We say that $w$ is \emph{uniformly recurrent} if for every finite factor $u \in L_w$ there exists some $K_u$ such that every factor of $w$ of length $K_u$ contains an occurrence of $u$.

Let $T\colon \Sigma^{\mathbb{N}}\rightarrow \Sigma^{\mathbb{N}}$ or $T\colon \Sigma^{\mathbb{Z}}\rightarrow \Sigma^{\mathbb{Z}}$ be the shift map given by $T(w)[i]=w[i+1]$. A \emph{subshift} is a non-empty, closed, shift-invariant subset of $\Sigma^{\mathbb{N}}$ or $\Sigma^{\mathbb{Z}}$.
Let $w\in \Sigma^{\mathbb{Z}}$ be a bi-infinite word over $\Sigma$. Let $$\mathcal{X}_w=\overline{\{T^i(w)\colon i\in \mathbb{Z}\}}\subseteq \Sigma^{\mathbb{Z}}$$ be the $\mathbb{Z}$-subshift generated by $w$. Then $w$ is uniformly recurrent if and only if $\mathcal{X}_w$ is minimal, namely, contains no proper subshifts.

\section{Infinite words of super-linear complexity}

In this section we prove Theorem \ref{thm:Cassaigne}.

\begin{lem} \label{lem:calculus}
    Let $f\colon \mathbb{N}\rightarrow \mathbb{N}$ be a non-decreasing, submultiplicative function. If $f'(n) = O\left(\frac{f(n)}{n}\right)$ then for every $K \geq 1$ there exists some $L$ such that $f(Kn)\leq Lf(n)$ for all $n$. In particular, $f$ is polynomially bounded.
\end{lem}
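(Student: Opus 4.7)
The plan is to convert the hypothesis into a multiplicative recurrence and then iterate.

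First, I would pick a constant $C$ and a threshold $n_0$ so that the hypothesis gives $f(n)-f(n-1) \leq C f(n)/n$ for all $n\geq n_0$. Rearranging yields $f(n)(1-C/n)\leq f(n-1)$, and hence for $n>\max(n_0, C)$
\[
f(n) \leq \frac{n}{n-C}\, f(n-1).
\]
This is the key multiplicative step: the gap ratio $f(n)/f(n-1)$ is controlled by an explicit factor tending to $1$.

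Next, I would iterate this bound from $n$ to $Kn$ (for $n$ sufficiently large relative to $K$ and $C$) to obtain
\[
f(Kn) \leq \left(\prod_{j=n+1}^{Kn} \frac{j}{j-C}\right) f(n).
\]
To bound the product, I would take logarithms and use $\log\frac{j}{j-C} = \log\bigl(1+\tfrac{C}{j-C}\bigr) \leq \tfrac{C}{j-C}$, then compare with an integral:
\[
\sum_{j=n+1}^{Kn}\log\frac{j}{j-C} \leq C\int_{n}^{Kn}\frac{dt}{t-C} = C\log\frac{Kn-C}{n-C} \xrightarrow{n\to\infty} C\log K.
\]
Thus the product is bounded by a constant depending only on $K$ and $C$ (roughly $K^C$) for all sufficiently large $n$. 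For the finitely many remaining values of $n$, enlarging the multiplicative constant $L$ absorbs them (using that $f$ takes positive integer values and is non-decreasing, so ratios $f(Kn)/f(n)$ over the finite initial range are bounded). This gives $f(Kn)\leq L f(n)$ for all $n\geq 1$.

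Finally, polynomial boundedness follows by applying the above with $K=2$: iterating $f(2n)\leq L f(n)$ gives $f(2^k)\leq L^k f(1)$, which by monotonicity yields $f(n)\leq f(1)\cdot n^{\log_2 L}$ for all $n\geq 1$. The only mild technical point is the handling of the small-$n$ regime where the inequality $1-C/n$ is not meaningful or not yet in force, but this is absorbed into constants; the estimate on the telescoping product is the substantive step.
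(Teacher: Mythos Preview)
Your proof is correct and follows essentially the same approach as the paper: convert the hypothesis into a multiplicative bound of the form $f(n)/f(n-1)\leq 1+O(1/n)$, telescope from $n$ to $Kn$, and then iterate with $K=2$ for polynomial boundedness. The only minor difference is that the paper first uses submultiplicativity to rewrite $Cf(n+1)/(n+1)\leq Df(n)/n$ and then applies $1+x\leq e^x$ to get $f(n+i)\leq e^{iD/n}f(n)$ directly, whereas you rearrange to $f(n)\leq \tfrac{n}{n-C}f(n-1)$ and control the telescoping product via an integral comparison; your route incidentally avoids invoking the submultiplicativity hypothesis for this step.
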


\begin{proof}
Suppose that there exists some $C>0$ such that $f'(n)\leq C\frac{f(n)}{n}$ for all $n$. Then 
$$f(n+1)-f(n)=f'(n+1)\leq C\frac{f(n+1)}{n+1} \leq D\frac{f(n)}{n}$$
where $D$ can be taken to be $C\cdot f(1)$. Hence
$$f(n+1) \leq \left(1+\frac{D}{n}\right) \cdot f(n)\leq \exp\left(\frac{D}{n}\right) \cdot f(n)$$
and, by induction,
$$f(n+i)\leq \exp\left(\frac{iD}{n}\right) \cdot f(n)$$ for every $i\geq 0$. In particular, taking $i=(K-1)n$, we obtain
$$f(Kn) \leq \underbrace{\exp((K-1)D)}_{=: L}\cdot f(n),$$
as claimed.

In particular, there exists some $L>0$ such that $f(2n)\leq Lf(n)$ for all $n$, and thus $$f(2^t)\leq L^tf(1)=f(1)\cdot (2^t)^{\log_2 L}.$$  Hence for each $n$, we have $$f(n)\leq f(2^{\lceil \log_2 n\rceil})\leq f(1)\cdot (2^{\lceil \log_2 n\rceil})^{\log_2 L} \leq  f(1)\cdot (2n)^{\log_2 L}$$ so $f$ is polynomially bounded.
\end{proof}

Let $g\colon \mathbb{N}\rightarrow \mathbb{N}$ be a superlinear function, namely, $\frac{g(n)}{n}\xrightarrow{n\rightarrow \infty} \infty$. 
We fix a sequence of integers $$1<d_2<d_3<\cdots$$ such that $d_{i+1}>4d_i$ for all $i\geq 2$ and such that $\omega(n)  := \max\{i\ \colon\ 2d_i\leq n \}$ satisfies $$\omega(n)! < \frac{g(n)}{2(n+1)}$$ (we think of $\max \emptyset$ as $0$ for the purpose of this definition) for $n\gg 1$. This is indeed possible since $\frac{g(n)}{2(n+1)}\xrightarrow{n\rightarrow \infty} \infty$. Furthermore, we may assume that $d_2,d_3,\dots$ are all powers of $2$.

Let $f(1)=2$ and for every $n\geq 2,\ n\neq 2d_2,2d_3,\dots$ let $f(n)=f(n-1)+1$; for $n=2d_i$, put $f(2d_i)=if(d_i)$.

\begin{prop} \label{prop:properties}
    The function $f$ above satisfies (i) $f(n)<f(n+1)$ and (ii) $f(2n)\leq f(n)^2$ for all $n\in \mathbb{N}$, and $f(n) \leq g(n)$ for all $n\gg 1$.
\end{prop}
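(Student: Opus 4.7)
The plan is to verify the three properties in turn, exploiting the simple recursive structure of $f$, which is deterministic outside the jump set $\{2d_2, 2d_3, \dots\}$. Three preparatory observations will be used repeatedly: (a) the induction $f(n) \geq n + 1$ for all $n \geq 1$ (immediate from $f(1)=2$ and the recursion, noting that at a jump point $n=2d_i$ we have $f(2d_i) = i f(d_i) \geq 2(d_i+1) > n$); (b) the bound $d_i \geq i$ (inductively from $d_2 \geq 2$ and $d_{i+1} > 4 d_i$); and (c) the fact that no jump $2d_k$ lies in the open interval $(d_i, 2d_i)$, since $d_{i-1} < d_i/4$ forces $2 d_{i-1} < d_i$ --- consequently $f(2d_i - 1) = f(d_i) + d_i - 1$.

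Property (i) will be handled first: strict monotonicity is trivial outside jump points, and at a jump $n+1 = 2d_i$, observation (c) reduces it to $(i-1) f(d_i) > d_i - 1$, which is immediate from $f(d_i) \geq d_i + 1$.

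The submultiplicativity (ii) will be the main substantive step. The key combinatorial observation is that the spacing hypothesis $d_{i+1} > 4 d_i$ forces at most one jump point $2d_j$ to lie in any doubling interval $(n, 2n]$: two such points would give $d_{j+1}/d_j < 2$, a contradiction. I will then split into two cases. If no jump lies in $(n, 2n]$, then $f(2n) = f(n) + n$, which is $\leq f(n)^2$ by (a). If a unique jump $2d_j \in (n, 2n]$ occurs, then $d_j \leq n$ and $f(2n) = f(n) + n + \Delta_j$, where $\Delta_j = (j-1) f(d_j) - d_j$ is the extra increment at the jump; bounding $\Delta_j \leq (j-1) f(n)$ by monotonicity reduces the desired inequality to $f(n)^2 \geq j f(n) + n$, which follows from $f(n) \geq n + 1 \geq j + 1$ (using $j \leq d_j \leq n$ by (b)) together with $f(n) \geq n+1$. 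I anticipate this case analysis to be the main technical hurdle; everything hinges on the $4d_i$ spacing preventing a second jump from compounding inside a single doubling window.

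Finally, for (iii), I will prove the sharper quantitative bound $f(n) \leq \omega(n)!(n+1)$ by induction on $i = \omega(n)$. In the range $2d_i \leq n < 2 d_{i+1}$, one has $f(n) = i f(d_i) + (n - 2d_i)$, so the inductive hypothesis $f(d_i) \leq (i-1)!(d_i + 1)$ reduces the step to the elementary algebraic inequality $d_i (i! - 2) \leq n (i! - 1)$, which is clear from $n \geq 2 d_i$. The defining property $\omega(n)! < g(n)/(2(n+1))$ for $n \gg 1$ then yields $f(n) \leq \omega(n)!(n+1) < g(n)/2 < g(n)$, as desired.
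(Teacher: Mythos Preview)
Your proof is correct. For part (i) it is essentially the paper's argument. For parts (ii) and (iii), however, you proceed along genuinely different lines.

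For (ii), you organize the argument around the observation that the spacing $d_{i+1}>4d_i$ forces at most one jump in any doubling window $(n,2n]$, and then handle the two cases (no jump / one jump) uniformly via the bound $f(n)\ge n+1$. The paper instead partitions $\mathbb{N}$ into the points $d_i$ and the intervals $[d_i+1,d_{i+1}-1]$, and on each interval compares $f(n)^2\ge(f(d_i)+(n-d_i))^2$ directly with $f(2n)=f(2d_i)+2(n-d_i)$ by expanding the square. Your route makes the role of the spacing hypothesis more transparent; the paper's is more computational but equally short.

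For (iii), you prove the sharper inequality $f(n)\le \omega(n)!\,(n+1)$ by strong induction on $\omega(n)$, whereas the paper obtains $f(n)\le 2(n+1)\,\omega(n)!$ by writing $f(n)$ as a telescoping product $\prod f(k)/f(k-1)$ and bounding each ratio by $(k+1)/k$ at non-jump points and by $i$ at $k=2d_i$. Both conclude via the defining inequality $\omega(n)!<g(n)/(2(n+1))$; your constant is tighter, while the paper's telescoping makes the multiplicative structure of $f$ more visible. One minor quibble: your justification of observation~(c) as stated only covers jumps $2d_k$ with $k<i$; you should note separately that $2d_k\ge 2d_i$ for $k\ge i$, so these also lie outside $(d_i,2d_i)$.
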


\begin{proof}
First, we prove by induction that $f'(n)\geq 1$ for all $n\geq 2$. If $n\geq 2,\ n\neq 2d_i$ then $f'(n)=1$ by definition. For $n=2d_i$, notice that \begin{eqnarray*} f'(2d_i) & = & if(d_i)-f(2d_i-1) \\ & = & if(d_i)-(f(d_i)+d_i-1) \\ & = & (i-1)f(d_i)-(d_i-1)  \geq  f(d_i)-(d_i-1) \geq 1 \end{eqnarray*} since by induction we may assume that $f'(m)\geq 1$ for all $m<2d_i$.

We now prove by induction that $f(2n)\leq f(n)^2$ for all $n$. If $n\in [1,d_2-1]$ then $$f(2n)=2n+1\leq (n+1)^2=f(n)^2.$$ If $n=d_i$ then $f(2d_i)=if(d_i)\leq d_if(d_i)\leq f(d_i)^2$. Now suppose that there exists some $i\geq 2$ such that $n\in [d_i+1,d_{i+1}-1]$. Then 
\begin{eqnarray*} 
f(n)^2 & = & \left( f(d_i) + (n - d_i) \right)^2 \\ & = & f(d_i)^2+2(n-d_i)+(n-d_i)^2 \\ & \geq & d_i f(d_i) + 2(n-d_i) \\ & \geq & i f(d_i)+2(n-d_i) = f(2d_i)+2n-2d_i = f(2n). \end{eqnarray*}

Finally, notice that if $n\neq 2d_i$ for all $i$ then $$\frac{f(n)}{f(n-1)}=1+\frac{1}{f(n-1)}\leq 1+\frac{1}{n}=\frac{n+1}{n},$$ and if $n=2d_i$ for some $i$ then $$\frac{f(n)}{f(n-1)}\leq \frac{f(2d_i)}{f(d_i)} = i.$$
Therefore, for $n\gg 1$,
\begin{eqnarray*}
    f(n) & = & f(1)\cdot \frac{f(2)}{f(1)} \cdots \frac{f(n)}{f(n-1)} \\ & \leq & 2 \cdot \left( \prod_{k=2}^{n} \frac{f(k)}{f(k-1)} \right) \cdot \left( \prod_{i:\ 2d_i\leq n} \frac{f(2d_i)}{f(2d_i-1)} \right) \\ & \leq & 2 \cdot \left( \prod_{k=2}^{n} \frac{k+1}{k} \right) \cdot\left( \prod_{i\leq \omega(n)} i \right) \leq 2(n+1)\cdot \omega(n)! < g(n),
\end{eqnarray*}
as desired.
\end{proof}

\begin{proof}[{Proof of Theorem \ref{thm:Cassaigne}}]
Let $f\colon \mathbb{N}\rightarrow \mathbb{N}$ be as above. By Proposition \ref{prop:properties} and \cite[Theorem~1.1]{GMZ}, $f$ is asymptotically equivalent to the complexity function of some recurrent infinite word $w$, namely, there exist some constants $C_1,C_2,D_1,D_2>0$ such that
$$f(n)\leq C_1p_w(D_1n)\ \text{and}\ p_w(n)\leq C_2f(D_2n)$$
for all $n \gg 1$. Since $f,p_w$ are non-decreasing, we may enlarge $D_2$ and assume that it is a power of $2$. We claim that $p_w'(n)\neq O\left(\frac{p_w(n)}{n}\right)$. Assume otherwise. By Lemma \ref{lem:calculus}, there exists some $L$ such that $p_w(2 D_1 D_2 n)\leq Lp_w(n)$ for all $n$. Now $$f(2D_2 n)\leq C_1p_w(2D_1D_2 n) \leq L C_1 p_w(n) \leq L C_1 C_2 f(D_2 n).$$
Pick $i\gg 1$ such that $i > L C_1 C_2$ and $d_i > D_2$. Take $n=\frac{d_i}{D_2}$ -- an integer, since $d_i>D_2$ are both powers of $2$ -- and observe that
$$f(2d_i) = f(2D_2 n) \leq L C_1 C_2 f(d_i) < i f(d_i),$$
contradicting the way $f(x)$ is defined for $x=2d_i$. The proof is completed.
\end{proof}

\section{Discrete derivatives of polynomially bounded complexity functions}

We start by describing the construction, which falls into Case I of our general construction from \cite{GMZ}. This is not surprising, since the main result of \cite{GMZ} realizes \emph{any} increasing submultiplicative function as the complexity functions of an infinite word, up to asymptotic equivalence. However, the proof of Theorem \ref{thm:Cassaigne2} cannot be simply derived as a corollary from \cite{GMZ}, since discrete derivatives are very `badly behaved' with respect to asymptotic equivalence. Instead, we must delve into a much more precise construction and thoroughly analyze its complexity function, and its discrete derivative.

\subsection{Construction}
We work over the alphabet $\Sigma = \{0,1,2\}$. We denote the set of all finite words over $\Sigma$ by $\Sigma^*$. Fix some integer $r \geq 2$, to be determined in the sequel. Inductively define the following data:

\begin{align} \label{eq:construction}
 & \{n_k\}_{k=1}^{\infty}, \ \{s_k\}_{k=1}^{\infty}, \ \text{sequences of positive integers} \\ & X_k\subseteq \Sigma ^{n_k} \ \text{such that} \ |X_k|=s_k,  \nonumber \\ & \{k_l\}_{l=1}^{\infty}, \ \text{an increasing sequence of positive integers} \nonumber \\ & \ \ \ \ \ \ \ \ \ \ \ \ \text{such that} \ s_{k_l}=n_{k_l}^\alpha\ \text{where}\ \alpha=\frac{\log 4}{\log 3}.  \nonumber
\end{align}
First, $n_k=3^{k-1}$ for all $k\geq 1$. We let $s_1=2$ and $X_1=\{1,2\}$; and $$s_2=4,\ X_2=X_1 0 X_1=\{101,102,201,202\}.$$ We let
$k_1=2$, and observe that $s_{k_1}=s_2=4=3^{\alpha} =n_2^{\alpha}=n_{k_1}^{\alpha}$.

Given $l\geq 1$ such that $s_{k_l}=n_{k_l}^{\alpha}$, we define $k_{l+1}:=(k_l-1)2^r+1$. We observe that, since $r\geq 2$, then $2^r\geq r+2$ and so $$k_l \cdot (2^r - 1)\geq 2^{r+1}-2\geq 2^r+r,$$ hence $$k_{l+1} = (k_l - 1) 2^r+1\geq k_l+r+1.$$ We then define the rest of data (\ref{eq:construction}) as follows:
\begin{itemize}
    \item For $0\leq j\leq r-1$, let $$s_{k_l+j+1}=s_{k_l+j}^2\ \ \text{and}\ \  X_{k_l+j+1}=X_{k_l+j} 0^{n_{k_l+j}} X_{k_l+j}.$$ Observe that $$|X_{k_l+j+1}| = |X_{k_l+j}|^2=s_{k_l+j}^2=s_{k_l+j+1}$$ and $$s_{k_l+r} = s_{k_l}^{2^r} = n_{k_l}^{\alpha 2^r} = 3^{(k_l-1)\cdot \alpha 2^r}.$$
    \item For $k_l+r+1\leq m\leq k_{l+1}$, let $s_m = s_{k_l+r}$. Enumerate $$X_{m-1}=\{\alpha_1,\dots,\alpha_{s_{k_l+r}}\}$$ and let $$X_m=\{\alpha_i 0^{n_{m-1}}\alpha_{i+1 \mod s_{k_l+r}}\ |\ i=1,\dots,s_{k_l+r}\}$$
    where we interpret $x\mod s_{k_l+r}\in [1,s_{k_l+r}]$. Observe that $$|X_m|=s_{k_l+r}=s_m$$ and $$s_{k_{l+1}} = s_{k_l+r} = 3^{(k_l-1)\cdot \alpha 2^r} = \left(3^{(k_l-1)2^r}\right)^\alpha = \left(3^{k_{l+1} - 1}\right)^\alpha = n_{k_{l+1}}^{\alpha}.$$
\end{itemize}
This completes the inductive definition of data (\ref{eq:construction}).

\begin{lem} \label{lem:11}
\begin{enumerate} 
    \item For every $k\geq 1$, we have $$X_k\subseteq 1\Sigma^*1\cup 1\Sigma^*2\cup 2\Sigma^*1\cup 2\Sigma^*2.$$ 
    \item Let $k < k'$. For every $\alpha\in X_k$ there exist some $\beta,\gamma\in X_{k'}$ such that $\alpha 0^{n_k}$ is a prefix of $\beta$ and $0^{n_k}\alpha$ is a suffix of $\gamma$.
    \item If $\alpha$ is a prefix (resp., suffix) of a word from $X_k$ for some $k$, and $|\alpha|\leq n_d = 3^{d-1}$ then $\alpha$ is a prefix (resp., suffix) of a word from $X_{d}$.
\end{enumerate}
\end{lem}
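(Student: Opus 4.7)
The plan is to exploit a single uniform structural feature of the construction: for every $k\geq 2$, every word in $X_k$ decomposes as $\alpha\, 0^{n_{k-1}}\, \beta$ with $\alpha,\beta \in X_{k-1}$, regardless of whether the $k$-th step was a \emph{doubling} $X_k = X_{k-1} 0^{n_{k-1}} X_{k-1}$ or a cyclic \emph{rotation} $X_k = \{\alpha_i\, 0^{n_{k-1}}\, \alpha_{i+1\mod s}\}$ (with $s = s_{k_l+r}$ for the relevant $l$). Once this is in hand, part (1) drops out by induction on $k$ from the base $X_1 = \{1,2\}$: the first letter of any $\gamma = \alpha\, 0^{n_{k-1}}\, \beta \in X_k$ equals the first letter of $\alpha$, the last letter equals the last letter of $\beta$, and both lie in $\{1,2\}$ by the inductive hypothesis.

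For part (2), I would first dispose of the one-step case $k' = k+1$. In a doubling step, for any $\alpha \in X_k$ and any $\alpha' \in X_k$, both $\alpha\, 0^{n_k}\, \alpha'$ and $\alpha'\, 0^{n_k}\, \alpha$ lie in $X_{k+1}$. In a rotation step, writing $\alpha = \alpha_i$ in the enumeration of $X_k$, the words $\alpha_i\, 0^{n_k}\, \alpha_{i+1\mod s}$ and $\alpha_{i-1\mod s}\, 0^{n_k}\, \alpha_i$ lie in $X_{k+1}$ by construction, realizing $\alpha\, 0^{n_k}$ as a prefix and $0^{n_k}\, \alpha$ as a suffix respectively. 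The general statement follows by induction on $k'-k$: given $\beta \in X_{k'-1}$ with $\alpha\, 0^{n_k}$ as prefix, the one-step case applied to $\beta$ produces an element of $X_{k'}$ with $\beta$, hence $\alpha\, 0^{n_k}$, as prefix; the suffix direction is symmetric.

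Part (3) reduces to (2) combined with a downward induction along the uniform decomposition. If $k \leq d$, apply part (2) to a word of $X_k$ containing $\alpha$ as prefix to obtain a word of $X_d$ extending it, which therefore still has $\alpha$ as prefix. If $k > d$, write $\gamma = \alpha'\, 0^{n_{k-1}}\, \beta'$ with $\alpha', \beta' \in X_{k-1}$; since $|\alpha| \leq n_d \leq n_{k-1}$ in this range, $\alpha$ is already a prefix of $\alpha' \in X_{k-1}$, and iterating this descent $k-d$ times lands in $X_d$. The suffix variant is identical. I expect the only delicate point in the entire argument to be establishing the uniform decomposition across the rotation steps, where the bookkeeping of the enumeration $\{\alpha_1,\dots,\alpha_s\}$ and the cyclic index $i+1\mod s$ must be tracked carefully; once that is verified, everything reduces to routine inductions on $k$ or on $k'-k$.
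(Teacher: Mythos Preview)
Your proposal is correct and follows essentially the same route as the paper. The paper likewise reduces everything to the uniform containment $X_{k+1}\subseteq X_k 0^{n_k} X_k$, handles part~(2) by the one-step case $k'=k+1$ (splitting into doubling versus rotation exactly as you do), and for part~(3) argues by taking the minimal $k$ rather than by an explicit downward descent, which is the same argument in contrapositive form.
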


\begin{proof}
\begin{enumerate}
    \item This follows by induction, since $X_1=\{1,2\}$ and $X_{k+1}\subseteq X_k 0^{n_k} X_k$ for all $k\geq 1$.
    \item It suffices to prove the claim in case that $k'=k+1$. By construction, either $X_{k+1}=X_k 0^{n_k} X_k$, in which case we can take $\beta=\gamma=\alpha0^{n_k}\alpha$; or else, we enumerate $X_k=\{\alpha_1,\dots,\alpha_{s_k}\}$ and let $$X_{k+1}=\{\alpha_i 0^{n_k}\alpha_{i+1 \mod s_k}\ |\ i=1,\dots,s_k\}$$
    where we interpret $x \mod s_k \in [1,s_k]$. Thus there exists some $i\in \{1,\dots,s_k\}$ such that $\alpha_i = \alpha$. Take $\beta = \alpha_i 0^{n_k} \alpha_{i+1 \mod s_k}$ and $\gamma = \alpha_{i-1 \mod s_k} 0^{n_k} \alpha_i$.
    \item We may assume that $k$ is the minimum index for which $\alpha$ is a prefix (respectively, suffix) of some word from $X_k$. If $k\leq d$ then the claim follows by (2), so suppose that $k\geq d+1$. 
    Notice that by construction, since $k\geq 2$, $$X_{k} \subseteq \underbrace{X_{k-1}}_{3^{k-2}} \ \underbrace{0^{n_{k-1}}}_{3^{k-2}} \ \underbrace{X_{k-1}}_{3^{k-2}},$$ so if $\alpha$ is a prefix (resp., suffix) of some word from $X_{k}$ and $|\alpha| \leq 3^{d-1} \leq 3^{k-2}$ then $\alpha$ is a prefix (resp., suffix) of a word from $X_{k-1}$, contradicting the minimality of $k$. Hence $k\leq d$ and we are done.
\end{enumerate}
\end{proof}

We next claim that the hereditary closure of $\bigcup_{k=1}^{\infty} X_k$ --- that is, the set of all finite factors of any word from $\bigcup_{k=1}^{\infty} X_k$ --- coincides with the set of all finite factors of some infinite recurrent word $w\in \{0,1,2\}^{\mathbb{N}}$. The proof is standard, but we give it here for completeness.

\begin{prop} \label{prop:infinite word}
There exists an infinite recurrent word $w\in \{0,1,2\}^{\mathbb{N}}$ such that the finite factors of $w$ are precisely the finite factors of $\bigcup_{k=1}^{\infty} X_k$.
\end{prop}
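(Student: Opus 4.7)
The natural plan is to identify the hereditary (factorial) language $\mathcal{L}$ consisting of all finite factors of words in $\bigcup_{k=1}^{\infty}X_k$, and to exhibit a single recurrent $w\in\Sigma^{\N}$ whose set of finite factors is exactly $\mathcal{L}$. The crucial combinatorial input will be a \emph{joining property}: for every $u_1,u_2\in\mathcal{L}$ there exist $v\in\Sigma^{*}$ and $k$ such that $u_1vu_2$ is a factor of some element of $X_k$.

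First I would establish this joining property. By Lemma \ref{lem:11}(2), iterated through the $0^{n_k}$-paddings in the recursion, every $\alpha\in X_j$ occurs as a factor of some element of $X_k$ for each $k\geq j$; so given $u_1,u_2\in\mathcal{L}$ one can pick a common level $k$ at which both $u_1$ and $u_2$ are factors of some $\alpha_1,\alpha_2\in X_k$. Choosing $k$ large enough that the next level is a doubling step $X_{k+1}=X_k\,0^{n_k}\,X_k$---which happens for infinitely many $k$, namely throughout the blocks $k_l\leq k<k_l+r$---the word $\alpha_1\,0^{n_k}\,\alpha_2$ lies in $X_{k+1}$, and taking $v$ to be the suffix of $\alpha_1$ after the chosen occurrence of $u_1$, followed by $0^{n_k}$, followed by the prefix of $\alpha_2$ before the chosen occurrence of $u_2$, exhibits $u_1vu_2$ as a factor of an element of $X_{k+1}$, hence in $\mathcal{L}$.

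Given the joining property, I would construct $w$ by a standard iterative concatenation. Enumerate $\mathcal{L}=\{u_1,u_2,\dots\}$ with every element of $\mathcal{L}$ listed infinitely often; set $w^{(1)}:=u_1\in\mathcal{L}$, and inductively, assuming $w^{(i)}\in\mathcal{L}$, apply the joining property to the pair $(w^{(i)},u_{i+1})$ to produce $v_i$ with $w^{(i+1)}:=w^{(i)}v_iu_{i+1}\in\mathcal{L}$. Since each $w^{(i+1)}$ strictly extends $w^{(i)}$ as a prefix, the limit $w:=\lim_{i\to\infty}w^{(i)}\in\Sigma^{\N}$ is a well-defined infinite word. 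Every finite prefix of $w$ lies in $\mathcal{L}$, hence every finite factor of $w$ lies in $\mathcal{L}$; conversely, by the enumeration with repetitions, every $u\in\mathcal{L}$ occurs at infinitely many positions in $w$, giving simultaneously the equality of factor sets and the recurrence of $w$.

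I expect the main obstacle to be the joining property itself: one has to combine the iterative embedding from Lemma \ref{lem:11}(2) with the explicit presence of the doubling layers $X_{k+1}=X_k\,0^{n_k}\,X_k$, since the purely `twisted' levels $X_m=\{\alpha_i\,0^{n_{m-1}}\,\alpha_{i+1\bmod s}\}$ only produce cyclically adjacent pairs and would not by themselves suffice to freely concatenate two arbitrary factors.
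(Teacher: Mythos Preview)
Your proposal is correct and follows essentially the same route as the paper: both arguments isolate the joining property at a doubling level $X_{k+1}=X_k\,0^{n_k}\,X_k$ (the paper picks $k=k_l$) and then build $w$ as a limit of an increasing chain of prefixes that successively absorb an enumeration. The only cosmetic difference is that the paper enumerates $\bigcup_k X_k$ once and deduces recurrence from the fact that each factor of $w$ appears in infinitely many $X_k$-words, whereas you enumerate $\mathcal{L}$ with infinite repetitions and read off recurrence directly; both variants work.
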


\begin{proof}
Notice that, for every $k\leq k'$, every word from $X_k$ is the prefix of some word from $X_{k'}$ (Lemma \ref{lem:11}). Let $\alpha \in X_i,\beta\in X_j$. Fix some $l\gg 1$ such that $k_l \geq i,j$. Then $\alpha,\beta$ are both prefixes of some $\alpha\alpha',\beta\beta'\in X_{k_l}$. By construction, $X_{k_{l}+1} = X_{k_l} 0^{n_{k_l}} X_{k_l}$, so $\alpha\alpha'0^{n_{k_l}}\beta\beta'\in X_{k_l+1}$, and therefore, there exist some $\gamma,\delta$ such that $\alpha \gamma \beta \delta \in X_{k_l+1}$.

Enumerate (in an arbitrary order) all words in $\bigcup_{k=1}^{\infty} X_k$:
$$u_1,u_2,\dots$$
and notice that by the above argument, there exist some $\gamma_1,\delta_1$ such that $v_2:=u_1\gamma_1 u_2 \delta_1$ belongs to this list. For each $i\geq 2$, suppose that $v_i$ has been defined and appears in the above list. By the above argument, there exist some $\gamma_i,\delta_i$ such that $v_{i+1} := v_i \gamma_i u_{i+1} \delta_i$ belongs to the above list. 
Now $$ w := \lim_{i\rightarrow \infty} v_i = u_1 \ \gamma_1 \ u_2 \ \delta_1 \ \gamma_2 \ u_3 \ \delta_2 \ \gamma_3 \ u_4 \ \delta_3 \ \gamma_4 \ u_5 \ \delta_5 \cdots  $$
is an infinite word as desired. Indeed, each word in $\bigcup_{k=1}^{\infty} X_k$ factors $w$, and every finite factor of $w$ factors some $v_i$, and $v_i\in \bigcup_{k=1}^{\infty} X_k$. Moreover, since every finite factor of any word from $\bigcup_{k=1}^{\infty} X_k$ factors infinitely many members from $\bigcup_{k=1}^{\infty} X_k$, the word $w$ is recurrent.
\end{proof}

\begin{lem} \label{lem:use}
If $\alpha$ is a finite factor of $w$ and $|\alpha|\leq n_d=3^{d-1}$ then $\alpha$ factors some word from $X_{d+1}$.
\end{lem}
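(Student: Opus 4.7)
The plan is to induct on the least $k \geq 1$ for which $\alpha$ factors some word in $X_k$; such $k$ exists by Proposition \ref{prop:infinite word}, which identifies the factors of $w$ with those of $\bigcup_k X_k$. The base case $k \leq d+1$ is immediate: if $k = d+1$ there is nothing to prove, and if $k \leq d$ then Lemma \ref{lem:11}(2) applied with $k' = d+1$ furnishes a word in $X_{d+1}$ having the containing word $\beta \in X_k$ as a prefix, so $\alpha$ itself factors that word.

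For the inductive step we assume $k \geq d+2$ and write the containing word $\beta \in X_k$ as $\beta = \beta_1\, 0^{n_{k-1}}\, \beta_2$ with $\beta_1, \beta_2 \in X_{k-1}$. This is possible because both recipes defining $X_k$ (the doubling $X_{k_l+j+1} = X_{k_l+j}\, 0^{n_{k_l+j}}\, X_{k_l+j}$ for $j \in [0, r-1]$, and the cyclic pairing used for $k \in [k_l+r+1, k_{l+1}]$) yield the containment $X_k \subseteq X_{k-1}\, 0^{n_{k-1}}\, X_{k-1}$. Since $k-1 \geq d+1$, we have $n_{k-1} = 3^{k-2} \geq 3^d > 3^{d-1} \geq |\alpha|$, so no occurrence of $\alpha$ in $\beta$ can meet both $\beta_1$ and $\beta_2$.

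Thus the occurrence of $\alpha$ in $\beta$ lies either within $\beta_1\, 0^{n_{k-1}}$ or within $0^{n_{k-1}}\, \beta_2$, the two possibilities being symmetric. In the former, either $\alpha$ sits entirely inside $\beta_1 \in X_{k-1}$ and the induction hypothesis applies directly; or $\alpha = 0^{|\alpha|}$ lies inside the zero block, in which case $\alpha$ factors any word in $X_{d+1}$ because $X_{d+1} \subseteq X_d\, 0^{n_d}\, X_d$ forces every word of $X_{d+1}$ to contain $0^{n_d}$; or else $\alpha$ straddles the boundary and we may write $\alpha = \alpha' 0^j$ with $\alpha'$ a suffix of $\beta_1 \in X_{k-1}$ and $|\alpha'| + j = |\alpha| \leq n_d$. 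In the straddling case, Lemma \ref{lem:11}(3) upgrades $\alpha'$ to a suffix of some $\gamma \in X_d$, and Lemma \ref{lem:11}(2) then supplies $\delta \in X_{d+1}$ with $\gamma\, 0^{n_d}$ a prefix of $\delta$; since $j \leq n_d$, $\alpha = \alpha' 0^j$ factors $\gamma\, 0^{n_d}$ and hence $\delta$.

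The only real subtlety is the bookkeeping around the length bound: the single inequality $|\alpha| \leq 3^{d-1} < 3^{k-2}$ must simultaneously rule out occurrences of $\alpha$ spanning all three parts of $\beta_1\, 0^{n_{k-1}}\, \beta_2$ and force the trailing run of zeros in the straddling case to fit inside the $0^{n_d}$-padding produced by Lemma \ref{lem:11}(2). Both uses are comfortably covered by the gap between $n_d$ and $n_{k-1}$ that the assumption $k \geq d+2$ affords.
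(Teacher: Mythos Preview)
Your argument is correct and follows essentially the same route as the paper: both take the smallest index $k$ (the paper calls it $p$) for which $\alpha$ factors a word in $X_k$, use $X_k \subseteq X_{k-1}0^{n_{k-1}}X_{k-1}$ together with $|\alpha| \leq n_{k-1}$ to localize $\alpha$ to one side of the middle zero block, and then handle the straddling case via Lemma~\ref{lem:11}. The only substantive difference is that in the straddling case you invoke Lemma~\ref{lem:11}(3) to jump straight to a suffix in $X_d$ and then land in $X_{d+1}$, whereas the paper drops only to $X_{p-2}$, climbs back to $X_{p-1}$, and records a contradiction with the minimality of $p$; both are equally valid. One wording quibble: you open by saying you ``induct on the least $k$'' but then, when $\alpha$ sits inside $\beta_1\in X_{k-1}$, you appeal to an ``induction hypothesis'' --- under the minimal-$k$ framing that case is simply impossible, so either phrase it as a minimality/descent argument or as a strong induction on an arbitrary $k$, but not both.
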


\begin{proof}
If $\alpha$ factors $w$ then by Proposition \ref{prop:infinite word}, $\alpha$ factors some word $\beta\in X_p$ for some $p\in \mathbb{N}$. Further assume that $p$ is smallest possible. If $p\leq d+1$, we are done, since every word from $X_i$ factors a word from $X_{i+1}$ for all $i$. Assume toward contradiction that $p\geq d+2$ (hence $p\geq 3$). Now $\alpha$ factors $$\beta\in X_p \subseteq X_{p-1}0^{n_{p-1}} X_{p-1},$$ and since $|\alpha|\leq 3^{d-1}\leq 3^{p-2}=n_{p-1}$, it follows that $\alpha$ factors some word from $X_{p-1}0^{n_{p-1}} \cup 0^{n_{p-1}} X_{p-1}$.

\medskip

\noindent \emph{Case I}: $\alpha$ factors a word from $X_{p-1}0^{n_{p-1}}$, and since $\alpha$ does not factor any word from $X_{p-1}$, it must take the form $\alpha = \alpha' 0 ^i$ for some $\alpha'$ a suffix of a word from $X_{p-1}$ and $0 \leq i\leq |\alpha| \leq 3^{d-1}\leq 3^{p-3} =  n_{p-2}$. Since $$X_{p-1} \subseteq \underbrace{X_{p-2}}_{3^{p-3}} \underbrace{0^{n_{p-2}}}_{3^{p-3}} \underbrace{X_{p-2}}_{3^{p-3}}$$
and $p-3\geq d-1$, it follows that $\alpha'$ --- whose length is at most $|\alpha|\leq 3^{d-1}$ --- is in fact a suffix of some word $\gamma$ from $X_{p-2}$. By Lemma \ref{lem:11}, we know that $\gamma  0^{n_{p-2}}$ is a prefix of a word from $X_{p-1}$, and it follows that $\alpha = \alpha' 0^i$ factors $\gamma 0^{n_{p-2}}$, which is a factor of a word from $X_{p-1}$, contradicting the minimality of $p$.

\medskip

\noindent \emph{Case II}: $\alpha$ factors a word from $0^{n_{p-1}}X_{p-1}$, and thus takes the form $\alpha = 0 ^i \alpha' $ for some $\alpha'$ a prefix of a word from $X_{p-1}$ and $0 \leq i\leq |\alpha| \leq 3^{d-1}\leq 3^{p-3} =  n_{p-2}$. Since $$X_{p-1} \subseteq \underbrace{X_{p-2}}_{3^{p-3}} \underbrace{0^{n_{p-2}}}_{3^{p-3}} \underbrace{X_{p-2}}_{3^{p-3}}$$
and $p-3\geq d-1$, it follows that $\alpha'$ --- whose length is at most $|\alpha|\leq 3^{d-1}$ --- is in fact a prefix of some word $\gamma$ from $X_{p-2}$. By Lemma \ref{lem:11} we know that $0^{n_{p-2}} \gamma$ is a suffix of a word from $X_{p-1}$, and it follows that $\alpha = 0^i \alpha' $ factors $0^{n_{p-2}} \gamma$, which is a subword of a word from $X_{p-1}$, contradicting the minimality of $p$.
\end{proof}

\subsection{Complexity bounds}

We start by showing that $p_w(n)$ is polynomially bounded.

\begin{lem} \label{lem:complexity upper}
We have $p_w(n_k)\leq 4 n_k s_k$ for all $k \geq 1$.
\end{lem}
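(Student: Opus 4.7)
The plan is to invoke Lemma \ref{lem:use} to reduce counting length-$n_k$ factors of $w$ to counting those appearing inside elements of $X_{k+1}$, and then to exploit the block structure of $X_{k+1}$: in both branches of the construction (squaring and enumeration), one has $X_{k+1} \subseteq X_k\,0^{n_k}\,X_k$, and an element of $X_{k+1}$ therefore has the form $v = \alpha\,0^{n_k}\,\beta$ with $\alpha,\beta \in X_k$ and $|v| = 3n_k$.

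By Lemma \ref{lem:use} applied with $d = k$, every length-$n_k$ factor of $w$ factors some element of $X_{k+1}$. I would then classify the $2n_k + 1$ length-$n_k$ factors of a fixed $v = \alpha\,0^{n_k}\,\beta$ by starting position $i \in [1, 2n_k+1]$ into three types. For $i \in [1, n_k]$, the factor equals the length-$(n_k-i+1)$ suffix of $\alpha$ followed by $i-1$ zeros, so it is a function of $\alpha$ alone, contributing at most $n_k$ distinct factors per $\alpha \in X_k$. For $i = n_k+1$, the factor is $0^{n_k}$, independent of $\alpha,\beta$. For $i \in [n_k+2, 2n_k+1]$, the factor is $2n_k+1-i$ zeros followed by the length-$(i-n_k-1)$ prefix of $\beta$, a function of $\beta$ alone, contributing at most $n_k$ distinct factors per $\beta \in X_k$. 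Summing, the total number of distinct length-$n_k$ factors appearing across all of $X_{k+1}$ is at most $n_k s_k + 1 + n_k s_k = 2n_k s_k + 1 \leq 4 n_k s_k$, which bounds $p_w(n_k)$.

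The only conceptual point that needs care is that the naive estimate $p_w(n_k) \leq |X_{k+1}|\cdot (2n_k+1)$ would be too weak, because in the squaring branch of the construction $|X_{k+1}|$ is as large as $s_k^2$. The saving comes from the fact that a length-$n_k$ factor of $\alpha\,0^{n_k}\,\beta$ cannot overlap both $\alpha$ and $\beta$ simultaneously: each such factor is determined by position together with at most one of the two endpoint words, so the contribution is linear, not quadratic, in $s_k$. For the base case $k=1$, the bound $4n_1 s_1 = 8$ is trivially satisfied since $p_w(1) \leq |\Sigma| = 3$.
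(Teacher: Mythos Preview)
Your proof is correct and follows essentially the same approach as the paper's own proof: both invoke Lemma \ref{lem:use} to reduce to factors of words in $X_{k+1}\subseteq X_k 0^{n_k} X_k$, then observe that a length-$n_k$ factor of $\alpha\,0^{n_k}\,\beta$ depends only on a suffix of $\alpha$ or on a prefix of $\beta$ (never both), so the count is at most $O(n_k s_k)$ rather than $O(n_k s_k^2)$. Your bookkeeping gives $2n_k s_k+1$ while the paper arrives at $2(n_k+1)s_k$, but these are the same estimate and both sit below $4n_k s_k$; the separate base case $k=1$ you added is harmless but unnecessary, since the general argument already covers it.
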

\begin{proof}
Fix any $k \geq 1$. By Lemma \ref{lem:use}, every factor $\alpha$ of $w$ of length $n_k = 3^{k-1}$ factors some word from $$X_{k+1}\subseteq \underbrace{X_k}_{3^{k-1}} \underbrace{0^{n_k}}_{3^{k-1}} \underbrace{X_k}_{3^{k-1}}$$
and since $|\alpha|=n_k=3^{k-1}$ then $\alpha$ must factor some word from $X_k 0^{n_k} \cup 0^{n_k} X_k$, and hence takes the form $u0^i$ (or $0^iu$) for $u$ a prefix (or a suffix, respectively) of a word from $X_k$, and $0\leq i\leq 3^{k-1}$ (notice that if $\alpha$ factors some word from $X_k$ then $\alpha$ itself is a word from $X_k$ as its length is $n_k$). The number of prefixes (resp., suffixes) of any given length of words from $X_k$ is bounded by $|X_k|=s_k$. Hence the number of words $\alpha$ as above is at most $$\underbrace{2}_{\substack{\text{prefix} \\ \text{or suffix}}}\underbrace{(3^{k-1}+1)}_{\substack{\text{options} \\ \text{for}\ i}} \underbrace{s_k}_{\substack{|X_k| \\ \text{number of} \\ \text{prefixes/suffixes} \\ \text{of length}\ 3^{k-1}-i}}\leq 4n_k s_k,$$
as claimed.
\end{proof}

\begin{cor}
We have $p_w(n) = O ( n^{\alpha 2^r+1} )$.
\end{cor}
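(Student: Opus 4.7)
The plan is to combine Lemma \ref{lem:complexity upper} with a uniform upper bound on $s_k$ in terms of $n_k$, and then interpolate to all $n$ by monotonicity.

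First I would show that $s_k \leq n_k^{\alpha 2^r}$ for every $k \geq 1$. By construction, the sequence $\{s_k\}$ is non-decreasing and piecewise-defined on the intervals $[k_l,k_{l+1}]$. On each such interval: for $k = k_l + j$ with $0 \leq j \leq r$, one has $s_k = s_{k_l}^{2^j} = n_{k_l}^{\alpha 2^j}$; for $k_l + r + 1 \leq k \leq k_{l+1}$, one has $s_k = s_{k_l + r} = n_{k_l}^{\alpha 2^r}$. In all cases $s_k \leq n_{k_l}^{\alpha 2^r} \leq n_k^{\alpha 2^r}$, since $n_{k_l} \leq n_k$.

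Substituting this into Lemma \ref{lem:complexity upper} gives
$$ p_w(n_k) \leq 4 n_k s_k \leq 4 n_k^{1 + \alpha 2^r} $$
for all $k \geq 1$. For an arbitrary $n \geq 1$, pick $k \geq 1$ with $n_k \leq n < n_{k+1} = 3 n_k$. Since $p_w$ is non-decreasing,
$$ p_w(n) \leq p_w(n_{k+1}) \leq 4 n_{k+1}^{1 + \alpha 2^r} = 4 (3 n_k)^{1 + \alpha 2^r} \leq 4 \cdot 3^{1 + \alpha 2^r} \cdot n^{1 + \alpha 2^r}, $$
so $p_w(n) = O(n^{\alpha 2^r + 1})$, as claimed.

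The only real step to verify is the uniform bound $s_k \leq n_k^{\alpha 2^r}$, which is immediate from the inductive construction (since $s_k$ is frozen at $n_{k_l}^{\alpha 2^r}$ throughout the second half of each interval $[k_l, k_{l+1}]$, and grows monotonically up to that value in the first half). Everything else is routine monotonicity and the fact that consecutive values of $n_k$ differ by a constant factor $3$.
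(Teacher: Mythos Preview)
Your proof is correct and follows essentially the same route as the paper's: bound $s_k \leq n_k^{\alpha 2^r}$ from the inductive construction, apply Lemma~\ref{lem:complexity upper}, and interpolate via monotonicity using $n_{k+1}=3n_k$. The only nitpick is that the bound $s_k \leq n_k^{\alpha 2^r}$ fails at $k=1$ (since $s_1=2>1=n_1^{\alpha 2^r}$) and your case analysis only covers $k\geq k_1=2$; this is harmless because your interpolation step only invokes $s_{k+1}$ with $k+1\geq 2$.
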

\begin{proof}
We first observe that by construction, $s_{k_l}=n_{k_l}^\alpha$ for every $l$. Let $m\geq 2 = k_1$ be arbitrary and let $l\in \mathbb{N}$ be such that $m\in [k_l,k_{l+1}-1]$. By construction, we have $$s_m\leq s_{k_l}^{2^r}\leq n_{k_l}^{\alpha 2^r} \leq n_m^{\alpha 2^r}.$$
Fix any $n \in \mathbb{N}$ and let $m\geq 1$ be such that $n_m \leq n <  n_{m+1}$. By Lemma \ref{lem:complexity upper}, 
\begin{eqnarray*}
p_w(n) & \leq & p_w(n_{m+1}) \\ & \leq & 4 n_{m+1} s_{m+1} \\ & = & 4 n_{m+1} \cdot n_{m+1}^{\alpha 2^r} \\ & = & 4n_{m+1}^{\alpha 2^r+1} \\ & = & 4\cdot 3^{\alpha 2^r + 1} n_m^{\alpha 2^r + 1} \\ & \leq & 4\cdot 3^{\alpha 2^r + 1} n^{\alpha 2^r + 1},
\end{eqnarray*}
as claimed.
\end{proof}

To prove Theorem \ref{thm:Cassaigne2}, we need the following estimates on certain values of the complexity function of $w$. 
Given $l\geq 1$, denote $t_l:=n_{k_l+r}=3^{k_l+r-1}$ (if clear from the context what $l$ is, we omit the subscript of $t_l$).

\begin{lem} \label{lem:p(n+3t)}
There exists a positive constant $K>0$ such that for every $l\in \mathbb{N}$ we have $$ts_{k_{l+1}}^2 + p_w(n_{k_{l+1}}) \leq p_w(n_{k_{l+1}}+3t) \leq K ts_{k_{l+1}}^2.$$
\end{lem}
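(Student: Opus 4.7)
Set $N := n_{k_{l+1}}$ and $T := 3t = 3 n_{k_l + r}$; a direct comparison of exponents (using $r \geq 2$ and $k_l \geq 2$) shows $T \leq N$. The plan for the lower bound is to exhibit $\Omega(t s_{k_{l+1}}^2)$ factors of length $N+T$ of the form $u\,0^N\,v$, exploiting that $X_{k_{l+1}+1} = X_{k_{l+1}}\,0^N\,X_{k_{l+1}}$ is a full Cartesian product, and then to combine this count with $p_w(N)$ via a restriction-to-prefix argument. For the upper bound, I would appeal to Lemma \ref{lem:use} to reduce to counting factors of length $N+T$ of words in $X_{k_{l+1}+2}$, then enumerate them by their interaction with the zero-blocks.

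The structural claim underlying the lower bound is that for every $s \in [t, N]$, the length-$s$ prefixes and length-$s$ suffixes of the words in $X_{k_{l+1}}$ are pairwise distinct, so both families have cardinality exactly $s_{k_{l+1}}$. This is proved by induction through the pairing stages $k_l+r+1, \ldots, k_{l+1}$: at each step $X_m = \{\delta_i\,0^{n_{m-1}}\,\delta_{i+1 \bmod s_{k_l+r}}\}$, the length-$t$ prefix (respectively, suffix) of the $i$-th word remains a block from $X_{k_l+r}$, indexed by $i$ (respectively, by $i + (m - k_l - r) \bmod s_{k_l+r}$), uniquely recovering $i$. Hence for each of the $t+1$ values $s \in [t, 2t]$ (and correspondingly $T - s \in [t, 2t]$), pairing every length-$s$ suffix $u$ with every length-$(T-s)$ prefix $v$ yields $s_{k_{l+1}}^2$ distinct factors $u\,0^N\,v$ of $w$. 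To combine with $p_w(N)$, I would consider the surjective restriction $\rho\colon L_w(N+T) \to L_w(N)$, $g \mapsto g[1,N]$. For each of the $(t+1) s_{k_{l+1}}$ length-$N$ factors of the shape $f = u\,0^{N-s}$ (with $u$ a length-$s$ suffix of $X_{k_{l+1}}$, $s \in [t, 2t]$), the fibre $\rho^{-1}(f)$ contains the $s_{k_{l+1}}$ distinct type-B extensions $u\,0^N\,v$, one per length-$(T-s)$ prefix $v$. Then
\[
p_w(N+T) - p_w(N) = \sum_{f \in L_w(N)} (|\rho^{-1}(f)| - 1) \geq (t+1) s_{k_{l+1}} (s_{k_{l+1}} - 1) \geq t s_{k_{l+1}}^2,
\]
where the last inequality uses $s_{k_{l+1}} = N^\alpha \geq t + 1$, establishing the lower bound.

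For the upper bound, Lemma \ref{lem:use} implies every length-$(N+T)$ factor of $w$ lies in some $A\,0^{3N}\,B \in X_{k_{l+1}+2}$ with $A, B \in X_{k_{l+1}+1}$. Within a single $A = \alpha_1\,0^N\,\beta_1$, a length-$(N+T)$ factor is of ``type A'' (length-$s$ suffix of $X_{k_{l+1}}$ then zeros, $s \in [T, N]$), ``type B'' ($u\,0^N\,v$ with $|u|+|v|=T$, $|u|,|v| \geq 1$), or ``type C'' (zeros then length-$s$ prefix of $X_{k_{l+1}}$, $s \in [T, N]$); using the trivial bound that each family of length-$s$ prefixes/suffixes of $X_{k_{l+1}}$ has at most $s_{k_{l+1}}$ members, these contribute at most $2 N s_{k_{l+1}} + T s_{k_{l+1}}^2$. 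Factors straddling the boundary between $A$ (or $B$) and the outer $0^{3N}$ block have suffixes/prefixes of $X_{k_{l+1}+1}$-words of length at most $N+T < 2N$, which by the decomposition $A = \alpha_1 0^N \beta_1$ still admit at most $s_{k_{l+1}}$ values per length, contributing $O(N s_{k_{l+1}})$ more; the all-zero factor $0^{N+T}$ adds one. Since $s_{k_{l+1}} = N^\alpha \geq N$ gives $N s_{k_{l+1}} \leq T s_{k_{l+1}}^2$, the grand total is $O(T s_{k_{l+1}}^2) = O(t s_{k_{l+1}}^2)$. The main delicate point throughout is the distinctness claim for length-$s$ prefixes/suffixes of $X_{k_{l+1}}$, which requires careful bookkeeping of the cyclic indexing across $k_{l+1} - k_l - r$ pairing iterations; once it is in hand, both bounds reduce to essentially routine counting.
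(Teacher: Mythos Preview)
Your proof is correct. The upper bound follows the paper's approach (Lemma~\ref{lem:use} plus case analysis by interaction with zero-blocks), only packaged around the coarser $A\,0^{3N}\,B$ decomposition rather than the paper's seven-block layout; both reach $O(ts_{k_{l+1}}^2)$ via $s_{k_{l+1}}\geq n_{k_{l+1}}$. The lower bound, however, is argued differently. The paper builds two almost-disjoint families of length-$(n_{k_{l+1}}{+}3t)$ factors: fixed one-element extensions $u_\alpha\alpha v_\alpha$ of each $\alpha\in L_w(n_{k_{l+1}})$, contributing $p_w(n_{k_{l+1}})$ words, and words $\xi_{u,v,i}=\mu u\,0^{n_{k_{l+1}}}v\nu$ indexed by $(u,v,i)\in X_{k_l+r}^2\times[0,t]$, contributing $(t{+}1)s_{k_{l+1}}^2$ words; it then shows these families meet only in the chosen extension of $0^{n_{k_{l+1}}}$, using that each $\xi_{u,v,i}$ contains a unique copy of $0^{n_{k_{l+1}}}$. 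You instead invoke the fibre identity $p_w(N{+}T)-p_w(N)=\sum_f(|\rho^{-1}(f)|-1)$ and show that $(t{+}1)s_{k_{l+1}}$ particular fibres each have size at least $s_{k_{l+1}}$, sidestepping the disjointness verification entirely at the price of your structural claim that length-$t$ prefixes and suffixes of words in $X_{k_{l+1}}$ are pairwise distinct. One small imprecision: your indexing (``prefix indexed by $i$, suffix by $i+(m-k_l-r)$'') presupposes a consistent enumeration of $X_{m-1}$ carried across the pairing stages, which the construction does not stipulate; but the conclusion still holds by the simpler induction that the length-$t$ prefix of any word in $X_m$ equals that of its first $X_{m-1}$-block, and the map $X_m\to X_{m-1}$ sending a word to its first block is a bijection.
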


\begin{proof}
We begin with the upper bound. We first observe that $$n_{k_{l+1}} + 3t = n_{k_{l+1}}+3n_{k_l + r} \leq n_{k_{l+1}} + 3n_{k_{l+1} - 1} = 2n_{k_{l+1}}$$
which is smaller than $n_{k_{l+1}+1}$, so by Lemma \ref{lem:use}, every word $\alpha$ of length $n_{k_{l+1}}+3t$ factors some word from 
\begin{eqnarray*}
X_{k_{l+1}+2} & \subseteq & X_{k_{l+1}+1} 0 ^{n_{k_{l+1}+1}} X_{k_{l+1}+1} \\ & \subseteq & \underbrace{X_{k_{l+1}}}_{A} \underbrace{0^{n_{k_{l+1}}}}_{B} \underbrace{X_{k_{l+1}}}_{C} \underbrace{0^{n_{k_{l+1}+1}}}_{D} \underbrace{X_{k_{l+1}}}_{E} \underbrace{0^{n_{k_{l+1}}}}_{F} \underbrace{X_{k_{l+1}}}_{G}.
\end{eqnarray*}
Since $|\alpha|=n_{k_{l+1}} + 3t \leq 2n_{k_{l+1}} < n_{k_{l+1}+1}$, it follows that $\alpha$ factors some word from
$$ABC \cup BCD \cup DEF \cup EFG.$$
(Notice that $EFG=ABC$ but we included it for clarity.) Therefore $\alpha$ is of one of the following types:

\begin{enumerate}[(I)]
    \item $u0^i$ where $u$ is a suffix of some word from $X_{k_{l+1}}$
    \item $0^i v$ where $v$ is some prefix of some word from $X_{k_{l+1}}$
    \item $0^i u 0^j$ where $u\in X_{k_{l+1}}$
    \item $u0^{n_{k_{l+1}}}v$ where $u$ is a suffix of some word from $X_{k_{l+1}}$ and $v$ is a prefix of some word from $X_{k_{l+1}}$.
\end{enumerate}

We now turn to bound the number of possible words of length $n_{k_{l+1}}+3t$ of each one of types (I)---(IV).

\begin{enumerate}[(I)]
    \item The number of words of length $n_{k_{l+1}}+3t$ taking the form $u0^i$ where $u$ is a suffix of a word from $X_{k_{l+1}}$ is at most $$\sum_{j=0}^{n_{k_{l+1}}} \#\Big\{\substack{\text{Suffixes of words from}\ X_{k_{l+1}} \\ \text{of length}\ j}\Big\} \leq  (n_{k_{l+1}}+1)|X_{k_{l+1}}| \leq 2n_{k_{l+1}} s_{k_{l+1}}.$$
    \item The number of words of length $n_{k_{l+1}}+3t$ taking the form $0^iv$ where $v$ is a prefix of a word from $X_{k_{l+1}}$ is at most $$\sum_{j=0}^{n_{k_{l+1}}} \#\Big\{\substack{\text{Prefixes of words from}\ X_{k_{l+1}} \\ \text{of length}\ j}\Big\} \leq  (n_{k_{l+1}}+1)|X_{k_{l+1}}| \leq 2n_{k_{l+1}} s_{k_{l+1}}.$$
    \item The number of words of length $n_{k_{l+1}}+3t$ taking the form $0^i u 0^j$ where $u\in X_{k_{l+1}}$ is at most 
    $$\sum_{i=0}^{n_{k_{l+1}} + 3t} \#\{\text{Options for }\ u\} = (n_{k_{l+1}}+3t+1) |X_{k_{l+1}}| \leq 3n_{k_{l+1}}s_{k_{l+1}}.$$
    \item The number of words of length $n_{k_{l+1}}+3t$ taking the form $u 0^{n_{k_{l+1}}} v$ where $u$ is a suffix of a word from $X_{k_{l+1}}$ and $v$ is a prefix of a word from $X_{k_{l+1}}$ is at most 
    \begin{eqnarray*} & & \sum_{i=0}^{3t} \#\Big\{\substack{\text{Suffixes of words from}\ X_{k_{l+1}} \\ \text{of length}\ i}\Big\} \cdot \#\Big\{\substack{\text{Prefixes of words from}\ X_{k_{l+1}} \\ \text{of length}\ 3t-i}\Big\}  \\ & & \leq  (3t+1)|X_{k_{l+1}}|^2 \leq   4ts_{k_{l+1}}^2.
\end{eqnarray*}
\end{enumerate}
Altogether, the number of options for $\alpha$ as above is at most:
$$ \underbrace{2n_{k_{l+1}} s_{k_{l+1}}}_{(I)} + \underbrace{2n_{k_{l+1}} s_{k_{l+1}}}_{(II)} + \underbrace{3n_{k_{l+1}}s_{k_{l+1}}}_{(III)}+\underbrace{4ts_{k_{l+1}}^2}_{(IV)} \leq11 t s_{k_{l+1}}^2, $$
where the inequality follows since $s_{k_{l+1}}=n_{k_{l+1}}^{\alpha} \geq n_{k_{l+1}}$.
The upper bound is proved.

\medskip

We now move to the lower bound. Since $w$ is recurrent (Proposition \ref{prop:infinite word}), for each factor $\alpha$ of $w$ of length $n_{k_{l+1}}$, we can consider some extension $u_\alpha \alpha v_\alpha$ where $u_\alpha$ is of length $t$ and $v_\alpha$ is of length $2t$. Clearly, the assignment $\alpha \mapsto u_\alpha \alpha v_\alpha$ is injective. 

Next, for every $u,v \in X_{k_l + r}$, since $k_l + r < k_{l+1}$, Lemma \ref{lem:11} tells us that $u$ is a suffix of some word in $X_{k_{l+1}}$ and $v$ is a prefix of some word in $X_{k_{l+1}}$. Therefore, $u0^{n_{k_{l+1}}} v$ factors a word from $X_{k_{l+1}}0^{n_{k_{l+1}}} X_{k_{l+1}}$, which is equal to $X_{k_{l+1} + 1}$ by construction. For each $u,v\in X_{k_{l}+r}$ and $0\leq i\leq t = n_{k_{l}+r}$, fix an arbitrary extension of $u0^{n_{k_{l+1}}} v$ to a factor of $w$ (again, recall that this is possible as $w$ is recurrent)
$$ \xi_{u,v,i} := \underbrace{\mu}_{i} \underbrace{u}_{t} \underbrace{0^{n_{k_{l+1}}}}_{n_{k_{l+1}}} \underbrace{v}_{t} \underbrace{\nu}_{t-i} $$ for some $\mu$ of length $i$ and $\nu$ of length $t-i$. Recall that $|u|=|v|=t$, and therefore $|\xi_{u,v,i}|=n_{k_{l+1}}+3t$. Moreover, we claim that the assignment 
\begin{eqnarray*}
X_{k_{l}+r} \times X_{k_{l}+r} \times [0,t] & \longrightarrow & \{\text{Subwords of}\ w\ \text{of length}\ n_{k_{l+1}}+3t\} \\
(u,v,i) & \longmapsto & \xi_{u,v,i}
\end{eqnarray*}
is injective. Indeed, $\xi_{u,v,i}$ contains a \emph{unique} occurrence of $0^{n_{k_{l+1}}}$, since $u,v$ start and end with either `$1$' or `$2$' (by Lemma \ref{lem:11}) and $\mu u,v \nu$ are of length $\leq 2t < n_{k_{l+1}}$. This allows us to trace back $u,v$ and therefore $i$. 

Furthermore, we claim that
\begin{eqnarray*}
& & \{u_\alpha \alpha v_\alpha\ |\ \alpha\ \text{factor of}\ w\ \text{of length}\ n_{k_{l+1}}\} \cap \{\xi_{u,v,i}\ |\ u,v\in X_{k_{l}+r},\ 0\leq i\leq t\} \\ & & = \{u_{0\cdots 0}0^{n_{k_{l+1}}} v_{0\cdots 0}\},
\end{eqnarray*}
a singleton.
This can be seen since $u_\alpha,v_\alpha$ end and start with either `$1$' or `$2$' (by Lemma \ref{lem:11}), and since their lengths are $\leq 2t = 2n_{k_{l}+r} < 3n_{k_{l}+r} = n_{k_{l}+r+1} \leq n_{k_{l+1}}$, the only possibility for $u_\alpha \alpha v_\alpha$ to contain an occurrence of $0^{n_{k_{l+1}}}$ is if $\alpha=0^{n_{k_{l+1}}}$.

Therefore:
\begin{eqnarray*}
p_w(n_{k_{l+1}}+3t) & \geq & \Big| \{u_\alpha \alpha v_\alpha\ |\ \alpha\ \text{factor of}\ w\ \text{of length}\ n_{k_{l+1}}\} \cup \\ & &  \{\xi_{u,v,i}\ |\ u,v\in X_{k_{l}+r},\ 0\leq i\leq t\} \Big| \\ & = & | \{u_\alpha \alpha v_\alpha\ |\ \alpha\ \text{factor of}\ w\ \text{of length}\ n_{k_{l+1}}\} | \\ & & + | \{\xi_{u,v,i}\ |\ u,v\in X_{k_{l}+r},\ 0\leq i\leq t\} | - 1 \\ & = & |\{\text{Subwords of}\ w\ \text{of length}\ n_{k_{l+1}}\}| + |X_{k_{l}+r}|^2\cdot (t+1) - 1 \\ & \geq & p_w(n_{k_{l+1}}) + s_{k_{l}+r}^2 t \\ & = & p_w(n_{k_{l+1}}) + s_{k_{l+1}}^2 t
\end{eqnarray*}
(for the last equality, recall that $s_m = s_{k_{l}+r}$ for all $k_l + r + 1 \leq m \leq k_{l+1}$, by construction.) The lemma is proved.
\end{proof}

\begin{proof}[{Proof of Theorem \ref{thm:Cassaigne2}}]
Let $\varepsilon>0$ be given.
We use the word $w$ constructed above; by Proposition \ref{prop:infinite word}, $w$ is recurrent. Recall that the construction depends on the choice of $r$, which we take to be a large enough integer at least $2$ for which $\frac{1}{2^{r-1}} < \varepsilon$. By Lemma \ref{lem:p(n+3t)}, 
\begin{equation*}
\sum_{m=n_{k_{l+1}}+1}^{n_{k_{l+1}}+3t} p_w'(m) = p_w(n_{k_{l+1}}+3t) - p_w(n_{k_{l+1}}) \geq t s_{k_{l+1}}^2
\end{equation*}
so there exists some $m \in [n_{k_{l+1}}+1,n_{k_{l+1}}+3t]$ such that \begin{equation} \label{eq:111} 
p_w'(m)\geq \frac{t s_{k_{l+1}}^2 }{3t} = \frac{1}{3} s_{k_{l+1}}^2. 
\end{equation}
Moreover, again by Lemma \ref{lem:p(n+3t)} (the upper bound),

\begin{equation} \label{eq:222}
\frac{p_w(m)}{m} \leq \frac{p_w(n_{k_{l+1}}+3t)}{n_{k_{l+1}}} \leq \frac{K  t s_{k_{l+1}}^2}{n_{k_{l+1}}}.
\end{equation}
Now $$t=n_{k_{l}+r} = 3^{k_{l}+r-1},\ \ \ n_{k_{l+1}} = 3^{k_{l+1} - 1} = 3^{(k_l - 1)\cdot 2^r}$$
and for $l\gg_r 1$, we have that $k_l \geq r+1$ and so
\begin{equation} \label{eq:333}
n_{k_{l+1}}^{\frac{1}{2^{r-1}}} = 3^{\frac{(k_l - 1)\cdot 2^r}{2^{r-1}}} = 3^{2k_l - 2} \geq 3^{k_l + r -1} = n_{k_{l}+r} = t.
\end{equation}
Combined with Equation (\ref{eq:222}),
$$\frac{p_w(m)}{m} \leq K s_{k_{l+1}}^2 \frac{t}{n_{k_{l+1}}} \leq K s_{k_{l+1}}^2 n_{k_{l+1}}^{\frac{1}{2^{r-1}} - 1}$$
and, combined with Equation (\ref{eq:111}),
\begin{eqnarray*}
\frac{p_w(m)}{m} & \leq & K s_{k_{l+1}}^2 n_{k_{l+1}}^{\frac{1}{2^{r-1}} - 1} \\ & \leq & 3K n_{k_{l+1}}^{\frac{1}{2^{r-1}} - 1} p_w'(m) \\ & \leq & 3K \left(\frac{1}{2}m\right)^{\frac{1}{2^{r-1}}-1}p_w'(m) \\& = & 3K \cdot 2^{1-\frac{1}{2^{r-1}}} \cdot m^{\frac{1}{2^{r-1}}-1}p_w'(m)
\end{eqnarray*}
since $m\leq n_{k_{l+1}}+3t\leq 2n_{k_{l+1}}$ (notice that the power $x = \frac{1}{2^{r-1}-1}-1$ is negative, so $n_{k_{l+1}}^x \leq \left(\frac{1}{2}m\right)^x$).
Since $\frac{1}{2^{r-1}}<\varepsilon$, it follows that for infinitely many $m$'s,
$$\frac{p_w(m)}{m} \leq m^{\varepsilon - 1}p_w'(m)$$
or equivalently,
$$p_w'(m) \geq \frac{p_w(m)}{m^\varepsilon},$$
as claimed.
\end{proof}

\section{Strictly ergodic subshifts}

The following is a known combinatorial description of uniquely ergodic subshifts.

\begin{prop} \label{prop:uniquely}
Let $\mathcal{X}\subseteq \Sigma^{\mathbb{Z}}$ be a subshift. Then $\mathcal{X}$ is uniquely ergodic if and only if for every finite word $u\in \Sigma^*$ the limit (`frequency')
$$
\omega(u):=\lim_{n\rightarrow \infty} \frac{|\text{Occurrences of}\ u\ \text{in}\ w_0\cdots w_{n-1}\}|}{n}
$$
exists uniformly for all $w\in \mathcal{X}$.

Suppose that $\mathcal{X}$ is the shift closure of the infinite word $w\in \Sigma^{\mathbb{Z}}$. Then, equivalently, the above condition becomes -- 
for every finite word $u\in \Sigma^*$ the limit
$$
\omega(u):=\lim_{n\rightarrow \infty} \frac{|\text{Occurrences of}\ u\ \text{in}\ w_i\cdots w_{i+n-1}\}|}{n}
$$
exists uniformly for all $i\in \mathbb{Z}$.
\end{prop}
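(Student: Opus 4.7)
The plan is to invoke the classical characterization of unique ergodicity via uniform convergence of Birkhoff averages, and then translate it into the language of word frequencies using the clopen structure of cylinder sets. First I would recall (or briefly reprove) the general fact from topological dynamics: a homeomorphism $T$ of a compact metric space $\mathcal{X}$ is uniquely ergodic if and only if for every $f\in C(\mathcal{X})$, the Birkhoff averages $\frac{1}{n}\sum_{j=0}^{n-1}f\circ T^j$ converge uniformly on $\mathcal{X}$ to a constant. If some averages $\frac{1}{n_k}\sum_{j=0}^{n_k-1}f(T^j x_k)$ stayed bounded away from $\int f\,d\mu$, a weak-$*$ limit of the empirical measures $\frac{1}{n_k}\sum_{j=0}^{n_k-1}\delta_{T^j x_k}$ would be a $T$-invariant probability measure disagreeing with $\mu$ on $f$. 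Conversely, uniform convergence of Birkhoff averages to a constant forces every invariant measure to assign the same integral to every continuous function, so all invariant measures coincide.

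Second, I would specialize to subshifts. The indicator $\mathbf{1}_{[u]}$ of the cylinder $[u]=\{x\in\mathcal{X}:x_0\cdots x_{|u|-1}=u\}$ is continuous because cylinders are clopen, and the number of occurrences of $u$ in $w_0\cdots w_{n-1}$ equals $\sum_{j=0}^{n-|u|}\mathbf{1}_{[u]}(T^jw)$, which differs from $\sum_{j=0}^{n-1}\mathbf{1}_{[u]}(T^jw)$ by at most $|u|$; dividing by $n$ shows that $\omega(u)$ exists uniformly in $w\in\mathcal{X}$ if and only if the Birkhoff averages of $\mathbf{1}_{[u]}$ converge uniformly on $\mathcal{X}$. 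The linear span of the shifted cylinder indicators $\mathbf{1}_{[u]}\circ T^k$ is a unital subalgebra of $C(\mathcal{X})$ separating points, hence dense in the sup norm by Stone--Weierstrass; moreover, $\mathbf{1}_{[u]}\circ T^k$ has Birkhoff averages differing from those of $\mathbf{1}_{[u]}$ by $O(1/n)$. Therefore uniform convergence of frequencies for all $u$ propagates to uniform convergence of Birkhoff averages for every $f\in C(\mathcal{X})$ via an $\varepsilon/3$-argument, yielding the first equivalence when combined with step one.

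Finally, for the second formulation, where $\mathcal{X}$ is the orbit closure of $w\in\Sigma^{\mathbb{Z}}$, I would transfer uniformity between $\mathcal{X}$ and the orbit of $w$. One direction is immediate since each $T^iw$ lies in $\mathcal{X}$. For the converse, the key observation is that the count of occurrences of a fixed $u$ in $x_0\cdots x_{n-1}$ depends only on the finite factor $x_0\cdots x_{n+|u|-2}$, which, by density of the orbit of $w$ in $\mathcal{X}$, is also a finite factor of $w$; hence for every $x\in\mathcal{X}$ and every $n$ there exists some $i\in\mathbb{Z}$ making the count for $x$ equal to the count for $T^iw$, so uniformity in $i$ transfers exactly to uniformity in $x$. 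This clopen-approximation step---rather than a general uniform-on-a-dense-set argument, which would not suffice for arbitrary sequences of continuous functions---is the subtlest point of the proof.
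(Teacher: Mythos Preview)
Your proof is correct and follows the standard approach (uniform convergence of Birkhoff averages characterizes unique ergodicity, then reduce via Stone--Weierstrass to cylinder indicators, then transfer to the generating orbit using that occurrence counts depend only on finite factors). The paper, however, does not prove this proposition at all: it is introduced as ``a known combinatorial description of uniquely ergodic subshifts'' and stated without proof, so there is no argument in the paper to compare against.
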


Our goal in this section is to prove Theorem \ref{thm:strictly}. 
Our technique is inspired by our earlier construction \cite{GMZ} of minimal subshifts with prescribed complexity, which in turn, is a modification of \cite{SB}. However, strict ergodicity is a much stronger condition than minimality and requires a finer analysis.

\subsection{Construction}

Let $\Sigma$ be a set of cardinality $b:=f(1)$. We define a sequence of positive integers $\{c_{k}\ \colon\ k=0,1,2,\dots\}$ and define $N_k:=bc_0c_1\cdots c_k$ accordingly.

\begin{prop} \label{prop:define}
Given a non-decreasing, submultiplicative, subexponential function $f\colon \mathbb{N}\rightarrow \mathbb{N}$, we can define $\{c_k\ :\ k=0,1,2,\dots\}$ as above such that:
\begin{enumerate}
\item $1\leq c_{k+1}\leq N_k$ for all $k\geq 0$ and $c_0\leq b$
\item $f(2^k)\leq N_k \leq 2f(2^{k+1})$ for all $k\geq 0$
\item $c_k=1$ for infinitely many values of $k$.
\end{enumerate}
\end{prop}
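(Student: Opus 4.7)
The plan is to construct $\{c_k\}_{k=0}^\infty$ inductively by alternating two kinds of moves: an \emph{economical} move ($c_k = 1$), chosen whenever $N_{k-1}$ already exceeds the required lower bound $f(2^k)$, and a \emph{push} move that drives $N_k$ as high as conditions (1) and (2) allow. The governing idea is that every push move is designed to end with $N_k \geq f(2^{k+1})$, which forces the next step to be economical, producing infinitely many indices $k$ with $c_k = 1$.

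I would first set $c_0 \in \{1, \dots, b\}$ to be maximal with $b c_0 \leq 2f(2)$, placing $N_0$ in $[f(1), 2f(2)]$. For $k \geq 1$, assuming $N_{k-1} \in [f(2^{k-1}), 2f(2^k)]$, I define
\[
c_k = \begin{cases} 1 & \text{if } N_{k-1} \geq f(2^k), \\ \min\!\bigl(N_{k-1},\ \lfloor 2f(2^{k+1})/N_{k-1}\rfloor\bigr) & \text{otherwise.} \end{cases}
\]
Conditions (1) and (2) then follow by a short case analysis. The economical case inherits both bounds from $N_{k-1}$ directly. In the push case, submultiplicativity gives $f(2^k) \leq f(2^{k-1})^2 \leq N_{k-1}^2$, so the feasible interval $\bigl[\lceil f(2^k)/N_{k-1}\rceil,\ \lfloor 2f(2^{k+1})/N_{k-1}\rfloor\bigr]$ has length at least $f(2^k)/N_{k-1} > 1$, it contains our chosen $c_k$, and $c_k \leq N_{k-1}$ holds by construction.

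Condition (3) is the main difficulty. The plan is to show that every push step at index $k$ results in $N_k \geq f(2^{k+1})$, so step $k+1$ is automatically economical. The uncapped sub-case $c_k = \lfloor 2f(2^{k+1})/N_{k-1}\rfloor$ gives $N_k > 2f(2^{k+1}) - N_{k-1} > f(2^{k+1})$, since the push case has $N_{k-1} < f(2^k) \leq f(2^{k+1})$. The capped sub-case $c_k = N_{k-1}$ (arising when $N_{k-1}^2 \leq 2f(2^{k+1})$) is more delicate: here $N_k = N_{k-1}^2 \geq f(2^{k-1})^2$, while submultiplicativity yields $f(2^{k+1}) \leq f(4)\, f(2^{k-1}) \leq b^2 f(2^{k-1})$, so $N_k \geq f(2^{k+1})$ as soon as $f(2^{k-1}) \geq b^2$. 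This holds for all large $k$; otherwise $f$ is eventually constant on powers of $2$ and condition (3) is immediate. The upshot is that push steps are (eventually) always followed by economical steps, yielding $c_k = 1$ for infinitely many $k$.
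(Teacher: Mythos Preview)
Your construction and the verification of conditions (1) and (2) are essentially correct. The problem is in your argument for condition (3), specifically the ``capped'' push sub-case.

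You claim that submultiplicativity gives $f(2^{k+1}) \leq f(4)\,f(2^{k-1})$. But in this paper (as for complexity functions generally), submultiplicativity means $f(m+n)\leq f(m)f(n)$; this is the form you yourself use a few lines earlier when writing $f(2^k)\leq f(2^{k-1})^2$. Under this additive form, $f(2^{k+1}) \leq f(4)f(2^{k-1})$ would require $2^{k+1}=4+2^{k-1}$, which fails for $k\geq 2$. (Even under the multiplicative form $f(mn)\leq f(m)f(n)$, the subsequent bound $f(4)\leq b^2$ is unjustified.) Consequently, your conclusion that every push step forces $N_k\geq f(2^{k+1})$ is not established, and in fact it is false in general: runs of consecutive capped push steps can occur.

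The fix is exactly the argument the paper uses. Suppose for contradiction that only finitely many steps are economical. Since every uncapped push step yields $N_k>2f(2^{k+1})-N_{k-1}>f(2^{k+1})$ and hence is followed by an economical step, eventually all steps are capped pushes: $c_k=N_{k-1}$ and $N_k=N_{k-1}^2$ for all $k\geq k_0$. Then $N_k=N_{k_0}^{2^{k-k_0}}$, and from $N_k\leq 2f(2^{k+1})$ one gets $f(2^{k+1})^{1/2^{k+1}}\geq N_{k_0}^{1/2^{k_0+1}}>1$ for all large $k$, contradicting subexponentiality of $f$. This replaces your flawed local claim (``push $\Rightarrow$ next step economical'') with a global contradiction, and completes the proof of (3).
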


\begin{proof}
We put $c_0=1$ and notice that $N_0=bc_0=b$ satisfies $f(2^0)=b = N_0 \leq 2f(2^{0+1})$, where the last inequality follows since $f$ is non-decreasing. Hence the above requirements (1),(2) hold for $k=0$. For each $k\geq 0$ for which $c_{k+1}$ has not been defined yet, we proceed as follows:
\begin{align*} \label{eqn:construction}
\text{If } N_k^2\leq 2f(2^{k+2}): & & & c_{k+1} = N_k \\
\text{If } N_k^2 > 2f(2^{k+2}): & & &c_{k+1}=\lfloor \frac{2f(2^{k+2})}{N_k} \rfloor \nonumber \\ 
& & &c_{k+2} = 1 \nonumber
\end{align*}
and notice that in any case, $1\leq c_{i+1}\leq N_i$; the only non-trivial case is when $i=k$ and $N_k^2 > 2f(2^{k+2})$. Then $$c_{k+1}=\lfloor \frac{2f(2^{k+2})}{N_k}\rfloor \leq \frac{2f(2^{k+2})}{N_k} < N_k$$ and by the induction hypothesis, $N_k\leq 2f(2^{k+2})$ so $c_{k+1}\geq 1$. This proves that the above requirement (1) is fulfilled.

We now turn to examine requirement (2). Suppose that $c_{k+1}$ has not been defined in the previous step and that $N_k^2 \leq 2f(2^{k+2})$. Then $$ N_{k+1}=N_kc_{k+1}=N_k^2\leq 2f(2^{k+2}), $$ and $$ N_{k+1}=N_k^2\geq f(2^k)^2\geq f(2^{k+1}) $$ by submultiplicativity and by the induction hypothesis.
Now suppose that $c_{k+1}$ has not been defined in the previous step and that $N_k^2 > 2f(2^{k+2})$. Then $$ N_{k+1}=N_k \lfloor \frac{2f(2^{k+2})}{N_k} \rfloor\leq 2f(2^{k+2}) $$ and 
\begin{align*}
N_{k+1}=N_k \lfloor \frac{2f(2^{k+2})}{N_k} \rfloor & \geq  N_k\left(\frac{2f(2^{k+2})}{N_k} - 1\right) \\ & \geq   N_k \cdot \frac{f(2^{k+2})}{N_k} = f(2^{k+2}) \geq f(2^{k+1}) \end{align*}
(since $N_k\leq f(2^{k+2})$ and so $\frac{f(2^{k+2})}{N_k} \geq 1$) 
and $N_{k+2}=N_{k+1}$ satisfies $$ N_{k+2}=N_{k+1}\leq 2f(2^{k+2})\leq 2f(2^{k+3}) $$ and $N_{k+2}=N_{k+1}\geq f(2^{k+2})$ as shown above. Therefore, requirement (2) is fulfilled as well.

Finally, we claim that $c_k=1$ infinitely often. We claim that, for infinitely many values of $k$, we have $N_k^2>2f(2^{k+2})$. Otherwise, there exists some $k_0$ such that for all $k\geq k_0$ we have $N_k^2\leq 2f(2^{k+2})$, so $c_{k+1}=N_k$ and thus $N_{k+1}=N_k^2$. It follows that for all $k\geq k_0$ we have that
$$N_k = N_{k_0}^{2^{k-k_0}}$$
so $$f(2^{k+2})\geq \frac{1}{2}N_{k+1}=\frac{1}{2}N_k^2\geq N_k=N_{k_0}^{2^{k-k_0}}$$
and therefore
$$f(2^{k+2})^{\frac{1}{2^{k+2}}}\geq N_{k_0}^{\frac{2^{k-k_0}}{2^{k+2}}}=N_{k_0}^{\frac{1}{2^{k_0+2}}}=:K>1$$
where $K$ is a constant independent of $k$. Hence, taking the limit as $k\rightarrow \infty$, we see that $f$ grows exponentially, contradicting the assumption. Hence $N_k^2 > 2f(2^{k+2})$ infinitely often, which means that $c_{k+2}=1$ infinitely often. This proves that requirement (3) is fulfilled.
\end{proof}

Given a non-decreasing, submultiplicative, subexponential function $f\colon \mathbb{N}\rightarrow \mathbb{N}$, fix a sequence $\{c_k\ \colon\ k=0,1,2,\dots\}$ as in Proposition \ref{prop:define}. Let $\mathcal{S}\subseteq \mathbb{N}$ be the set of indices $k$ for which $c_k=1$, which, as guaranteed, is infinite. 
We define, for each $k=0,1,2,\dots$, sets $$W(k)\subseteq \Sigma^{2^k}, \ C(k)\subseteq W(k),\ U(k)$$ such that $$|C(k)|=c_k, \ |W(k)|=bc_0\cdots c_{k-1}, \  U(k)\subseteq \bigcup_{i=0}^{k} W(i).$$ Moreover, each $U(k)$ is in fact an \emph{ordered list} which can be written as $U(k)=(u^{(k)}_{1},\dots,u^{(k)}_{t_k})$.

We start with $W(0)=\Sigma$ (notice that $|W(0)|=|\Sigma|=b$) and $U(0)$ is a list containing the elements of $W(0)$, arbitrarily ordered. Suppose that $$\{W(i),U(i)\ \colon\ i=0,1,\dots,k\}, \ \{C(i)\ \colon\ i=0,1,\dots,k-1\}$$ have been defined for some $k\geq 0$. 

If $k\notin \mathcal{S}$, or if $k\in \mathcal{S}$ but $2^k<|u^{(k)}_1|$ then we take $C(k)$ to be an arbitrary subset of $W(k)$ of size $c_k$, which is indeed possible since by Proposition \ref{prop:define}, $1 \leq c_k \leq bc_0\cdots c_{k-1}$. We take $W(k+1)=W(k)C(k)$ and update $U(k+1)$ by appending $W(k+1)$, arbitrarily ordered, to the end of $U(k)$.

If $k\in \mathcal{S}$ (so $c_k=1$) and $2^k\leq |u^{(k)}_1|$ then we fix an arbitrary word $v\in W(k)$ whose prefix is $u^{(k)}_1$, and take $C(k)=\{v\}$. We take $$W(k+1)=W(k)C(k)=W(k)\cdot \{v\}$$ and  update $U(k+1)$ by appending $W(k+1)$, arbitrarily ordered, to the end of $U(k)$, and omitting the first word in it (so $u^{(k+1)}_1=u^{(k)}_2$).

Consider the $\mathbb{Z}$-subshift over the alphabet $\Sigma$ consisting of all bi-infinite words all of whose finite subwords factor subwords from $\bigcup_{k=0}^{\infty} W(k)$. Denote this subshift by $\mathcal{X}$. As in \cite{GMZ}, it follows that $\mathcal{X}$ is minimal and that $f(n)\preceq p_\mathcal{X}(n) \preceq nf(n)$. It remains to show that $\mathcal{X}$ is uniquely ergodic.

\subsection{Unique ergodicity}

Give words $u\in \Sigma^d$ and $w\in \Sigma^m$ we let $\Phi_u(w)$ count the occurrences of $u$ as a subword of $w$ and $\varphi_u(w)=\frac{\Phi_u(w)}{m}$. Thus $\varphi_u(w)\in [0,1]$.

\begin{lem} \label{lem:add}
We have $$\Phi_u(w_0)+\Phi_u(w_1)\leq \Phi_u(w_0w_1)\leq \Phi_u(w_0)+\Phi_u(w_1) + |u| - 1.$$
\end{lem}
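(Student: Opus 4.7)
The plan is to classify the occurrences of $u$ in the concatenation $w_0 w_1$ by the starting position of the occurrence relative to the boundary between $w_0$ and $w_1$. Write $m_0 = |w_0|$, $m_1 = |w_1|$, and $d = |u|$. An occurrence of $u$ in $w_0 w_1$ is a starting index $i \in \{0, 1, \ldots, m_0 + m_1 - d\}$ such that $(w_0 w_1)[i, i+d-1] = u$. I split this index set into three types: (A) $i + d \leq m_0$, meaning the occurrence lies entirely inside $w_0$; (B) $i \geq m_0$, meaning the occurrence lies entirely inside $w_1$; (C) $i < m_0 < i + d$, meaning the occurrence straddles the boundary.

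For the lower bound, note that any occurrence of $u$ in $w_0$ corresponds bijectively to a type-(A) occurrence in $w_0 w_1$ (same starting index), and any occurrence of $u$ in $w_1$ starting at position $j$ corresponds to a type-(B) occurrence in $w_0 w_1$ starting at position $m_0 + j$. Since type (A) and type (B) are disjoint by definition, summing them gives $\Phi_u(w_0 w_1) \geq \Phi_u(w_0) + \Phi_u(w_1)$.

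For the upper bound, I count each type separately. The number of type-(A) occurrences is exactly $\Phi_u(w_0)$ (same indexing), and the number of type-(B) occurrences is exactly $\Phi_u(w_1)$ via the shift $i \mapsto i - m_0$. The number of type-(C) occurrences is at most the number of starting indices in the range $m_0 - d + 1 \leq i \leq m_0 - 1$, which is exactly $d - 1 = |u| - 1$. Summing the three bounds yields $\Phi_u(w_0 w_1) \leq \Phi_u(w_0) + \Phi_u(w_1) + |u| - 1$.

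There is no real obstacle here; the proof is a bookkeeping argument, and the only thing to be careful about is the edge case where $d > m_0$ or $d > m_1$, in which case one of $\Phi_u(w_0), \Phi_u(w_1)$ is zero by convention but the counting partition above still makes sense (type (A) or (B) may be empty, and the type-(C) range $\{m_0 - d + 1, \ldots, m_0 - 1\}$ should be intersected with the valid index range $\{0, \ldots, m_0 + m_1 - d\}$, which only shrinks the count). Both inequalities remain valid without modification.
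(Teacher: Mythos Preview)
Your proof is correct and follows the same partition-by-starting-position argument as the paper: occurrences lying entirely in $w_0$, entirely in $w_1$, or straddling the boundary, with the latter contributing at most $|u|-1$. The paper states this in two lines, while you spell out the bijections and the edge cases more carefully, but the idea is identical.
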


\begin{proof}
Every occurrence of $u$ in $w_0$ or $w_1$ is in particular an occurrence in $w_0w_1$; and every occurrence of $u$ in $w_0w_1$ either starts and ends in $w_0$ or $w_1$, or starts in $w_0$ but ends in $w_1$, so its starting position must be within the last $|u|-1$ letters of $w_0$.
\end{proof}

Fix a word $u$. Let $I_n$ be the convex closure of $\{ \varphi_u(w)\ \colon\ w\in W(n) \}$. In other words, $I_n=[a_n,b_n]$ where 
\begin{align*} a_n & =  \min\{ \varphi_u(w)\ \colon\ w\in W(n) \},\\ b_n & =  \max\{ \varphi_u(w)\ \colon\ w\in W(n) \}.\end{align*}

\begin{lem} \label{lem:frequencies for W}
Let $u \in \Sigma^d$ be some finite word over $\Sigma$. Then for every $n$, $$I_{n+1} \subseteq I_n + \left[0,\frac{d}{2^{n+1}}\right] $$
and $\l(I_n)\xrightarrow{n\rightarrow \infty} 0$. Moreover, there exists a real number $0\leq L\leq 1$ such that every sequence $\{x_n\}_{n=1}^{\infty}$ such that $x_n\in I_n$ converges to $L$.
\end{lem}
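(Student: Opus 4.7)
The plan is to first establish the inclusion $I_{n+1} \subseteq I_n + [0, d/2^{n+1}]$ from Lemma \ref{lem:add}, then exploit the infinitely many indices $n \in \mathcal{S}$ where $|C(n)| = 1$ to obtain a contractive length recursion, and finally conclude existence of the common limit $L$ by monotonicity of the left endpoint $a_n$ together with $b_n - a_n \to 0$.

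For the inclusion, I would start with an arbitrary $w \in W(n+1)$ and decompose $w = w_1 w_2$ with $w_1 \in W(n)$ and $w_2 \in C(n) \subseteq W(n)$, which is possible by the construction $W(n+1) = W(n) C(n)$. Lemma \ref{lem:add} gives $\varphi_u(w) = \tfrac{1}{2}(\varphi_u(w_1) + \varphi_u(w_2)) + \eta$ with $\eta \in [0, (d-1)/2^{n+1}]$. Since $\varphi_u(w_1), \varphi_u(w_2) \in I_n$ and $I_n$ is a convex interval, their average lies in $I_n$, so $\varphi_u(w) \in I_n + [0, d/2^{n+1}]$; passing to the convex hull over all such $w \in W(n+1)$ yields the stated inclusion.

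For the length statement, I would introduce $I'_n \subseteq I_n$, the convex hull of $\{\varphi_u(w) : w \in C(n)\}$, and upgrade the previous calculation to $\l(I_{n+1}) \leq \tfrac{1}{2}(\l(I_n) + \l(I'_n)) + (d-1)/2^{n+1}$, using that the averages $(\varphi_u(w_1) + \varphi_u(w_2))/2$ lie in the interval $\tfrac{1}{2}(I_n + I'_n)$ of length $\tfrac{1}{2}(\l(I_n) + \l(I'_n))$. For $n \in \mathcal{S}$, $|C(n)| = c_n = 1$ forces $\l(I'_n) = 0$, so $\l(I_{n+1}) \leq \tfrac{1}{2}\l(I_n) + (d-1)/2^{n+1}$ (effective halving modulo a vanishing error); for other $n$, $\l(I'_n) \leq \l(I_n)$ gives only $\l(I_{n+1}) \leq \l(I_n) + (d-1)/2^{n+1}$. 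Enumerating $\mathcal{S} = \{s_1 < s_2 < \cdots\}$ (infinite by Proposition \ref{prop:define}) and chaining the two recursions, the cumulative growth over $[s_k+1, s_{k+1}-1]$ is bounded by $\sum_{j \geq s_k+1} (d-1)/2^{j+1} = O(2^{-s_k})$, producing the master recursion $\l(I_{s_{k+1}+1}) \leq \tfrac{1}{2}\l(I_{s_k+1}) + O(2^{-s_k})$. Standard iteration of a contraction with vanishing driving term then gives $\l(I_{s_k+1}) \to 0$, and since $\l$ grows only by $O(2^{-s_k})$ between consecutive elements of $\mathcal{S}$, we conclude $\l(I_n) \to 0$ globally.

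For the common limit, the first inclusion forces $a_{n+1} \geq a_n$; since $a_n \in [0,1]$ is bounded, $\{a_n\}$ converges monotonically to some $L$, and then $b_n = a_n + \l(I_n) \to L$ as well, so any $\{x_n\}$ with $x_n \in [a_n, b_n]$ is squeezed to $L$. The main obstacle will be ensuring that the linearly accumulating errors over the intervals between consecutive elements of $\mathcal{S}$ do not overwhelm the exponential halving at elements of $\mathcal{S}$; this is resolved by the geometric summability $\sum 2^{-n} < \infty$, which makes the errors between halvings uniformly small, so regardless of the spacing of $\mathcal{S}$, the halving steps drive the length to zero.
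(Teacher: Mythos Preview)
Your proof is correct and follows essentially the same strategy as the paper's: decompose $w\in W(n+1)$ as $w_0w_1$ with $w_0\in W(n)$, $w_1\in C(n)\subseteq W(n)$ and apply Lemma~\ref{lem:add} to get the inclusion; derive the two length recursions $\l(I_{n+1})\le \l(I_n)+d/2^{n+1}$ (always) and $\l(I_{n+1})\le \tfrac12\l(I_n)+d/2^{n+1}$ (for $n\in\mathcal S$); and conclude the common limit from monotonicity of $a_n$ plus $\l(I_n)\to 0$. The only differences are organisational: you introduce $I'_n$ to package the refined recursion (the paper just observes that $C(n)=\{v\}$ fixes the second average term), and you handle the convergence of $\l(I_n)$ via a master recursion along the subsequence $\mathcal S=\{s_1<s_2<\cdots\}$, whereas the paper proves directly by induction that $\l(I_{n+1})\le 2^{-k_n}\l(I_{n_0})+d/2^{n_0}$ with $k_n=|\mathcal S\cap[n_0,n]|$. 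Both packagings yield the same conclusion for the same reason.
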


\begin{proof}
Consider any $w\in W(n+1)$. Since $W(n+1)=W(n)C(n)\subseteq W(n)^2$ we can write $w=w_0w_1$ where $w_0,w_1\in W(n)$. Thus $$\varepsilon := \Phi_u(w) - \Phi_u(w_0) - \Phi_u(w_1) \in [0,d]$$ by Lemma \ref{lem:add} and \begin{align*}
\varphi_u(w)=\frac{\Phi_u(w)}{2^{n+1}} & =  \frac{1}{2}\left(\frac{\Phi_u(w_0)}{2^n}+\frac{\Phi_u(w_1)}{2^n} + \frac{\varepsilon}{2^n}\right) \\ & =  \frac{\varphi_u(w_0)+\varphi_u(w_1)}{2} + \frac{\varepsilon}{2^{n+1}}\in I_n + \left[0,\frac{d}{2^{n+1}}\right].
\end{align*}

Suppose that $n\in \mathcal{S}$, so $C(n)=\{v\}$ is a singleton. Then any $w\in W(n+1)$ takes the form $w=w_0v$ for some $w_0\in W(n)$ and 
\begin{align*}
\varphi_u(w)=\frac{\Phi_u(w)}{2^{n+1}} & =  \frac{1}{2}\left(\frac{\Phi_u(w_0)}{2^n}+\frac{\Phi_u(v)}{2^n} + \frac{\varepsilon}{2^n}\right) \\ & =  \frac{\varphi_u(w_0)}{2}+\frac{\varphi_u(v)}{2} + \frac{\varepsilon}{2^{n+1}} \\ & \in   \frac{1}{2}I_n+\left[0,\frac{d}{2^{n+1}}\right] + \frac{\varphi_u(v)}{2}.
\end{align*}
(As before, $\varepsilon = \Phi_u(w) - \Phi_u(w_0)-\Phi_u(v)$.)
Denote $\Delta_n:=\l(I_n)$ and notice that by the above,

\begin{align}
    \Delta_{n+1}\leq \Delta_n+\frac{d}{2^{n+1}}\ \text{for all}\ n \\
    \Delta_{n+1}\leq \frac{1}{2}\Delta_n+\frac{d}{2^{n+1}}\ \text{for all}\ n\in \mathcal{S}.
\end{align}
Fix some $n_0$. We prove by induction that for all $n\geq n_0$, it holds that $$\Delta_{n+1}\leq \frac{1}{2^{k_n}} \Delta_{n_0} + d\left(\frac{1}{2^{n_0+1}}+\cdots+\frac{1}{2^{n+1}}\right)$$
where $k_n=|\mathcal{S} \cap [n_0,n]|$. For $n=n_0$, if $n_0\notin \mathcal{S}$ then we have $k_n=0$ so the claim holds by (1) above; and if $n_0\in \mathcal{S}$ then $k_n=1$ so the claim holds by (2) above. Suppose that the claim holds for some $n\geq n_0$. If $n+1\notin  \mathcal{S}$, then we have by (1) that $$\Delta_{n+2}\leq \Delta_{n+1}+\frac{d}{2^{n+2}}\leq \frac{1}{2^{k_n}}\Delta_{n_0}+d\left(\frac{1}{2^{n_0+1}}+\cdots+\frac{1}{2^{n+1}}\right) + \frac{d}{2^{n+2}}$$ which is the required inequality, as $k_n=k_{n+1}$. If $n+1\in \mathcal{S}$ then by (2) we have that
\begin{align*}
\Delta_{n+2}\leq \frac{1}{2}\Delta_{n+1}+\frac{d}{2^{n+2}} & \leq  \frac{1}{2}\left(\frac{1}{2^{k_n}}\Delta_{n_0}+d\left(\frac{1}{2^{n_0+1}}+\cdots+\frac{1}{2^{n+1}}\right)\right) + \frac{d}{2^{n+2}} \\ & \leq  \frac{1}{2^{{k_n}+1}}\Delta_{n_0}+d\left(\frac{1}{2^{n_0+1}}+\cdots+\frac{1}{2^{n+2}}\right)
\end{align*}
as claimed (notice that $k_{n+1}=k_n+1$ in this case).
We conclude that $$\Delta_{n+1}\leq \frac{1}{2^{k_n}}\Delta_{n_0}+d\left(\frac{1}{2^{n_0+1}}+\cdots+\frac{1}{2^{n+2}}\right) \leq \frac{1}{2^{k_n}}\Delta_{n_0}+\frac{d}{2^{n_0}}$$ for all $n\geq n_0$.
Let $\delta>0$ be given. Fix $n_0$ such that $\frac{d}{2^{n_0}}<\frac{\delta}{2}$. Fix any $N\geq n_0$ large enough such that $k=|\mathcal{S}\cap [n_0,N]|$ satisfies $2^k > \Delta_{n_0}\cdot \frac{2}{\delta}$ (namely, $\frac{1}{2^k}\Delta_{n_0}<\frac{\delta}{2}$). Then by the claim we just proved,
$$\Delta_{N+1}\leq \frac{1}{2^{k_N}}\Delta_{n_0} + \frac{d}{2^{n_0}} < \frac{\delta}{2} + \frac{\delta}{2} = \delta.$$
Therefore, $\Delta_n\xrightarrow{n\rightarrow \infty} 0$.

Finally, fix a sequence $\{x_n\}_{n=1}^{\infty}$ where each $x_n\in I_n$. Write $I_n=[a_n,b_n]$ where $b_n=a_n+\Delta_n$; thus $a_n\leq x_n\leq a_n+\Delta_n$. Since $I_{n+1}\subseteq I_n+\left[0,\frac{d}{2^{n+1}}\right]$, we have that $a_{n+1}\geq a_n$ and the sequence $\{a_n\}_{n=1}^{\infty}$ is bounded and monotone non-decreasing, hence converges to some limit $L$. It follows that $\{a_n+\Delta_n\}_{n=1}^{\infty}$ converges to $L$ as well, since $\lim_{n\rightarrow \infty} \Delta_n = 0$. Hence $\lim_{n\rightarrow \infty} x_n = L$.
\end{proof}

\begin{lem} \label{lem:composition1}
Let $v$ be a (non-trivial) prefix of a word from some $W(t)$. Then $v\in W(m_1)\cdots W(m_s)$ for some $m_1>\cdots>m_s\geq 0$.
\end{lem}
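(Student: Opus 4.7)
The plan is to prove the statement by induction on $t$, with the key observation that if $|v| = 2^{m_1} + 2^{m_2} + \cdots + 2^{m_s}$ is the binary expansion of $|v|$ (so $m_1 > m_2 > \cdots > m_s \geq 0$), then the claim says precisely that $v$ factors along these blocks: first a word from $W(m_1)$, then from $W(m_2)$, and so on.

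For the base case $t = 0$, any non-trivial prefix of an element of $W(0) = \Sigma$ is a single letter, and hence itself lies in $W(0)$. For the inductive step, assume the claim holds for all prefixes of words from $W(t-1)$. Let $v$ be a non-trivial prefix of $w \in W(t)$. By construction $W(t) = W(t-1)C(t-1) \subseteq W(t-1)W(t-1)$, so we write $w = w_0 w_1$ with $w_0, w_1 \in W(t-1)$, each of length $2^{t-1}$. There are two cases. If $|v| \leq 2^{t-1}$, then $v$ is a prefix of $w_0 \in W(t-1)$ and the inductive hypothesis directly applies. If $|v| > 2^{t-1}$, then $v = w_0 v'$ where $v'$ is a non-empty prefix of $w_1 \in W(t-1)$.

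In the second case, if $v' = w_1$ (so $v = w$), take $s = 1$ and $m_1 = t$. Otherwise $v'$ is a strict prefix of $w_1$, so $|v'| < 2^{t-1}$; by induction $v' \in W(m'_1)\cdots W(m'_{s'})$ with $m'_1 > \cdots > m'_{s'} \geq 0$. The key step, and the only place the decreasing condition could fail, is verifying $t-1 > m'_1$: this is forced by the length identity $|v'| = 2^{m'_1} + \cdots + 2^{m'_{s'}} \geq 2^{m'_1}$ combined with $|v'| < 2^{t-1}$, which yields $m'_1 \leq t-2$. Thus $v = w_0 v' \in W(t-1) W(m'_1)\cdots W(m'_{s'})$ with strictly decreasing exponents $t-1 > m'_1 > \cdots > m'_{s'} \geq 0$, completing the induction.

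The only delicate point is the strict decrease of the exponent chain at the junction between $W(t-1)$ and the decomposition of $v'$; once the length bookkeeping is used to rule out $m'_1 = t-1$, the rest is bookkeeping. No further hard step is anticipated.
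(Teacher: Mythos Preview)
Your proof is correct and follows essentially the same route as the paper's: both use the recursive structure $W(t)=W(t-1)C(t-1)\subseteq W(t-1)^2$ and length-bookkeeping via the binary expansion to force the exponent chain to be strictly decreasing. The only organizational difference is that the paper inducts on $|v|$ and immediately peels off the block of length $2^{m_1}$ (where $m_1$ is the top bit of $|v|$) using the unrolled form $W(m_1)=W(0)C(0)\cdots C(m_1-1)$, whereas you induct on $t$ and descend one level at a time; both arrive at the same decomposition.
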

\begin{proof}
By induction on $|v|$. For $|v|=1$ the statement is evident. Write $$|v|=2^{m_1}+\cdots+2^{m_s},\ \ \ m_1>\cdots>m_s\geq 0$$ (binary expansion.) Write $v=v' v''$ where $|v'|=2^{m_1}$. Then $$v'\in W(0)C(0)C(1)\cdots C(m_1-1) =  W(m_1)$$ and $v''$ is a prefix of a word from $C(m_1)C(m_1+1)\cdots C(t-1)$. Since $2^{m_2}+\cdots+2^{m_s} < 2^{m_1}$, we have that $|v''| < 2^{m_1} \leq |v|$ so in fact $v''$ is a prefix of a word from $C(m_1)\subseteq W(m_1)$, and we can apply the induction hypothesis to $v''$. Hence $v''\in W(m_2)\cdots W(m_s)$ and altogether $v=v' v''\in W(m_1)\cdots W(m_s)$.
\end{proof}

\begin{lem} \label{lem:composition2}
Let $v$ be a (non-trivial) suffix of a word from some $W(t)$. Then $v\in W(n_1)\cdots W(n_r)$ for some $0\leq n_1<\cdots<n_r$.
\end{lem}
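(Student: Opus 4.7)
The plan is to prove this as the mirror image of Lemma \ref{lem:composition1}, exploiting the recursion $W(k+1) = W(k)C(k)$ together with the containment $C(k)\subseteq W(k)$. The key structural observation is that every $w \in W(t)$ factors canonically as $w = w' c$ with $w' \in W(t-1)$ and $c \in C(t-1) \subseteq W(t-1)$, each of length $2^{t-1}$; in other words, the largest of the ``building blocks'' in $w$ sits at the \emph{end}, whereas in the prefix decomposition of Lemma \ref{lem:composition1} the largest block sits at the beginning. This swap is precisely what forces the ordering $n_1 < \cdots < n_r$ to be the opposite of the ordering $m_1 > \cdots > m_s$ in the prefix case.

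I would then proceed by induction on $|v|$. The base case $|v| = 1$ is immediate: $v \in \Sigma = W(0)$, so $r=1, n_1=0$. For the inductive step, suppose $v$ is a nontrivial suffix of some $w \in W(t)$. If $|v| \leq 2^{t-1}$, then $v$ is a suffix of $c \in C(t-1) \subseteq W(t-1)$, and we may decrement $t$; this iteration terminates at $t = \lceil \log_2 |v| \rceil$, so we may assume $|v| > 2^{t-1}$. If $|v| = 2^t$, we take $r = 1$, $n_1 = t$. Otherwise $2^{t-1} < |v| < 2^t$; writing $|v| = 2^{n_1} + \cdots + 2^{n_r}$ in binary forces $n_r = t-1$. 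Split $v = v' v''$ with $|v''| = 2^{t-1}$. Then $v''$ consists of the last $2^{t-1}$ letters of $w$, which are exactly $c \in W(t-1) = W(n_r)$, while $v'$ is a suffix of $w' \in W(t-1)$ of length $|v| - 2^{t-1} < 2^{t-1}$. Applying the induction hypothesis to $v'$ gives $v' \in W(n_1) \cdots W(n_{r-1})$; uniqueness of binary expansion ensures $n_{r-1} \leq t-2 < n_r$, and concatenation yields $v = v' v'' \in W(n_1) \cdots W(n_r)$.

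I do not anticipate a substantive obstacle beyond careful bookkeeping, as the argument is essentially the symmetric counterpart of Lemma \ref{lem:composition1}. The only subtle point is that one must first reduce to the minimal $t$ with $|v| > 2^{t-1}$ before performing the split: otherwise the rightmost block $c$ of length $2^{t-1}$ would be longer than $v$ itself, and one could not peel off $v''$ corresponding to the top binary digit of $|v|$. Once this preliminary reduction is carried out, the induction runs in parallel with the prefix case, with ``first'' and ``last'' exchanged throughout.
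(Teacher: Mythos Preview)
Your argument is correct and essentially identical to the paper's: both proceed by induction on $|v|$, first reduce to the minimal $t$ with $2^{t-1}<|v|\le 2^t$ (the paper phrases this as ``take $t$ minimal'', you phrase it as ``decrement $t$''), then peel off the rightmost block $v''=c\in C(t-1)\subseteq W(t-1)$ and apply the induction hypothesis to the remaining suffix $v'$ of a word in $W(t-1)$.
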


\begin{proof}
By induction on $|v|$. For $|v|=1$ the statement is evident. Suppose that $v$ is a suffix of a word $z\in W(t)$ where $t$ is minimal. If $|v|\leq 2^{t-1}$ then $v$ is a suffix of a word from $W(t-1)$, as $W(t)=W(t-1)C(t-1)$ and $C(t-1)\subseteq W(t-1)$, contradicting the minimality of $t$. Hence $|v|>2^{t-1}$ so we can write $$|v|=2^{n_1}+\cdots+2^{n_r},\ \ \ 0\leq n_1<\cdots<n_r=t-1$$ 
(if $|v|=2^t$ then $v=z\in W(t)$ and we are done) 
and $v=v' v''$ where $v''\in C(t-1)\subseteq W(t-1)$ and $v'$ is a suffix of a word from $W(t-1)$. By the induction hypothesis applied to $v'$, we know that $v'\in W(n_1)\cdots W(n_{r-1})$ and altogether $$v = v' v'' \in W(n_1)\cdots W(n_{r-1})W(t-1) = W(n_1)\cdots W(n_{r-1})W(n_r),$$
as claimed.
\end{proof}

\begin{lem} \label{lem:composition}
Every finite factor $w$ of $\mathcal{X}$ takes the form $w=u_1\cdots u_rw_1\cdots w_s$ where $u_i\in W(n_i),\ w_j\in W(m_j)$ and $n_1 < \cdots < n_r$ and $m_1 > \cdots  > m_s$.
\end{lem}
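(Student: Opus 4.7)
The plan is to cut $w$ at a suitable midpoint of a high-level block of the construction, producing a suffix piece and a prefix piece of words from a common $W(t-1)$; the result then falls out directly from Lemmas \ref{lem:composition1} and \ref{lem:composition2}, applied to the two halves.

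More concretely, by the definition of $\mathcal{X}$ every finite factor $w$ is a factor of some word in $\bigcup_{k \geq 0} W(k)$, and I would first take $t$ to be the minimal non-negative integer for which $w$ factors some element of $W(t)$. If $t=0$ then $w \in W(0)$ and the conclusion is immediate with $r=1$, $s=0$. For $t \geq 1$, the construction gives $W(t) = W(t-1)C(t-1) \subseteq W(t-1)\cdot W(t-1)$, so any $z \in W(t)$ with $w$ as a factor splits as $z = z' z''$ with both $z', z'' \in W(t-1)$, each of length $2^{t-1}$. By the minimality of $t$, the word $w$ does not factor any element of $W(t-1)$; in particular it cannot lie entirely within $z'$ or within $z''$, so $w$ must straddle the junction between them. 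This lets me write $w = v_L v_R$ where $v_L$ is a non-trivial suffix of $z' \in W(t-1)$ and $v_R$ is a non-trivial prefix of $z'' \in W(t-1)$.

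From this straddling decomposition, applying Lemma \ref{lem:composition2} to $v_L$ yields $v_L = u_1 \cdots u_r$ with $u_i \in W(n_i)$ and $n_1 < \cdots < n_r$, while applying Lemma \ref{lem:composition1} to $v_R$ yields $v_R = w_1 \cdots w_s$ with $w_j \in W(m_j)$ and $m_1 > \cdots > m_s$. Concatenating the two decompositions produces the required form $w = u_1 \cdots u_r w_1 \cdots w_s$.

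The only delicate step is the minimality argument forcing $w$ to straddle the midpoint: without it $w$ would fit entirely inside a single factor from $W(t-1)$, contradicting the choice of $t$. Once this is in place the two preceding lemmas mechanically produce the strictly increasing, then strictly decreasing, decomposition of the two halves, and no further computation is needed.
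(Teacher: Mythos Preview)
Your proposal is correct and follows essentially the same approach as the paper's own proof: take $t$ minimal, split the ambient word from $W(t)$ at its $W(t-1)$-midpoint, use minimality to force $w$ to straddle the junction, and then apply Lemmas~\ref{lem:composition2} and~\ref{lem:composition1} to the suffix and prefix pieces respectively. The paper frames the argument as an induction on $t$, but the inductive hypothesis is never actually invoked---only the minimality of $t$ is used---so your direct presentation is in fact slightly cleaner.
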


\begin{proof}
By induction on the minimum number $t$ such that $w$ factors a word from $W(t)$.
For $t=0$ (and even $t=1$) the statement is evident. For the induction step, suppose $w$ factors $v\in W(t)=W(t-1)C(t-1)$. Breaking $v=v_0v_1$ with $v_0\in W(t-1),\ v_1\in C(t-1)\subseteq W(t-1)$, we may assume that $v_0=v_{00}v_{01},\ v_1=v_{10}v_{11}$ such that $w=v_{01}v_{10}$, for otherwise $w$ factors either $v_0$ or $v_1$, contradicting the minimality of $t$. 
By Lemma \ref{lem:composition1}, we know that $v_{10}\in W(m_1)\cdots W(m_s)$ for some $m_1>\cdots>m_s\geq 0$, and by Lemma \ref{lem:composition2}, we know that $v_{01} \in W(n_1)\cdots W(n_r)$ for some $0\leq n_1<\cdots<n_r$. The lemma is proved.
\end{proof}

\begin{prop} \label{prop:frequencies}
Let $u\in \Sigma^d$ be a finite word. Then there exists some $L\in [0,1]$ such that for every $\varepsilon>0$ there exists some $N_0$ such that for every $N\geq N_0$ and for every factor $\xi$ of $\mathcal{X}$ of length $N$, we have that $|\varphi_u(\xi)-L|<\varepsilon$.
\end{prop}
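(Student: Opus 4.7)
The plan is to take $L$ to be the common limit of $\varphi_u$ on the levels $W(n)$ supplied by Lemma \ref{lem:frequencies for W}, and then to leverage the decomposition of arbitrary factors of $\mathcal{X}$ provided by Lemma \ref{lem:composition}. The point is that, aside from a bounded-length remainder, every factor of $\mathcal{X}$ is concatenated from blocks living in $W(n)$'s with large level, whose $u$-frequencies are forced into the shrinking interval $I_n$; this lets us pin the frequency $\varphi_u(\xi)$ close to $L$ uniformly in $\xi$.

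More precisely, given $\varepsilon > 0$, I would first fix $n_0$ large enough that $I_n \subseteq [L - \varepsilon/4, L + \varepsilon/4]$ for every $n \geq n_0$; such $n_0$ exists since, by Lemma \ref{lem:frequencies for W}, both endpoints of $I_n$ tend to $L$. For an arbitrary factor $\xi$ of length $N$, Lemma \ref{lem:composition} writes $\xi = u_1 \cdots u_r w_1 \cdots w_s$ with $u_i \in W(n_i)$, $w_j \in W(m_j)$, $n_1 < \cdots < n_r$ and $m_1 > \cdots > m_s$. The key structural observation is that strict monotonicity of the indices forces each level $W(k)$ to contribute at most twice (once among the $u_i$'s and once among the $w_j$'s), so the total length of blocks with level below $n_0$ is at most $2\sum_{k < n_0} 2^k < 2^{n_0 + 1}$, a constant depending only on $\varepsilon$; similarly $r + s \leq 2(\log_2 N + 1)$.

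The last step applies Lemma \ref{lem:add} iteratively across the $r + s - 1$ concatenation boundaries, yielding
\[
\sum_k \Phi_u(\xi_k) \leq \Phi_u(\xi) \leq \sum_k \Phi_u(\xi_k) + (r + s - 1)(d - 1),
\]
where $\xi_k$ ranges over the blocks of the decomposition. I would then split this sum into \emph{large} blocks (level $\geq n_0$), for which $\Phi_u(\xi_k) = \varphi_u(\xi_k) \cdot |\xi_k|$ with $\varphi_u(\xi_k) \in [L - \varepsilon/4, L + \varepsilon/4]$, and \emph{small} blocks, which contribute at most their total length $< 2^{n_0 + 1}$. Dividing by $N$, the small-block contribution and the boundary term $(r + s - 1)(d - 1) = O(d \log N)$ both become negligible as $N \to \infty$; choosing $N_0$ so that each is below $\varepsilon/4$ for $N \geq N_0$ delivers $|\varphi_u(\xi) - L| < \varepsilon$ uniformly in $\xi$.

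I do not foresee a major obstacle: Lemma \ref{lem:frequencies for W} does the real work by synchronizing the frequencies across all levels to a single $L$, and Lemma \ref{lem:composition} supplies the combinatorial handle on general factors. The one subtlety to keep track of is exactly the monotonicity point above --- without strict monotonicity of the indices the ``small remainder'' could grow with $N$ and swamp the main term --- but once this is recognized, the estimate is mechanical.
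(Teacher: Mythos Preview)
Your proposal is correct and follows essentially the same argument as the paper: take $L$ from Lemma~\ref{lem:frequencies for W}, decompose an arbitrary factor via Lemma~\ref{lem:composition}, use the strict monotonicity of the indices to bound both the total length of low-level blocks by $2^{n_0+1}$ and the number of blocks by $O(\log N)$, and combine with Lemma~\ref{lem:add} to control the frequency. The paper's proof differs only in cosmetic choices (using $\varepsilon/3$ thresholds and writing out the upper and lower estimates separately).
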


\begin{proof}
Consider $L$ from Lemma \ref{lem:frequencies for W}. Let $\varepsilon>0$ be given. By Lemma \ref{lem:frequencies for W}, there exists some $t$ such that if $k\geq t$ and $z\in W(k)$ then $|\varphi_u(z)-L|<\varepsilon/3$. Take $N_0\geq t$ large enough such that for every $N\geq N_0$: \begin{align*}
& \frac{2^{t+1}}{N}<\varepsilon/3\ \ \text{and} \\
& \frac{2\log_2 N + 1}{N} d < \varepsilon/3.
\end{align*}

Now consider any factor $\xi$ of $\mathcal{X}$ of length $N\geq N_0$. By Lemma \ref{lem:composition}, we can write $$\xi=u_1\cdots u_r w_1\cdots w_s$$ where $u_i\in W(n_i),\ w_j\in W(m_j)$ and $n_1<\cdots<n_r,\ m_1>\cdots>m_s$. Thus $N=2^{n_1}+\cdots+2^{n_r}+2^{m_1}+\cdots+2^{m_s}$.
Recall that $\varphi_u(\xi)=\frac{\Phi_u(\xi)}{N}$ and by Lemma \ref{lem:add},
\begin{align} \label{eq:add}
\frac{1}{N}\left(\sum_{i=1}^r \Phi_u(u_i)+\sum_{j=1}^s \Phi_u(w_j)\right) & \leq   \varphi_u(\xi) \nonumber \\ & \leq  \frac{1}{N}\left(\sum_{i=1}^r \Phi_u(u_i)+\sum_{j=1}^s \Phi_u(w_j)\right) + \frac{(r+s-1)d}{N}  
\end{align}

Hence
\begin{align*}
\varphi_u(\xi) & \geq  \frac{1}{N}\left(\sum_{\substack{i=1 \\ n_i\geq t}}^r \Phi_u(u_i)+\sum_{\substack{j=1 \\ m_j\geq t}}^s \Phi_u(w_j)\right) \\ & =  \frac{1}{N}\left(\sum_{\substack{i=1 \\ n_i\geq t}}^r 2^{n_i}\varphi_u(u_i)+\sum_{\substack{j=1 \\ m_j\geq t}}^s 2^{m_j}\varphi_u(w_j)\right) \\ & \geq  \frac{1}{N}\left(\sum_{\substack{i=1 \\ n_i\geq t}}^r 2^{n_i}(L-\varepsilon/3)+\sum_{\substack{j=1 \\ m_j\geq t}}^s 2^{m_j}(L-\varepsilon/3)\right) \\ & =  
\frac{L-\varepsilon/3}{N}\left(\sum_{i=1}^r 2^{n_i} - \sum_{\substack{i=1 \\ n_i<t}}^t 2^{n_i} + \sum_{j=1}^s 2^{m_j} - \sum_{\substack{j=1 \\ m_j<t}}^t 2^{m_j}  \right) \\ & = \frac{L-\varepsilon/3}{N}\left(N-\sum_{\substack{i=1 \\ n_i<t}}^t 2^{n_i} - \sum_{\substack{j=1 \\ m_j<t}}^t 2^{m_j}\right) \\ & \geq \frac{L-\varepsilon/3}{N}\left(N-2^t-2^t\right) \\ & = (L-\varepsilon/3)\left(1-\frac{2^{t+1}}{N}\right) \\ & = (L-\varepsilon/3)(1 - \varepsilon/3) =  L -\varepsilon/3- L \varepsilon/3 +\varepsilon^2/9 \geq L - 2\varepsilon/3.
\end{align*}

We now turn to give an upper bound on $\varphi_u(\xi)$. Recall that $n_1,\dots,n_r$ are distinct and $2^{n_1}+\cdots+2^{n_r}\leq N$, and that $m_1,\dots,m_s$ are distinct and $2^{m_1}+\cdots+2^{m_s}\leq N$, and therefore $r,s\leq \log_2 N + 1$.
Moreover, by (\ref{eq:add}),

\begin{align*}
\varphi_u(\xi) & \leq  \frac{1}{N}\left(\sum_{i=1}^r \Phi_u(u_i)+\sum_{j=1}^s \Phi_u(w_j)\right) + \frac{(r+s-1)d}{N}   \\
& \leq  \frac{1}{N}\left(\sum_{i=1}^r \Phi_u(u_i)+\sum_{j=1}^s \Phi_u(w_j)\right) + \frac{(2\log_2 N + 1)d}{N} \\ & \leq  \frac{1}{N}\left(\sum_{i=1}^r \Phi_u(u_i)+\sum_{j=1}^s \Phi_u(w_j)\right) + \varepsilon/3 \\
&  =  \frac{1}{N}\left(\sum_{\substack{i=1 \\ n_i < t}}^r \Phi_u(u_i)+\sum_{\substack{j=1 \\ m_j < t}}^s \Phi_u(w_j)+\sum_{\substack{i=1 \\ n_i \geq t}}^r \Phi_u(u_i)+\sum_{\substack{j=1 \\ m_j\geq t}}^s \Phi_u(w_j)\right) + \varepsilon/3 \\ & \leq  \frac{1}{N}\left(\sum_{\substack{i=1 \\ n_i < t}}^r 2^{n_i}+\sum_{\substack{j=1 \\ m_j < t}}^s 2^{m_j}+\sum_{\substack{i=1 \\ n_i \geq t}}^r 2^{n_i}\varphi_u(u_i)+\sum_{\substack{j=1 \\ m_j\geq t}}^s 2^{m_j}\varphi_u(w_j)\right) + \varepsilon/3 \\ & \leq  \frac{1}{N}\left(2^t+2^t+(L+\varepsilon/3)\sum_{i=1}^r 2^{n_i}+(L+\varepsilon/3)\sum_{j=1}^s 2^{m_j}\right) \\ & \leq  \frac{2^{t+1}}{N}+L+\varepsilon/3 < L + \varepsilon.
\end{align*}

The proposition is proved.
\end{proof}

\begin{proof}[{Proof of Theorem \ref{thm:strictly}}]
By Proposition \ref{prop:frequencies}, every finite word over $\Sigma$ has a well-defined frequency in $\mathcal{X}$. It follows that $\mathcal{X}$ is minimal and uniquely ergodic, and has the desired complexity.
\end{proof}

\section{Simple algebras and the filter dimension}

Let $\mathbbm{k}$ be a field. Let $A$ be a finitely generated (associative, not necessarily commutative) $\mathbbm{k}$-algebra generated by a finite-dimensional subspace $V$ containing $1$. Let $V^n$ be the $\mathbbm{k}$-subspace of $A$ spanned by all products of at most $n$ elements from $V$.
Thus
$$V\subseteq V^2\subseteq \cdots$$
is an exhausting filtration of $A=\bigcup_{n=1}^{\infty} V^n$.
The \emph{growth function} of $A$ with respect to $V$ is
$$\gamma_{A,V}(n) = \dim_{\mathbbm{k}} V^n.$$
This function depends on the choice of $V$, but only up to asymptotic equivalence. Recall that, for non-decreasing functions $f,g\colon \mathbb{N}\rightarrow \mathbb{N}$ we write $f\preceq g$ if $f(n)\leq C g(D n)$ for some $C,D>0$ and for all $n\gg 1$; and $f\sim g$ (asymptotically equivalent) if both $f\preceq g$ and $g\preceq f$.

The \emph{Gel'fand-Kirillov (GK) dimension} of $A$ is defined as
$$\GK(A) = \overline{\lim_{n\rightarrow \infty}} \frac{\log \gamma_{A,V}(n)}{\log n}$$
and is independent of the choice of generating subspace $V$. The GK-dimension represents the optimal degree of polynomial growth of $A$. This invariant has originated from the work of Gel'fand and Kirillov on skew-fields of fractions of universal enveloping algebras of algebraic Lie algebras. 
The GK-dimension is an important invariant in noncommutative algebra and noncommutative algebraic geometry, replacing the role of the classical Krull dimension in commutative algebra; indeed, in the commutative case, the GK-dimension coincides with the classical Krull dimension of $A$. For more on growth of algebras and the GK-dimension, see \cite{KL}.

\medskip

Now assume that $A$ is a simple algebra, that is, $A$ contains no non-zero proper two-sided ideals. This means that, for every $0\neq a\in A$, the ideal $AaA$ contains $1$ and so there exists some $r\in \mathbb{N}$ such that $1\in V^r a V^r$. Bavula defined the return function
$$Ret_{A,V}(n) = \min\{r\in \mathbb{N}\ \colon\ \forall a\in V^n,\ 1\in V^r a V^r\}$$
which is again a well-defined invariant of $A$ when considered up to asymptotic equivalence. The \emph{filter dimension} of $A$ is then defined to be
$$\fdim(A) = \overline{\lim}_{n\rightarrow \infty} \frac{\log Ret_{A,V}(n)}{\log n}$$
and is independent of the choice of $V$.

One of the most fundamental concepts in algebraic analysis is the notion of $D$-modules. In the simplest and most classical case, consider the $n$-th Weyl algebra $\mathcal{A}_n=\mathbb{C}\left<x_1,\dots,x_n,\partial_1,\dots,\partial_n\right>$ of polynomial vector fields on $\mathbb{A}^n$. This is a simple algebra of GK-dimension $2n$, and every non-zero module over it has GK-dimension at least $n$; this follows from Bernstein's Inequality. Modules of the smallest possible GK-dimension $n$ are \emph{holonomic} $D$-modules. The notion of filter dimension enables to establish general Bernstein-type inequalities for more flexible settings of modules over arbitrary finitely generated simple algebras. We refer the reader to \cite{Bav09,Bavula,Bav05,Bav98,Bav00,Bav08}.

\medskip

In the last part of the paper we address:

\begin{ques}[{\cite{Bavula}}]
Which numbers are the possible values of the filter dimension?
\end{ques}

We prove:

\begin{thm} \label{thm:main Bavula}
Let $\mathbbm{k}$ be an arbitrary field. For every real number $\alpha\geq 1$ there exists a finitely generated, $\mathbb{Z}$-graded, simple $\mathbbm{k}$-algebra $A$ whose filter dimension is $\alpha$.
\end{thm}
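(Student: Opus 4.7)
The plan is to apply Proposition \ref{prop:arbitrary rec} to fix a uniformly recurrent word $w$ over a finite alphabet $\Sigma$ with $p_w(n)\asymp n$ and $\overline{\lim}_{n\to\infty}\log_n Rec_w(n)=\alpha$, and then build from the minimal subshift $\mathcal{X}_w$ a finitely generated, $\mathbb{Z}$-graded, simple $\mathbbm{k}$-algebra $A$ of $\GK$-dimension $2$ equipped with a natural generating subspace $V$ such that $Ret_{A,V}(n)\asymp Rec_w(n)$. Applying $\overline{\lim}_{n\to\infty}\log_n$ to both sides then gives $\fdim(A)=\alpha$.

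The construction proceeds in two steps. First, attach to $w$ the monomial algebra $B_w=\mathbbm{k}\langle\Sigma\rangle/I_w$, where $I_w$ is the ideal generated by monomials that do not factor $w$; linear complexity of $w$ gives $\GK(B_w)=1$, but $B_w$ is not simple. To remedy this while preserving a grading, pass to a Steinberg-type algebra $A$ of the shift groupoid on $\mathcal{X}_w$, equivalently, adjoin an invertible degree-one generator implementing the shift and form an algebraic crossed product of $B_w$ by the induced $\mathbb{Z}$-action. Minimality of $\mathcal{X}_w$, which is equivalent to uniform recurrence of $w$, forces simplicity of $A$; the $\mathbb{Z}$-grading is the shift degree; and adjoining one invertible generator to a $\GK=1$ algebra raises the $\GK$-dimension to $2$. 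Then take $V\subseteq A$ to be the finite-dimensional subspace spanned by $1$, the symbols of $\Sigma$, and the shift generator together with its inverse. Elements of $V^n$ decompose as $\mathbbm{k}$-linear combinations of terms of the form $u e_i$, where $u$ is a factor of $w$ of length $O(n)$ and $e_i$ is a shift of magnitude $|i|=O(n)$. For such a basic element $a = u e_i$, the relation $1\in V^r a V^r$ translates combinatorially into the requirement that every factor of $w$ of length $\Theta(n+r)$ contain an occurrence of $u$, i.e., $Rec_w(\Theta(n))\leq \Theta(r)$. This yields $Ret_{A,V}(n)\asymp Rec_w(n)$, and hence $\fdim(A)=\alpha$.

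The main obstacle is tightening the correspondence $Ret_{A,V}(n)\asymp Rec_w(n)$ in both directions. The upper bound $Ret_{A,V}(n)\preceq Rec_w(n)$ follows once one knows that, whenever the monomial underlying $a$ appears in every long enough factor of $w$, the simple crossed-product structure supplies short two-sided multipliers recovering $1$. The lower bound is more delicate, because an arbitrary element $a\in V^n$ is a $\mathbbm{k}$-linear combination of many shifted monomials, and algebraic cancellations could in principle make $1$ reachable more cheaply than any combinatorial occurrence of a single factor would allow. Controlling this requires a careful use of the $\mathbb{Z}$-grading to decompose $a$ into homogeneous components, followed by an argument showing that the minimal return from each homogeneous piece is still dictated by recurrence in $w$, so no cancellation across degrees can shortcut the combinatorial bound. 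Once this reduction is in place, Proposition \ref{prop:arbitrary rec} yields the theorem for every $\alpha\geq 1$.
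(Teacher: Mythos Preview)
Your overall architecture is right --- take the convolution/Steinberg algebra of the minimal subshift $\mathcal{X}_w$, relate $Ret$ to $Rec_w$, and read off $\fdim$. But you have the direction of difficulty reversed, and this hides the one genuinely nontrivial ingredient.

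For the \emph{lower} bound $Ret_{A,V}(n)\succeq Rec_w(n)$ you get to \emph{choose} $a\in V^n$; taking $a=\ind_{\{0\}\times\{x:\,x[0,n-1]=u\}}$ for a factor $u$ with large recurrence constant, one checks directly (as in the paper's Lemma~\ref{lem:lower bound on Ret}) that any expression $1\in V^r a V^r$ forces every length-$(2r+n)$ window of $w$ to contain $u$. No cancellation issue arises: you just evaluate at a single point of the groupoid. Your worry about ``algebraic cancellations across degrees'' is misplaced here.

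The real obstacle is the \emph{upper} bound $Ret_{A,V}(n)\preceq n^\alpha$, because now $a\in V^n$ is an \emph{arbitrary} nonzero linear combination $\sum_i \alpha_i\ind_{\{d_i\}\times Z_i}$ with $|d_i|\leq n$, and you must produce short multipliers returning $1$. The paper's mechanism (Proposition~\ref{prop:upper bound Ret}) is to sandwich $a$ between characteristic functions of a cylinder $W$ so that all cross-terms vanish and one is left with a nonzero scalar times $\ind_{\{0\}\times W}$. For the off-diagonal terms ($d_j\neq k$) to die, one needs $W\cap T^d(W)=\emptyset$ for all $0<|d|\leq 2n$, i.e.\ the defining factor of $W$ must have no nontrivial period $\leq 2n$. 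This is \emph{not} a formal consequence of minimality or of the crossed-product structure; it is a quantitative aperiodicity statement about the specific word. In the paper this is exactly Lemma~\ref{lem:aperiodic}: the blocks $\alpha_k\beta_k$ have no period $\leq \widetilde{N}_k$, which lets one take $W$ of length $\asymp N_{l+1}\asymp n^\alpha$. Your sentence ``the simple crossed-product structure supplies short two-sided multipliers recovering $1$'' glosses over precisely this step, and without it the upper bound on $Ret$ does not go through.
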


Our algebras arise as convolution algebras of \'etale groupoids associated with minimal symbolic dynamical systems (see next sections). Such algebras are finitely generated, simple, $\mathbb{Z}$-graded and can be viewed as certain (non-Ore) localizations of monomial algebras. The filter dimensions of such algebras must be at least $1$. (In general, Bavula  \cite{Bavula} showed that $\fdim(A) \geq \frac{1}{2}$ for arbitrary finitely generated simple algebras.)

\begin{prop} \label{prop:prop}
    Let $R=\bigoplus_{i=0}^{\infty} R_i$ be a finitely generated, non-nilpotent, connected graded algebra. If $R\subseteq A = \bigoplus_{i\in \mathbb{Z}} A_i$ is a finitely generated simple algebra then $\fdim(A)\geq 1$.
\end{prop}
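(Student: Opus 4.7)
The plan is to lower-bound $Ret_{A,V}(n)$ linearly in $n$ along a subsequence of $n$'s going to infinity, which already suffices for $\fdim(A)\geq 1$ since $\fdim$ is defined as a $\limsup$. The mechanism is a degree-matching argument that uses the $\mathbb{Z}$-grading: a nonzero homogeneous element of positive degree $k$ lying in $V^{N}$ will force $Ret_{A,V}(N)\gtrsim k$.

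First I would make $V$ compatible with the grading. Because $\fdim(A)$ is independent of the choice of generating subspace, we may enlarge $V$ to contain a fixed finite graded generating set $U\subseteq R_+$ of $R$, and then replace $V$ by the linear span of its homogeneous components. So WLOG $V$ is a finite-dimensional \emph{graded} subspace of $A$ containing $1$ and $U$; set $d:=\max\{|\deg v|\co v\in V\text{ homogeneous}\}$. Because $R$ is generated by $U\subseteq R_+$, a direct induction gives the monomial expansion
\begin{equation*}
R_+^{m}\;=\;\sum_{K\geq m}U^{K},
\end{equation*}
where $U^{K}$ is the span of length-$K$ monomials in $U$. The non-nilpotency hypothesis yields $R_+^{m}\neq 0$ for every $m\geq 1$, so there is some $K=K(m)\geq m$ with $U^{K}\neq 0$. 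Pick a nonzero $p\in U^{K}\subseteq V^{K}$ and take a nonzero homogeneous component $p_0\in V^{K}\cap R_k$ (again in $V^{K}$, since $V^{K}$ is graded). As every element of $U$ has degree $\geq 1$, each length-$K$ monomial in $U$ has degree at least $K$, so $k\geq K\geq m$.

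The heart of the argument is the degree estimate: if $a\in V^{N}$ is nonzero homogeneous of positive degree $k$ and $1\in V^{r}aV^{r}$, then $r\geq k/(2d)$. Writing $1=\sum_i b_i\,a\,c_i$ with $b_i,c_i\in V^{r}$ and decomposing each $b_i,c_i$ into homogeneous pieces (using that $V^{r}$ is graded), the degree-$0$ part of the right-hand side must equal $1$; hence some term $b_{i,e}\,a\,c_{i,-e-k}$ is a nonzero element of $A_0$, with $|e|\leq dr$ and $|e+k|\leq dr$, giving $k\leq 2dr$ by the triangle inequality. Applying this to $a=p_0$ and $N=K(m)$ yields $Ret_{A,V}(K(m))\geq K(m)/(2d)$. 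Since $K(m)\to\infty$, we conclude
\begin{equation*}
\fdim(A)\;=\;\overline{\lim_{n\to\infty}}\,\frac{\log Ret_{A,V}(n)}{\log n}\;\geq\;\lim_{m\to\infty}\frac{\log\bigl(K(m)/(2d)\bigr)}{\log K(m)}\;=\;1.
\end{equation*}

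The main delicate point I foresee is translating the non-vanishing of $R_+^{m}$ into a homogeneous element of $V^{N}$ whose degree grows linearly in $N$; this is exactly what dictates the setup where $V$ is graded and absorbs a generating set $U$ of $R$, and the monomial expansion $R_+^{m}=\sum_{K\geq m}U^{K}$ is what synchronizes $V$-length with $R$-degree. A secondary subtlety is that $K(m)$ cannot be controlled from above, so $Ret_{A,V}(n)$ is only bounded below along the subsequence $\{K(m)\}$ rather than pointwise in $n$; but this is exactly what the $\limsup$ definition of $\fdim$ accommodates.
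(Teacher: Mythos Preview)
Your proof is correct and follows essentially the same degree-matching argument as the paper: pick a homogeneous element of large positive degree inside $V^N$ and observe that $1\in V^r a V^r$ forces $r\geq \deg(a)/(2d)$ by grading considerations. The paper's extraction of the witness is slightly more direct---it notes that non-nilpotency gives $R_n\neq 0$ for infinitely many $n$ and that $R_n\subseteq V^n$ automatically (since $R$ is connected and generated in bounded degree), so one can take $a_n\in R_n$ with degree exactly $n$---but the substance is the same.
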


\begin{proof}
Let $V=R_0+\cdots+R_d$ be a generating subspace of $R$ and extend it to some generating subspace of $A$, say, $W \subseteq A_{-e}+\cdots+A_e$. Since $R$ is non-nilpotent, $R_n\neq 0$ infinitely often. Fix some $0\neq a_n\in R_n$. Then $a_n\in V^n\subseteq W^n$. If 
\begin{eqnarray*} 
1\in W^ka_n W^k & \subseteq & \left(A_{-ke}+\cdots+A_{ke}\right)A_n\left(A_{-ke}+\cdots+A_{ke}\right) \\ & \subseteq & A_{n-2ke}+\cdots+A_{n+2ke} 
\end{eqnarray*} then $n-2ke\leq 0$ so $k\geq \frac{n}{2e}$. It follows that $Ret_A(n)\geq \frac{n}{2e}$ infinitely often, so $\fdim(A) \geq 1$.
\end{proof}

Therefore, Theorem \ref{thm:main Bavula} characterizes all possible values of the filter dimension of simple $\mathbb{Z}$-graded algebras containing non-nilpotent $\mathbb{N}$-graded subalgebras as described in Proposition \ref{prop:prop}. (Notice that if we drop the subalgebra assumption then \emph{any} simple algebra $A$ becomes graded by putting $A_0=A,\ A_i=0\ \forall i\neq 0$.)

\section{Convolutions algebras}

\subsection{Subshifts and \'etale groupoids}
Convolution algebras of groupoids, also known as Steinberg algebras, have originate from works of Connes, Nekrashevych, Steinberg and others \cite{BCFS,Con82,Nek,Ste10}. The simplicity of convolution algebras is an important and interesting problem which has been extensively studied in various contexts, e.g. \cite{BCFS,CE,Nek,Ste10,Ste16,SS1,SS2}. Here we focus on a special family of convolution algebras associated with the groupoids of the actions $\mathbb{Z}\curvearrowright \mathcal{X}$ for subshifts $\mathcal{X}$. This family has been utilized by Nekrashevych in \cite{Nek} to construct the first examples of finitely generates simple algebras of arbitrary GK-dimensions.

Given a subshift $\mathcal{X}\subseteq \Sigma^{\mathbb{Z}}$, one forms the \'etale groupoid of the action $\mathfrak{G}_\mathcal{X}$ (if $\mathcal{X}=\mathcal{X}_w$ is the subshift generated by a bi-infinite word $w$ we simply denote $\mathfrak{G}_{\mathcal{X}}$ by $\mathfrak{G}_w$). This is a totally disconnected, compact Hausdorff groupoid whose underlying set is $\mathbb{Z} \times \mathcal{X}$ equipped with the product topology, and partial composition is given by
$$(n,T^m(x))\cdot (m,x) = (n+m,x)$$
and the inverse
$$(n,x)^{-1}=(-n,T^n(x)).$$

Fix an arbitrary base field $\mathbbm{k}$, endowed with the discrete topology.
The \emph{convolution algebra} over $\mathbbm{k}$ associated with $\mathcal{X}\subseteq \Sigma^{\mathbb{Z}}$ is
$$\mathbbm{k}[\mathfrak{G}_\mathcal{X}] = \{f\colon \mathfrak{G}_\mathcal{X}\rightarrow \mathbbm{k}\ \colon\ f\ \text{continuous, compactly supported}\}$$
(here $\mathbbm{k}$ is equipped with the discrete topology) where the ring operation is given by convolution
$$(f_1 * f_2)(g) = \sum_{\substack{h_1,h_2\in \mathfrak{G}_\mathcal{X} \\ h_1h_2=g}} f_1(h_1)f_2(h_2).$$
Notice that the characteristic function $\mathbbm{1}_{\{0\}\times \mathcal{X}}$ is the identity of this algebra, and will be occasionally denoted by $1$. 

\subsection{Generators and filtrations}

The convolution algebra is generated by the following characteristic functions:
\begin{eqnarray*}
    \mathbbm{1}_T & \ \  \text{of the set} \ \ & \{1\}\times \mathcal{X} \\
    \mathbbm{1}_{T}^{-1} & \ \  \text{of the set} \ \ & \{-1\}\times \mathcal{X} \\
    \mathbbm{1}_s & \ \  \text{of the set} \ \ & \{(0,x)\ \colon\ x\in \mathcal{X},\ x[0]=s\}
\end{eqnarray*}
for each $s\in \Sigma$. 

\begin{fact}
Given $p,q\in \mathbb{Z}$ and compact open subsets $A,B\subseteq \mathcal{X}$, we have $$\ind_{\{p\}\times A} * \ind_{\{q\}\times B} = \ind_{\{p+q\}\times \left(B\cap T^{-q}(A)\right)}.$$ Indeed, if $(d,x)=(p,a)\cdot (q,b)$ then $d=p+q$ and $a=T^q(b)$ and $x=b$. Hence the support of the convolution on the left hand side is contained in $\{p+q\}\times \left(B\cap T^{-q}(A)\right)$ and conversely, for every $b\in B\cap T^{-q}(A)$ we can write $(p+q,b)=(p,T^q(b))\cdot (q,b)$, uniquely given $p,q$.
In particular,
$\mathbbm{1}_T * \mathbbm{1}_{T}^{-1} = \mathbbm{1}_{T}^{-1} * \mathbbm{1}_T = 1$. Further notice that $\sum_{s\in\Sigma} \mathbbm{1}_s = 1$. Moreover,
$$\ind_{s} * \ind_{T}^{* d} = \ind_{\{0\}\times \{x\in \mathcal{X}\ \colon\ x[0]=s\}} * \ind_{\{d\}\times \mathcal{X}} = \ind_{\{d\}\times \{x\in \mathcal{X}\ \colon\ x[d]=s\}}$$
since 
\begin{eqnarray*}
T^{-d}(\{x\in \mathcal{X}\ \colon\ x[0]=s\}) & = & \{T^{-d}(x)\in \mathcal{X}\ \colon\ x[0]=s\} \\ & = & \{x\in \mathcal{X}\ \colon\ T^d(x)[0]=s\}=\{x\in \mathcal{X}\ \colon\ x[d]=s\}.
\end{eqnarray*}
and
\begin{eqnarray*}
\ind_T^{* (-d)} * \ind_s * \ind_T^{* d} & = & \ind_{\{-d\}\times \mathcal{X}} * \ind_{\{d\}\times \{x\in \mathcal{X}\ \colon\ x[d]=s\}} \\ & = & \ind_{\{0\}\times \{x\in \mathcal{X}\ \colon\ x[d]=s\}}.
\end{eqnarray*}
\end{fact}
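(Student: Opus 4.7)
The plan is to unpack the definition of convolution and the groupoid composition law, establish the master formula for indicators of boxes $\{p\}\times A$, and then derive every subsequent identity as a one-line instance of that master formula.

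First I would compute $(\ind_{\{p\}\times A} * \ind_{\{q\}\times B})(d,x)$ directly. By definition this sum ranges over pairs $(h_1,h_2) \in \mathfrak{G}_\mathcal{X}\times \mathfrak{G}_\mathcal{X}$ with $h_1 h_2 = (d,x)$. Using the partial composition law $(n, T^m z)\cdot(m,z) = (n+m,z)$, any such factorization is parameterized uniquely by the choice of the second factor's time coordinate $m$: we must have $h_2 = (m,x)$ and $h_1 = (d-m, T^m x)$. Plugging these in, the two indicators are simultaneously nonzero exactly when $d-m = p$, $m = q$, $T^m x \in A$, and $x \in B$. This forces $d = p+q$ and $x \in B \cap T^{-q}(A)$, and on this set the convolution takes value $1$, giving the claimed equality of indicator functions.

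Next I would harvest the listed consequences. For $\ind_T * \ind_T^{-1}$ and $\ind_T^{-1}*\ind_T$, I apply the master formula with $A=B=\mathcal{X}$ and $(p,q) = (\pm 1,\mp 1)$, obtaining $\ind_{\{0\}\times \mathcal{X}} = 1$ since $\mathcal{X}$ is $T$-invariant. The identity $\sum_{s\in\Sigma} \ind_s = 1$ is simply the observation that the cylinder sets $\{x : x[0]=s\}$ partition $\mathcal{X}$, so their characteristic functions sum to $\ind_{\{0\}\times \mathcal{X}}$. For $\ind_s * \ind_T^{*d}$, I first note (by induction on $d$, using the master formula with $A=B=\mathcal{X}$) that $\ind_T^{*d} = \ind_{\{d\}\times \mathcal{X}}$; applying the master formula with $p=0$, $q=d$, $A = \{x : x[0]=s\}$, $B=\mathcal{X}$ gives $\ind_{\{d\}\times T^{-d}(A)}$, and a direct check yields $T^{-d}\{x : x[0]=s\} = \{x : x[d]=s\}$. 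The final identity $\ind_T^{*(-d)} * \ind_s * \ind_T^{*d}$ is then one more application of the master formula with $p=-d$, $q=d$, the $A$-slot being all of $\mathcal{X}$ and the $B$-slot being $\{x:x[d]=s\}$.

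There is no real obstacle here; the only point that requires care is the bookkeeping of the unique factorization $(d,x)=(d-m,T^m x)\cdot(m,x)$ in $\mathfrak{G}_\mathcal{X}$, since it underlies why the convolution collapses to an indicator rather than a weighted sum. Once that is cleanly laid out, every formula in the fact is a substitution into the master identity.
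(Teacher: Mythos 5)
Your proof is correct and follows essentially the same route as the paper: the unique factorization $(d,x)=(d-m,T^m x)\cdot(m,x)$ you use to collapse the convolution to an indicator is exactly the paper's observation that $(d,x)=(p,a)\cdot(q,b)$ forces $d=p+q$, $a=T^q(b)$, $x=b$, and the subsequent identities are obtained by the same substitutions (your added induction showing $\ind_T^{*d}=\ind_{\{d\}\times\mathcal{X}}$ is a harmless extra detail the paper leaves implicit).
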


\medskip

\noindent Denote
$$ V = \Span_{\mathbbm{k}} \{\ind_T^{\pm 1},\ind_s\ \colon\ s\in \Sigma\}.$$

Every $f\in \mathbbm{k}[\mathfrak{G}_w]$ is continuous and compactly supported into a discrete space, hence assumes a finite number of values and can be written as a (finite) linear combination of characteristic functions of the form $\ind_{\{d\}\times Z}$ where $d\in \mathbb{Z}$ and $Z\subseteq \mathcal{X}$ is a basic compact open set, taking the form $$Z = \bigcap_{i=1}^{n} \mathcal{X}_{m_i,s_i}$$ where $$\mathcal{X}_{m_i,s_i} = \{x\in \mathcal{X}\ \colon\ x[m_i]=s_i\}.$$ Assume that $-N \leq m_1<\cdots<m_n \leq N$ for some $N\geq 1$ (so $n\leq 2N+1$).
Notice that 
\begin{eqnarray*}
    \ind_{\{d\}\times Z} & = & \ind_T^{* d} * \ind_{\{0\}\times \mathcal{X}_{m_1,s_1}} * \cdots * \ind_{\{0\}\times \mathcal{X}_{m_n,s_n}} \\ & = & \ind_T^{* d} * \left(  \ind_T^{* (-m_1)} * \ind_{s_1} * \ind_T^{* m_1} \right) * \left(  \ind_T^{* (-m_2)} * \ind_{s_2} * \ind_T^{* m_2} \right) * \\ & & \cdots * \left(  \ind_T^{* (-m_n)} * \ind_{s_n} * \ind_T^{* m_n} \right) \\ & = & \ind_T^{*(d-m_1)} * \ind_{s_1} * \ind_T^{*(m_1-m_2)} * \ind_{s_2} * \ind_T^{*(m_2-m_3)} * \\ & & \cdots * \ind_T^{*(m_{n-1}-m_n)} * \ind_{s_n} * \ind_T^{* m_n} \\  & \in & V^{|d-m_1|+(m_2-m_1)+\cdots+(m_n-m_{n-1})+|m_n|+n} \\ & = & V^{|d-m_1|+(m_n-m_1)+|m_n|+n} \\ & \subseteq & V^{|d|+|m_1|+2N+|m_n|+n}\subseteq V^{|d|+4N+2N+1}\subseteq V^{|d|+7N}.
\end{eqnarray*}
Let $$W_N = \Span_{\mathbbm{k}} \{ \ind_{\{d\}\times \{x\in \mathcal{X}\ \colon\ x[-N]=s_1,\dots,x[N]=s_{2N+1}\}} \ \colon \ d\in [-N,N],\ s_1,\dots,s_{2N+1}\in \Sigma \}.$$
We thus have $$W_N \subseteq V^{8N}.$$
Conversely, $$V^N \subseteq W_N.$$ Indeed, $V\subseteq W_1$ and $V * W_N + W_N * V \subseteq W_{N+1}$. We also have $W_N * W_M \subseteq W_{N+M}$.

\subsection{Return and recurrence}

Let $w$ be a uniformly recurrent word. For each finite factor $u$ of $w$ let $K_u$ be the minimum integer such that every factor of $w$ of length $K_u$ contains an occurrence of $u$. The \emph{recurrence function} of $w$ is
$$Rec_w(n) = \max\{K_u\ \colon\ u\ \text{a factor of}\ w\ \text{of length}\ n\}$$
or equivalently
$$Rec_w(n) = \min\Big\{K\in \mathbb{N}\ \colon\ \substack{\text{every factor of}\ w\ \text{of length}\ K\ \text{contains} \\ \text{an occurrence of every factor of}\ w\ \text{of length}\ n} \Big\}.$$

\begin{lem} \label{lem:lower bound on Ret}
Let $w\in \Sigma^{\mathbb{Z}}$ be a uniformly recurrent word. Then
$$Rec_w(n) \preceq Ret_{\mathbbm{k}[\mathfrak{G}_w]}(n) + n.$$
\end{lem}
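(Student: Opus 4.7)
To each factor $u$ of $w$ of length $n$, associate the nonzero element $a_u := \ind_{\{0\}\times \mathcal{X}_u} \in \mathbbm{k}[\mathfrak{G}_w]$, where $\mathcal{X}_u := \{x \in \mathcal{X}_w : x[0,n-1] = u\}$, which is nonempty since $u$ is a factor of $w$. Using the identity $\ind_T^{*(-d)} * \ind_s * \ind_T^{*d} = \ind_{\{0\}\times \{x : x[d] = s\}}$ and the commutativity relation $\ind_{\{0\}\times A} * \ind_{\{0\}\times B} = \ind_{\{0\}\times (A \cap B)}$ from the Fact block, I can write
\[
a_u = \prod_{i=0}^{n-1}\bigl( \ind_T^{*(-i)} * \ind_{u_i} * \ind_T^{*i} \bigr).
\]
Telescoping the adjacent exponents $\ind_T^{*i} * \ind_T^{*(-(i+1))} = \ind_T^{*(-1)}$ exhibits $a_u$ as a product of $n + 2(n-1) = 3n-2$ generators from $V$, so $a_u \in V^{3n-2} \subseteq V^{3n}$.

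Set $r := Ret_{\mathbbm{k}[\mathfrak{G}_w]}(3n)$. Since $a_u \neq 0$, the definition of the return function provides a decomposition $1 = \sum_i c_i\, v_i * a_u * v'_i$ with $v_i, v'_i \in V^r$ and scalars $c_i$. Every element of $V^r$ is a linear combination of characteristic functions $\ind_{\{d\}\times E}$ with $|d| \leq r$ (the degree is bounded by the number of $\ind_T^{\pm 1}$ factors in any product), and applying the convolution rule twice yields
\[
\ind_{\{d_1\}\times E_1} * a_u * \ind_{\{d_2\}\times E_2} = \ind_{\{d_1+d_2\}\times \bigl(E_2 \cap T^{-d_2}E_1 \cap \{x : x[-d_2, -d_2+n-1]=u\}\bigr)}.
\]
Since the full linear combination equals $\ind_{\{0\}\times \mathcal{X}_w}$, every $x \in \mathcal{X}_w$ must lie in the support of at least one such summand, forcing the existence of $d_2 \in [-r,r]$ with $x[-d_2, -d_2+n-1] = u$. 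Hence every $x \in \mathcal{X}_w$ contains an occurrence of $u$ within the window of positions $[-r, r+n-1]$, of length $2r + n$.

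By shift-invariance of $\mathcal{X}_w$, the same conclusion applies to every shift of $x$, so every factor of $\mathcal{X}_w$ of length $2r + n$ contains $u$; in particular, every factor of $w$ of length $2r + n$ does. Maximizing over all factors $u$ of length $n$ gives $Rec_w(n) \leq 2\,Ret_{\mathbbm{k}[\mathfrak{G}_w]}(3n) + n$, which at once implies the asymptotic bound $Rec_w(n) \preceq Ret_{\mathbbm{k}[\mathfrak{G}_w]}(n) + n$. The main technical step is the support identification in the middle paragraph; once that is nailed down, the rest is a direct dictionary between the convolution algebra and the combinatorics of $w$.
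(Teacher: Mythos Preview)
Your proof is correct and follows essentially the same approach as the paper's: both pick the idempotent $a_u=\ind_{\{0\}\times\{x:x[0,n-1]=u\}}$, place it in a small power of $V$, expand $1\in V^r a_u V^r$ into characteristic functions, and read off an occurrence of $u$ in a window of length $2r+n$ around any point of $\mathcal{X}_w$. The only differences are cosmetic: you argue directly while the paper argues by contradiction, and you obtain the sharper inclusion $a_u\in V^{3n-2}$ rather than the paper's $a_u\in W_n\subseteq V^{8n}$, which does not affect the $\preceq$ conclusion. One small slip: in your displayed convolution identity, $T^{-d_2}(\mathcal{X}_u)=\{x:x[d_2,d_2+n-1]=u\}$, not $\{x:x[-d_2,-d_2+n-1]=u\}$ (check against the Fact $T^{-d}(\{x:x[0]=s\})=\{x:x[d]=s\}$); since $d_2$ ranges over $[-r,r]$ either way, the window $[-r,r+n-1]$ and hence the final bound are unaffected.
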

\begin{proof}
Let $w$ be a uniformly recurrent word  and let $\mathcal{X}=\mathcal{X}_w$ be the corresponding subshift.
Fix $n$. Denote $s:=Ret_{\mathbbm{k}[\mathfrak{G}_w], V}(8n)$ and assume to the contrary that $2s + n < Rec_w(n)$. By definition of the recurrence function $Rec$, we know that there exist factors $u=w[d,d+n-1]$ and $v=w[e,e+2s+n-1]$ of $w$, such that $u$ does not factor $v$. 
Consider the element
$$ a = \ind_{\{0\}\times \{x\in \mathcal{X} \ \colon \ x[0,n-1]=u \}} \in W_n \subseteq V^{8n}.$$
By definition of the return function $Ret$, we have that $$\ind_{\{0\}\times \mathcal{X}} \in V^s a V^s \subseteq W_s a W_s$$
and therefore $\ind_{\{0\}\times \mathcal{X}}$ is a linear combination of functions of the form $$(*)\ \ \ \ind_{\{d_i\}\times Z_i} * \underbrace{\ind_{\{0\}\times \{x\in \mathcal{X} \ \colon \ x[0,n-1]=u \}}}_{a} * \ind_{\{d'_i\}\times Z'_i}$$
where $d_i,d'_i\in [-s,s]$. Each one of the above form $(*)$ is itself a linear combination of characteristic functions supported on $\mathbb{Z}\times \{x\in \mathcal{X} \ \colon \ x[p,p+n-1]=u \}$ for $p\in [-s,s]$.
Now $w$ admits a factor $v$ as above of length $2s+n$ which does not contain any occurrences of $u$; equivalently, there exists  some $w'\in \mathcal{X}$ such that $w'[-s,s+n-1]$ does not contain any occurrence of $u$. Therefore each one of the functions of the form $(*)$ vanishes on $(0,w')\in \mathbb{Z} \times \mathcal{X}$, and so does the identity $\ind_{\{0\}\times \mathcal{X}}$, a contradiction. 
It follows that $2Ret_{\mathbbm{k}[\mathfrak{G}_w], V}(8n) + n \geq Rec_w(n)$, and the claim follows.
\end{proof}

\section{Uniformly recurrent words with prescribed polynomial recurrence} \label{sec:construction of words with prescribed recurrence}

\subsection{Construction and basic properties}

We now turn to a specific construction of a uniformly recurrent word $w$, depending on a sequence of positive integers $(n_j)_{j=1}^{\infty}$, $n_j\geq 2 \ \forall j$. Fix the alphabet $\Sigma=\{a,b\}$. Let
\begin{eqnarray*}
&& \alpha_0  = a, \ \ \ \beta_0=b \\
&& \alpha_{j+1}  =  (\alpha_j ^2 \beta_j)^{n_{j+1}}, \ \ \ \beta_{j+1} = (\beta_j ^2 \alpha_j)^{n_{j+1}}.
\end{eqnarray*}

Since $\alpha_{j+1} = \star \alpha_j \star$ for some non-empty words $\star$, it follows that $w:=\lim_{j\rightarrow \infty} \alpha_j$ is a well-defined bi-infinite word. The finite factors of $w$ are, by definition, all finite factors of $\{\alpha_0,\alpha_1,\alpha_2,\dots\}$. 
Let $$N_k := \prod_{j=1}^k (3n_j) = |\alpha_k|=|\beta_k|$$ so $N_k=3n_k N_{k-1}$, and let $$\widetilde{N}_k := 3(n_k-1)N_{k-1} = N_k - 3N_{k-1}$$ for all $k\geq 1$.
Observe that $\frac{1}{2} N_k \leq \widetilde{N}_k\leq N_k$.

\begin{lem} \label{lem:factors alpha beta}
Let $u$ be a factor of $w$ of length $|u|\leq \widetilde{N}_k$. Then $u$ factors either $\alpha_k \beta_k$ or $\beta_k \alpha_k$.
\end{lem}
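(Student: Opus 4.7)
The plan is to establish, by induction on $j\geq k$, the auxiliary strengthening that every factor of length at most $\widetilde{N}_k$ of any of the six words $\alpha_j,\beta_j,\alpha_j\alpha_j,\alpha_j\beta_j,\beta_j\alpha_j,\beta_j\beta_j$ is a factor of $\alpha_k\beta_k$ or of $\beta_k\alpha_k$. The lemma then follows immediately, since every finite factor of $w$ sits inside some $\alpha_j$ (and hence inside $\alpha_j\beta_j$) for $j$ sufficiently large.

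For the base case $j=k$ the claim is immediate for $\alpha_k,\beta_k,\alpha_k\beta_k,\beta_k\alpha_k$ (for instance, $\alpha_k$ is a prefix of $\alpha_k\beta_k$). The genuine work is for $\alpha_k\alpha_k$ and, symmetrically, $\beta_k\beta_k$. Here I would exploit the exact periodicity $\alpha_k\alpha_k=(\alpha_{k-1}^2\beta_{k-1})^{2n_k}$, whose period length is $3N_{k-1}$. A factor of length $L\leq\widetilde{N}_k=3(n_k-1)N_{k-1}$ can be translated by a multiple of $3N_{k-1}$ inside $\alpha_k\alpha_k$ so that its starting position lies in $[1,3N_{k-1}]$; its endpoint then lies at position at most $3N_{k-1}+L-1\leq N_k-1$, placing the whole factor inside the prefix $\alpha_k$ of $\alpha_k\beta_k$.

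For the inductive step $j\geq k+1$, each of the six target words admits a natural $N_{j-1}$-block decomposition coming from the recursions $\alpha_j=(\alpha_{j-1}^2\beta_{j-1})^{n_j}$ and $\beta_j=(\beta_{j-1}^2\alpha_{j-1})^{n_j}$. Because $\widetilde{N}_k\leq N_k\leq N_{j-1}$ in this range, a factor of length $\leq\widetilde{N}_k$ spans at most two consecutive $N_{j-1}$-blocks, and in each of the six cases (accounting also for the middle junction) the pair of consecutive blocks is one of $\alpha_{j-1}\alpha_{j-1},\alpha_{j-1}\beta_{j-1},\beta_{j-1}\alpha_{j-1},\beta_{j-1}\beta_{j-1}$. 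The inductive hypothesis then supplies the required containment in $\alpha_k\beta_k$ or $\beta_k\alpha_k$.

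The main obstacle is really the base case $\alpha_k\alpha_k$: the middle junctions of $\alpha_k\alpha_k,\alpha_k\beta_k,\beta_k\alpha_k$ are combinatorially distinct, so no formal manipulation alone replaces one by another, and one must use the fine structure of the construction. The bound $\widetilde{N}_k=N_k-3N_{k-1}$ is precisely tight for the periodic-shift argument to push any qualifying factor entirely inside one copy of $\alpha_k$; any larger factor would genuinely feel the junction.
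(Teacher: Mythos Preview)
Your proof is correct and follows essentially the same approach as the paper. The core step---using the $3N_{k-1}$-periodicity of $\alpha_k\alpha_k=(\alpha_{k-1}^2\beta_{k-1})^{2n_k}$ to slide any factor of length $\leq\widetilde{N}_k$ into a single copy of $\alpha_k$---is identical to the paper's argument. The only difference is in the reduction: the paper observes in one sentence that $w$ can be written (for every $k$) as a bi-infinite concatenation of $\alpha_k$'s and $\beta_k$'s, so any factor of length $\leq N_k$ already lies in one of the four two-block words $\alpha_k\alpha_k,\alpha_k\beta_k,\beta_k\alpha_k,\beta_k\beta_k$; you instead unwind this fact as an explicit induction on $j\geq k$. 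Your inductive step is correct (all cross-block junctions in each of your six words at level $j$ are indeed among the four two-block words at level $j-1$), but it amounts to reproving the block decomposition that the paper takes as evident.
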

\begin{proof}
The word $w$ can be written as a bi-infinite word obtained by concatenating $\alpha_k$ and $\beta_k$ (this is true for every $k$). Every factor $u$ of $w$ of length $\widetilde{N}_k\leq N_k$ factors one of $$\alpha_k\alpha_k,\alpha_k\beta_k,\beta_k\alpha_k,\beta_k\beta_k.$$
If $u$ factors $\alpha_k\beta_k$ or $\beta_k\alpha_k$ we are done; suppose that $u$ factors $$\alpha_k\alpha_k=(\alpha_{k-1}\alpha_{k-1}\beta_{k-1})^{2n_k},$$
whose length is $6n_kN_{k-1}$. Then
$$\alpha_k\alpha_k = (\alpha_{k-1}\alpha_{k-1}\beta_{k-1})^i v u v' (\alpha_{k-1}\alpha_{k-1}\beta_{k-1})^j$$
for some $v,v'$ such that $vuv'=(\alpha_{k-1}\alpha_{k-1}\beta_{k-1})^l$. Taking $v,v'$ shortest possible, we see that $$|v|+|v'|\leq |\alpha_{k-1}\alpha_{k-1}\beta_{k-1}|=3N_{k-1}$$ (in fact, $|v|+|v'|$ is either $0$ or $3N_{k-1}$) since $|u|=\widetilde{N}_k$ is a multiple of $3N_{k-1}$, which is the length of $\alpha_{k-1}\alpha_{k-1}\beta_{k-1}$, so $$|vuv'|\leq |u|+3N_{k-1}=3(n_k-1)N_{k-1}+3N_{k-1}=3n_kN_{k-1},$$ hence $l\leq n_k$ and so $u$ factors $vuv' = (\alpha_{k-1}\alpha_{k-1}\beta_{k-1})^{n_k}=\alpha_k$. Similarly, if $u$ factors $\beta_{k-1}\beta_{k-1}$ then $u$ factors $\beta_k$. The lemma is proved.
\end{proof}

\begin{lem} \label{lem:pre every 7}
Every factor of $w$ of length $7N_k$ contains both $\alpha_k \beta_k$ and $\beta_k \alpha_k$ as factors.
\end{lem}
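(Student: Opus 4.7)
The plan is to analyze $w$ through its natural level-$k$ block decomposition, viewing $w$ as a bi-infinite concatenation of blocks $\alpha_k$ and $\beta_k$ (each of length $N_k$) whose arrangement is prescribed by the level-$(k+1)$ structure: inside an $\alpha_{k+1}$-block the sequence of level-$k$ blocks is $(\alpha_k\alpha_k\beta_k)^{n_{k+1}}$, and inside a $\beta_{k+1}$-block it is $(\beta_k\beta_k\alpha_k)^{n_{k+1}}$. A simple alignment count shows that any factor of $w$ of length $7N_k$ contains at least six consecutive complete level-$k$ blocks as a contiguous substring (starting at any offset $s \in [0, N_k-1]$, the window covers either $6$ or $7$ complete blocks). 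It therefore suffices to prove that in any six consecutive level-$k$ blocks of $w$, both an $\alpha_k\beta_k$ adjacency and a $\beta_k\alpha_k$ adjacency appear.

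To do this, I would first bound by $3$ the maximum run length of a single block-type in the level-$k$ block sequence. Inside a single $\alpha_{k+1}$- or $\beta_{k+1}$-block the runs have length at most $2$, because every triple is $\alpha_k\alpha_k\beta_k$ or $\beta_k\beta_k\alpha_k$. Runs of length exactly $3$ arise only at boundaries between differently-typed level-$(k+1)$ blocks: $\alpha_{k+1}\mid\beta_{k+1}$ contributes three consecutive $\beta_k$'s, and $\beta_{k+1}\mid\alpha_{k+1}$ contributes three consecutive $\alpha_k$'s. The assumption $n_j\geq 2$ for all $j$ ensures that the next triple inside the succeeding level-$(k+1)$ block immediately breaks the run, so runs of length $\geq 4$ cannot occur.

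The main step is to rule out extremal six-block configurations. Suppose six consecutive level-$k$ blocks contain a transition of only one type. Then they form a pattern $\alpha_k^p\beta_k^q$ or $\beta_k^p\alpha_k^q$ with $p+q=6$, and by the run-length bound $p,q\leq 3$, forcing $p=q=3$. But a length-$3$ run of $\alpha_k$'s sits across a $\beta_{k+1}\mid\alpha_{k+1}$ boundary, and the two blocks immediately following it are $\beta_k$ (closing the first triple $\alpha_k\alpha_k\beta_k$ of $\alpha_{k+1}$) and then $\alpha_k$ (opening the next triple, which exists because $n_{k+1}\geq 2$), contradicting the shape $\alpha_k^3\beta_k^3$; the pattern $\beta_k^3\alpha_k^3$ is excluded symmetrically. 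Hence at least two transitions occur, and any binary sequence of length $6$ with $\geq 2$ transitions must contain transitions of both kinds, yielding the desired occurrences of $\alpha_k\beta_k$ and $\beta_k\alpha_k$ as factors of $w$. The main obstacle --- and the reason the threshold $7N_k$ (rather than $6N_k$) and the assumption $n_j\geq 2$ both enter --- is precisely this exclusion of extremal length-$6$ patterns, as five consecutive blocks can legitimately take the one-transition shape $\alpha_k\alpha_k\beta_k\beta_k\beta_k$ across an $\alpha_{k+1}\mid\beta_{k+1}$ boundary.
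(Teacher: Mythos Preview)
Your proof is correct and rests on the same structural observation as the paper's --- viewing $w$ through its level-$(k+1)$ decomposition and reading off the induced level-$k$ block sequence --- but the execution differs. The paper writes $u=u'u''$ with $u'$ a suffix of $X\in\{\alpha_{k+1},\beta_{k+1}\}$ and $u''$ a prefix of $Y\in\{\alpha_{k+1},\beta_{k+1}\}$, notes that $u$ contains five consecutive full level-$k$ blocks, and then runs through the four boundary types $XY$ with explicit length thresholds on $|u'|,|u''|$ in each case. Your argument instead extracts six full blocks, establishes a uniform run-length bound of $3$ on the level-$k$ block sequence, and reduces the problem to excluding the two extremal patterns $\alpha_k^3\beta_k^3$ and $\beta_k^3\alpha_k^3$ via the observation that a run of three identical blocks pins down a specific level-$(k+1)$ boundary and hence the next two blocks. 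This buys a shorter, more conceptual proof that avoids the four-way case split; it also makes transparent exactly where the hypothesis $n_{k+1}\geq 2$ is used (to supply the block after $\beta_k$ in the first triple of $\alpha_{k+1}$, breaking the would-be $\beta_k^3$ continuation), whereas in the paper's proof this hypothesis is implicit in statements like ``$u''$ starts with $\alpha_k\alpha_k\beta_k\alpha_k\alpha_k$''. Both arguments are equally local and neither generalizes more readily than the other.
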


\begin{proof}
First notice that we can decompose $w=\prod_{i\in \mathbb{Z}} w_i$ where $w_i\in \{\alpha_{k+1},\beta_{k+1}\}$. 
Let $u$ be a factor of $w$ of length $7N_k$. Then either $u$ contains $\alpha_{k+1}$ or $\beta_{k+1}$ as factors, in which case we are done as both of them contain $\alpha_k\beta_k$ and $\beta_k\alpha_k$ as factors. Assume the contrary. Then $u$ factors one of $$\alpha_{k+1}\alpha_{k+1},\ \beta_{k+1}\beta_{k+1},\ \alpha_{k+1}\beta_{k+1},\  \beta_{k+1}\alpha_{k+1}.$$

Suppose that $u$ factors $XY$ for some $X,Y\in \{\alpha_{k+1},\beta_{k+1}\}$. Recall that each one of these is a concatenation of $\alpha_k,\beta_k$'s (whose lengths are $N_k$), and since $|u|=7N_k$, it follows that $u$ contains $$u_1u_2u_3u_4u_5 \ \ \text{where}\ \ u_i\in \{\alpha_k,\beta_k\}.$$ 
If $u$ factors $\alpha_{k+1}$ or $\beta_{k+1}$ then $u_1u_2u_3u_4u_5$ contains $\alpha_k\beta_k\alpha_k$ or $\beta_k\alpha_k\beta_k$, respectively, and we are done.

Otherwise, $u=u'u''$ where $u'$ is a suffix of $X$ and $u''$ is a prefix of $Y$. Recall that $|u'|+|u''|=|u|=7N_k$. We deal with the following cases.

\medskip

\emph{Suppose $X=\alpha_{k+1},Y=\alpha_{k+1}$.} If $2N_k\leq |u'|\leq 6N_k$ then $u'$ ends with $\alpha_k\beta_k$ and $|u''|\geq N_k$, so $u''$ starts with $\alpha_k$, and hence $u$ contains $\alpha_k\beta_k\alpha_k$ and we are done. If $|u'|\leq 2N_k$ then $|u''|\geq 5N_k$ so $u''$ starts with $\alpha_k\alpha_k\beta_k\alpha_k\alpha_k$ and we are done again. Finally, if $|u'|\geq 6N_k$ then $u'$ ends with $\alpha_k\alpha_k\beta_k\alpha_k\alpha_k\beta_k$ and we are done again.

\medskip

\emph{Suppose $X=\beta_{k+1},Y=\beta_{k+1}$.} If $2N_k\leq |u'|\leq 6N_k$ then $u'$ ends with $\beta_k\alpha_k$ and $|u''|\geq N_k$, so $u''$ starts with $\beta_k$, and hence $u$ contains $\beta_k\alpha_k\beta_k$ and we are done. If $|u'|\leq 2N_k$ then $|u''|\geq 5N_k$ so $u''$ starts with $\beta_k\beta_k\alpha_k\beta_k\beta_k$ and we are done again. Finally, if $|u'|\geq 6N_k$ then $u'$ ends with $\beta_k\beta_k\alpha_k\beta_k\beta_k\alpha_k$ and we are done again.

\medskip

\emph{Suppose $X=\alpha_{k+1},Y=\beta_{k+1}$.} Either $|u'|\geq 2N_k$, in which case $u'$ ends with $\alpha_k\beta_k$, or $|u''|\geq 5N_k$, in which case $u''$ starts with $\beta_k\beta_k\alpha_k\beta_k\beta_k$, so again $u$ contains $\alpha_k\beta_k$. Similarly, either $|u'|\geq 4N_k$, in which case $u'$ ends with $\beta_k\alpha_k\alpha_k\beta_k$, or $|u''|\geq 3N_k$, in which case $u''$ starts with $\beta_k\beta_k\alpha_k$, and in either case $u$ contains $\beta_k\alpha_k$

\medskip

\emph{Suppose $X=\beta_{k+1},Y=\alpha_{k+1}$.} Either $|u'|\geq 2N_k$, in which case $u'$ ends with $\beta_k\alpha_k$, or $|u''|\geq 5N_k$, in which case $u''$ starts with $\alpha_k\alpha_k\beta_k\alpha_k\alpha_k$, so again $u$ contains $\beta_k\alpha_k$. Similarly, either $|u'|\geq 4N_k$, in which case $u'$ ends with $\alpha_k\beta_k\beta_k\alpha_k$, or $|u''|\geq 3N_k$, in which case $u''$ starts with $\alpha_k\alpha_k\beta_k$, and in either case $u$ contains $\alpha_k\beta_k$.
\end{proof}

\subsection{Complexity}

\begin{prop} \label{prop:linear complexity}
Let $w$ be as above. Then $p_w(n)=O(n)$.
\end{prop}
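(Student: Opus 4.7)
Given $n\geq 1$, my plan is to choose $k\geq 1$ to be the smallest index with $\widetilde{N}_k\geq n$ (well-defined since $\widetilde{N}_k\to\infty$), and then invoke Lemma \ref{lem:factors alpha beta}: every length-$n$ factor of $w$ is a factor of either $\alpha_k\beta_k$ or $\beta_k\alpha_k$. It thus suffices to bound, by an absolute multiple of $n$, the number of length-$n$ factors of each of these two length-$2N_k$ words.

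The main input is the periodic decomposition $\alpha_k=P^{n_k}$, where $P=\alpha_{k-1}^2\beta_{k-1}$ has length $|P|=3N_{k-1}$; analogously $\beta_k=Q^{n_k}$ with $|Q|=3N_{k-1}$. A length-$n$ factor of $\alpha_k$ starting at position $j\in[0,N_k-n]$ depends only on the residue $j\bmod|P|$, and since $n\leq\widetilde{N}_k=N_k-3N_{k-1}$ the range of admissible starting positions already exceeds $|P|$, so there are at most $|P|=3N_{k-1}$ distinct length-$n$ factors inside $\alpha_k$, and likewise inside $\beta_k$. Length-$n$ factors of $\alpha_k\beta_k$ straddling the central boundary correspond to at most $n-1$ starting positions. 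Summing these three contributions and doubling to account for $\beta_k\alpha_k$ yields
\[
p_w(n)\leq 2\bigl(6N_{k-1}+n-1\bigr)=12N_{k-1}+2n-2.
\]

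It remains to bound $N_{k-1}$ linearly in $n$. For $k=1$ this is trivial since $N_0=1$. For $k\geq 2$, minimality of $k$ gives $\widetilde{N}_{k-1}<n$, and the already-noted inequality $\widetilde{N}_{k-1}\geq\tfrac12 N_{k-1}$ yields $N_{k-1}\leq 2\widetilde{N}_{k-1}<2n$. Substituting gives $p_w(n)\leq 26n-2=O(n)$. The most delicate point of the argument is the choice of $k$: taking minimality with respect to $\widetilde{N}_k$ rather than $N_k$ is what simultaneously ensures $n\leq\widetilde{N}_k$ (so that Lemma \ref{lem:factors alpha beta} applies and the periodicity count gives $3N_{k-1}$ rather than the weaker $N_k-n+1$) and, through the relation $\widetilde{N}_k\geq\tfrac12 N_k$, forces $N_{k-1}$ to stay $O(n)$ regardless of how large any individual $n_j$ may be.
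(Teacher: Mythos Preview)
Your proof is correct and follows essentially the same approach as the paper's: invoke Lemma~\ref{lem:factors alpha beta}, exploit the periodicity of $\alpha_k$ and $\beta_k$ to bound the non-straddling factors by the period length $3N_{k-1}$, and count the straddling factors trivially. The only difference is organizational: the paper first treats $N_k\leq n\leq\widetilde N_{k+1}$ (using $N_k\leq n$ to bound the period) and then handles the gap $\widetilde N_k\leq n<N_k$ via monotonicity of $p_w$, whereas your choice of the minimal $k$ with $\widetilde N_k\geq n$ lets you use $\widetilde N_{k-1}<n$ to bound $N_{k-1}$ uniformly, avoiding the case split at the cost of a slightly larger constant.
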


\begin{proof}
Let $N_k \leq n\leq \widetilde{N}_{k+1}$. Let $u$ be a length-$n$ factor of $w$. By Lemma \ref{lem:factors alpha beta}, $u$ factors $\alpha_{k+1}\beta_{k+1}$ or $\beta_{k+1}\alpha_{k+1}$. Let us count the length-$n$ factors of these words.

Let $$V:=\alpha_{k+1}\beta_{k+1} = \underbrace{\alpha_k\alpha_k\beta_k\cdots \alpha_k\alpha_k\beta_k}_{n_k\ \text{times}}\underbrace{\beta_k\beta_k\alpha_k\cdots \beta_k\beta_k\alpha_k}_{n_k\ \text{times}},$$
written as $V=V[1]\cdots V[2N_{k+1}]$. Thus $u=V[i,i+n-1]$ for some $1\leq i\leq 2N_{k+1}-n+1$.
If $1\leq i,i'\leq N_{k+1} - n + 1 $ satisfy $i\equiv i' \mod 3N_k$ then $V[i,i+n-1]=V[i',i'+n-1]$. Likewise, if $N_{k+1}+1 \leq i,i'\leq 2N_{k+1}-n+1$ satisfy $i\equiv i' \mod 3N_k$ then $V[i,i+n-1]=V[i',i'+n-1]$. It follows that $V$ has at most $3N_k$ factors of length $n$ ending before $V[N_{k+1}]$, and at most $3N_k$ factors of length $n$ starting at $V[n_{k+1}+1]$ or afterwards. Any other factor must be of the form $V[i,i+n-1]$ for $N_{k+1}-n+1<i\leq N_{k+1}$, and there are at most $n$ distinct factors of this form. It follows that the number of length-$n$ factors of $\alpha_{k+1}\beta_{k+1}$ is at most $6N_k+n\leq 7n$, and a similar analysis shows that the number of length-$n$ factors of $\beta_{k+1}\alpha_{k+1}$ is at most $7n$ as well. Hence $p_w(n)\leq 14n$.

Now for an arbitrary $n\geq \widetilde{N}_1$, let $k$ be such that $\widetilde{N}_k\leq n < \widetilde{N}_{k+1}$. If $n\geq N_k$ then the above computation shows that $p_w(n)\leq 14n$. If $\widetilde{N}_k \leq n < N_k$ then $$p_w(n)\leq p_w(N_k)\leq 14 N_k \leq 28 \widetilde{N}_k \leq 28 n.$$ It follows that $p_w(n)=O(n)$.
\end{proof}

\subsection{Density of letters within factors}

Given a word $u\in \Sigma^m$ and a letter $x\in \Sigma$ we let $\Phi_x(u)$ count the occurrences of $x$ as a subword of $u$ and $\varphi_x(u)=\frac{\Phi_x(u)}{m}$. Thus $\varphi_x(u)\in [0,1]$. Notice that $\wt_x(u^i)=\wt_x(u)$ for every $i\geq 1$.

\begin{lem} \label{lem:density}
We have $$\wt_a(\alpha_k)=\frac{1}{2}(1+3^{-k}),\ \ \wt_b(\alpha_k)=\frac{1}{2}(1-3^{-k})$$ and $$\wt_a(\beta_k)=\frac{1}{2}(1-3^{-k}),\ \ \wt_b(\beta_k)=\frac{1}{2}(1+3^{-k}).$$
\end{lem}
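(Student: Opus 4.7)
The plan is a straightforward induction on $k$, once one extracts the right recurrence for the frequencies. The key invariance observations are (i) $\wt_x(u^i) = \wt_x(u)$ for every positive integer $i$, which the paper already notes, and (ii) if $u_1,\dots,u_m$ all have the same length then $\wt_x(u_1\cdots u_m) = \frac{1}{m}\sum_{i=1}^{m}\wt_x(u_i)$, since the total count of occurrences of the single letter $x$ splits as a sum and all denominators agree.

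Since $|\alpha_j| = |\beta_j| = N_j$, applying (i) and (ii) to the defining recursions yields
\begin{align*}
\wt_a(\alpha_{j+1}) &= \wt_a\bigl((\alpha_j^2\beta_j)^{n_{j+1}}\bigr) = \wt_a(\alpha_j^2\beta_j) = \tfrac{2\wt_a(\alpha_j)+\wt_a(\beta_j)}{3},\\
\wt_a(\beta_{j+1}) &= \wt_a\bigl((\beta_j^2\alpha_j)^{n_{j+1}}\bigr) = \wt_a(\beta_j^2\alpha_j) = \tfrac{2\wt_a(\beta_j)+\wt_a(\alpha_j)}{3},
\end{align*}
and symmetrically for the letter $b$. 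In particular these recurrences do \emph{not} depend on the chosen sequence $(n_j)$, which is what makes the clean closed form possible.

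Next I would verify the base case: $\wt_a(\alpha_0)=1=\tfrac12(1+3^0)$ and $\wt_a(\beta_0)=0=\tfrac12(1-3^0)$, and similarly for $b$. For the inductive step, assuming $\wt_a(\alpha_k)=\tfrac12(1+3^{-k})$ and $\wt_a(\beta_k)=\tfrac12(1-3^{-k})$, a direct substitution into the recurrence gives
\[
\wt_a(\alpha_{k+1}) = \frac{2\cdot\tfrac12(1+3^{-k}) + \tfrac12(1-3^{-k})}{3} = \frac{\tfrac32 + \tfrac12\cdot 3^{-k}}{3} = \tfrac12\bigl(1 + 3^{-(k+1)}\bigr),
\]
and an identical computation produces $\wt_a(\beta_{k+1}) = \tfrac12(1-3^{-(k+1)})$. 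The statements for $\wt_b$ follow by the $a\leftrightarrow b$ symmetry of the construction (which swaps $\alpha_j$ and $\beta_j$), or equivalently from $\wt_a(u)+\wt_b(u)=1$ since $\Sigma=\{a,b\}$.

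There is no real obstacle here; the only thing to be careful about is making sure one exploits $|\alpha_j|=|\beta_j|$ before taking the average, so that the concatenation $\alpha_j^2\beta_j$ contributes equal weight to each factor, and remembering that raising to the power $n_{j+1}$ leaves the frequency unchanged, which is precisely why the closed form is independent of the sequence $(n_j)$.
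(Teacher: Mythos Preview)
Your argument is correct and follows essentially the same approach as the paper: both derive the linear recurrence $\wt_a(\alpha_{k+1})=\tfrac{2}{3}\wt_a(\alpha_k)+\tfrac{1}{3}\wt_a(\beta_k)$ (and its companion) from the definitions, then solve it. The only cosmetic difference is that the paper packages the recurrence as a $2\times 2$ matrix and diagonalizes to obtain the closed form, whereas you verify the closed form directly by induction; these are interchangeable ways of solving the same linear recurrence.
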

\begin{proof}
We have $$\Phi_a(\alpha_{k+1})=\Phi_a((\alpha_k^2\beta_k)^{n_{k+1}})=2n_{k+1}\Phi_a(\alpha_k)+n_{k+1}\Phi_a(\beta_k)$$ so \begin{eqnarray*}
\wt_a(\alpha_{k+1}) & = & \frac{\Phi_a(\alpha_{k+1})}{N_{k+1}} \\ & = & \frac{2n_{k+1}\Phi_a(\alpha_k)+n_{k+1}\Phi_a(\beta_k)}{3n_{k+1}N_k} \\ & = & \frac{2}{3}\wt_a(\alpha_k)+\frac{1}{3}\wt_a(\beta_k)
\end{eqnarray*} and likewise $$\wt_a(\beta_{k+1})=\frac{1}{3}\wt_a(\alpha_k)+\frac{2}{3}\wt_a(\beta_k).$$ Hence
$$
\left(\begin{matrix} \wt_a(\alpha_{k+1}) \\ \wt_a(\beta_{k+1})  \end{matrix}\right) = \left(\begin{matrix} \frac{2}{3} & \frac{1}{3} \\ \frac{1}{3} & \frac{2}{3} \end{matrix}\right) \left(\begin{matrix} \wt_a(\alpha_k) \\ \wt_a(\beta_k)  \end{matrix}\right)
$$
and since $\wt_a(\alpha_0)=1,\wt_a(\beta_0)=0$ we have that
\begin{eqnarray*}
\left(\begin{matrix} \wt_a(\alpha_k) \\ \wt_a(\beta_k)  \end{matrix}\right) & = & \left(\begin{matrix} \frac{2}{3} & \frac{1}{3} \\ \frac{1}{3} & \frac{2}{3} \end{matrix}\right)^k \left(\begin{matrix} 1 \\ 0  \end{matrix}\right) \\ & = & \left(\begin{matrix} 1 & 1 \\ 1 & -1 \end{matrix}\right)\left(\begin{matrix} 1 & 0 \\ 0 & \frac{1}{3} \end{matrix}\right)^k\left(\begin{matrix} \frac{1}{2} & \frac{1}{2} \\ \frac{1}{2} & -\frac{1}{2} \end{matrix}\right) \left(\begin{matrix} 1 \\ 0  \end{matrix}\right) \\ & = & \left(\begin{matrix} 1 & 1 \\ 1 & -1 \end{matrix}\right)\left(\begin{matrix} 1 & 0 \\ 0 & 3^{-k} \end{matrix}\right) \left(\begin{matrix} \frac{1}{2} \\ \frac{1}{2}  \end{matrix}\right) \\ & = & \left(\begin{matrix} 1 & 1 \\ 1 & -1 \end{matrix}\right)  \left(\begin{matrix} \frac{1}{2} \\ \frac{3^{-k}}{2}  \end{matrix}\right) =  \left(\begin{matrix} \frac{1}{2}(1+3^{-k}) \\ \frac{1}{2}(1-3^{-k})  \end{matrix}\right).
\end{eqnarray*}
The result for $\wt_b(\alpha_k),\wt_b(\beta_k)$ is analogous.
\end{proof}

\subsection{Recurrence and aperiodicity of factors}

\begin{lem} \label{lem:occurrence beta^3}
\begin{enumerate}
\item Let $V:=\alpha_{k+1}\beta_{k+1}$, written as $V=V[1]\cdots V[2N_{k+1}]$. Suppose that $i$ is such that $V[i,i+3N_k-1]=\beta_k^3$. Then $N_{k+1} - 3N_k + 2 \leq i \leq N_{k+1}$. 

\item Let $V:=\beta_{k+1}\alpha_{k+1}$, written as $V=V[1]\cdots V[2N_{k+1}]$. Suppose that $i$ is such that $V[i,i+3N_k-1]=\alpha_k^3$. Then $N_{k+1} - 3N_k + 2 \leq i \leq N_{k+1}$.
\end{enumerate}
\end{lem}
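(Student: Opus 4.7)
The plan is to establish part~(1); part~(2) follows by the same argument with the roles of $\alpha$ and $\beta$ interchanged. An occurrence of $\beta_k^3$ (which has length $3N_k$) at position $i$ in $V=\alpha_{k+1}\beta_{k+1}$ spans $[i,i+3N_k-1]$; it lies entirely inside $\alpha_{k+1}$ when $i\leq N_{k+1}-3N_k+1$, entirely inside $\beta_{k+1}$ when $i\geq N_{k+1}+1$, and straddles the junction precisely when $i\in[N_{k+1}-3N_k+2,N_{k+1}]$. Hence the desired conclusion is equivalent to the claim that $\beta_k^3$ is a factor of neither $\alpha_{k+1}$ nor $\beta_{k+1}$.

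To prove this, I would first establish a synchronization statement by induction on $k$: every occurrence of $\alpha_k$ (or $\beta_k$) as a length-$N_k$ factor of $\alpha_{k+1}$ or $\beta_{k+1}$ starts at a block-aligned position, i.e., at a position of the form $jN_k+1$. The base $k=0$ is vacuous since $N_0=1$. For the inductive step, a hypothetical occurrence at a shift $r\in(0,N_k)$ splits across two or three consecutive level-$k$ blocks (each of which is $\alpha_k$ or $\beta_k$), forcing a nontrivial matching between a proper suffix of one of $\alpha_k,\beta_k$ and a proper prefix of one of them. Expanding each at level $k-1$ via the recursion and applying the inductive synchronization at level $k-1$ forces, in each of the finitely many boundary configurations, either an impossible short period of $\alpha_{k-1}$ or $\beta_{k-1}$, or an impossible long agreement between $\alpha_{k-1}$ and $\beta_{k-1}$ on a prefix or suffix; both are ruled out by the fact that $\alpha_{k-1}$ and $\beta_{k-1}$ begin with different letters ($a$ vs.\ $b$), which follows by a trivial parallel induction on~$k$.

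Once the synchronization lemma is in place, the main claim is immediate. A block-aligned occurrence of $\beta_k^3$ inside $\alpha_{k+1}=(\alpha_k^2\beta_k)^{n_{k+1}}$ corresponds, at the block level (writing $A=\alpha_k$, $B=\beta_k$), to three consecutive $B$'s in $(AAB)^{n_{k+1}}$, which is impossible. Likewise, a block-aligned occurrence inside $\beta_{k+1}=(\beta_k^2\alpha_k)^{n_{k+1}}$ would correspond to $BBB$ in $(BBA)^{n_{k+1}}$, and every $BB$ in this pattern is immediately followed by an $A$, so this is also impossible. Thus both ``entirely inside'' cases are ruled out, and every occurrence of $\beta_k^3$ must straddle the junction, giving exactly the desired range of $i$.

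The main obstacle is the synchronization lemma itself; the inductive step requires simultaneously ruling out several suffix/prefix matching configurations between $\alpha_k$ and $\beta_k$, and it is here that the specific shape of the recursion $\alpha_{j+1}=(\alpha_j^2\beta_j)^{n_{j+1}}$, $\beta_{j+1}=(\beta_j^2\alpha_j)^{n_{j+1}}$---together with the distinctness of the initial letters of $\alpha_j$ and $\beta_j$---is essential to preclude any spurious non-aligned matchings.
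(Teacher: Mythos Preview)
Your synchronization lemma, as stated, is false. Take $k=1$ with $n_1=2$, so $\alpha_1=aabaab$ and $N_1=6$. Inside $\alpha_2=(\alpha_1^2\beta_1)^{n_2}$ the prefix $\alpha_1\alpha_1=aabaab\,aabaab$ has period $3$, and the length-$6$ factor starting at position $4$ is again $aabaab=\alpha_1$. This is a non-aligned occurrence (position $4\not\equiv 1\pmod{6}$), so the claim ``every occurrence of $\alpha_k$ in $\alpha_{k+1}$ is block-aligned'' fails at the first nontrivial level. The underlying reason is exactly the phenomenon your inductive sketch tries to exclude: $\alpha_k=(\alpha_{k-1}^2\beta_{k-1})^{n_k}$ has the short period $3N_{k-1}$, so the suffix/prefix overlap you set up to derive a contradiction is in fact consistent when both sides come from the same block type. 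Your argument only rules out overlaps that mix $\alpha$- and $\beta$-blocks via the ``different first letters'' observation; it does not rule out $\alpha_k$ overlapping itself.

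The paper sidesteps this entirely with a one-line density argument. Since $\alpha_{k+1}=(\alpha_k^2\beta_k)^{n_{k+1}}$ has period $3N_k$, any length-$3N_k$ factor of $\alpha_{k+1}$ is a cyclic rotation of $\alpha_k^2\beta_k$ and hence has $b$-density exactly $\varphi_b(\alpha_{k+1})=\tfrac{1}{2}(1-3^{-(k+1)})$; similarly any length-$3N_k$ factor of $\beta_{k+1}$ has $b$-density $\tfrac{1}{2}(1+3^{-(k+1)})$. But $\varphi_b(\beta_k^3)=\varphi_b(\beta_k)=\tfrac{1}{2}(1+3^{-k})$, which is strictly larger than both, so $\beta_k^3$ cannot occur entirely inside either half. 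This uses only the letter-frequency computation (Lemma~\ref{lem:density}) and the periodicity of $\alpha_{k+1},\beta_{k+1}$; no synchronization is needed.
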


\begin{proof}
It suffices to prove (1) as (2) is perfectly analogous.
Recall
$$V = \alpha_{k+1}\beta_{k+1}=\underbrace{\alpha_k\alpha_k\beta_k\cdots\alpha_k\alpha_k\beta_k}_{V[1,N_{k+1}]} \underbrace{ \beta_k\beta_k\alpha_k\cdots \beta_k\beta_k\alpha_k}_{V[N_{k+1}+1,2N_{k+1}]}.$$
Suppose that $i\leq N_{k+1}-3N_k+1$. Then $V[i,i+3N_k-1]$ is a factor of length $3N_k$ of $$\alpha_{k+1} = \alpha_k\alpha_k\beta_k\cdots\alpha_k\alpha_k\beta_k$$
and therefore equals, up to a cyclic permutation of its letters, to $\alpha_k\alpha_k\beta_k$, and hence
$$\wt_b(V[i,i+3N_k-1])=\wt_b(\alpha_k\alpha_k\beta_k)=\wt_b(\alpha_{k+1})=\frac{1}{2}(1-3^{-(k+1)})$$
by Lemma \ref{lem:density}. 

Now suppose that $i\geq N_{k+1}+1$; then $V[i,i+3N_k-1]$ is a factor of length $3N_k$ of $$\beta_{k+1} = \beta_k\beta_k\alpha_k\cdots\beta_k\beta_k\alpha_k$$
and therefore equals, up to a cyclic permutation of its letters, to $\beta_k\beta_k\alpha_k$, and hence
$$\wt_b(V[i,i+3N_k-1])=\wt_b(\beta_k\beta_k\alpha_k)=\wt_b(\beta_{k+1})=\frac{1}{2}(1+3^{-(k+1)})$$
again by Lemma \ref{lem:density}.

In either case, 
\begin{eqnarray*} 
\wt_b(V[i,i+3N_k-1]) & \leq & \frac{1}{2}(1+3^{-(k+1)}) \\ & < & \frac{1}{2}(1+3^{-k}) =  \wt_b(\beta_k)=\wt_b(\beta_k^3),\end{eqnarray*}
so $V[i,i+3N_k-1]\neq \beta_k^3$.
\end{proof}

\begin{lem} \label{lem:aperiodic}
Let $k\geq 1$. Let $0<d \leq \widetilde{N}_k = N_k-3N_{k-1}$. Then it is impossible that $$\alpha_k\beta_k[i]=\alpha_k \beta_k[i+d]\ \ \text{for all}\ 1\leq i\leq 2N_k-d,$$ and it is impossible that $$\beta_k\alpha_k[i]=\beta_k\alpha_k[i+d]\ \ \text{for all}\ 1\leq i\leq 2N_k-d.$$
\end{lem}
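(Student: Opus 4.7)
The plan is to leverage Lemma \ref{lem:occurrence beta^3} at one level below, which constrains where $\beta_{k-1}^3$ (resp.\ $\alpha_{k-1}^3$) can appear in $\alpha_k\beta_k$ (resp.\ $\beta_k\alpha_k$), and then derive a contradiction by shifting a known occurrence by a multiple of the hypothetical period.

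First I would identify the ``seed'' occurrence. Since $\alpha_k=(\alpha_{k-1}^2\beta_{k-1})^{n_k}$ ends with $\beta_{k-1}$ and $\beta_k=(\beta_{k-1}^2\alpha_{k-1})^{n_k}$ begins with $\beta_{k-1}\beta_{k-1}$, the word $V:=\alpha_k\beta_k$ has an occurrence of $\beta_{k-1}^3$ starting at position $i_0:=N_k-N_{k-1}+1$, straddling the boundary between $\alpha_k$ and $\beta_k$. Next, applying Lemma \ref{lem:occurrence beta^3} with $k$ replaced by $k-1$ to the word $V$, every starting position $i$ of an occurrence of $\beta_{k-1}^3$ in $V$ must satisfy $i\in[N_k-3N_{k-1}+2,\,N_k]$.

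Now suppose toward contradiction that $V$ has period $d$ with $0<d\leq\widetilde{N}_k=N_k-3N_{k-1}$. Iterating the periodicity relation, $\beta_{k-1}^3$ also occurs at every position of the form $i_0+jd$ as long as $i_0+jd+3N_{k-1}-1\leq 2N_k$. Choose $j^{*}:=\lceil N_{k-1}/d\rceil$. By definition $j^{*}d\geq N_{k-1}$, so $i_0+j^{*}d\geq N_k+1>N_k$; on the other hand $j^{*}d\leq N_{k-1}+d-1\leq N_{k-1}+\widetilde{N}_k-1=N_k-2N_{k-1}-1$, using $d\leq \widetilde{N}_k$, hence $i_0+j^{*}d\leq 2N_k-3N_{k-1}$, which stays within the admissible range for an occurrence of $\beta_{k-1}^3$ to fit in $V$. (The edge case when $d\mid N_{k-1}$ gives $j^{*}d=N_{k-1}$, which is still $\leq N_k-2N_{k-1}-1$ because $n_k\geq 2$ forces $N_k\geq 6N_{k-1}$.) Thus $\beta_{k-1}^3$ occurs at the valid position $i_0+j^{*}d$, which lies strictly above $N_k$, contradicting the localization given by Lemma \ref{lem:occurrence beta^3} at level $k-1$.

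The argument for $\beta_k\alpha_k$ is entirely symmetric: since $\beta_k$ ends with $\alpha_{k-1}$ and $\alpha_k$ starts with $\alpha_{k-1}\alpha_{k-1}$, the factor $\alpha_{k-1}^3$ occurs at position $N_k-N_{k-1}+1$, and part (2) of Lemma \ref{lem:occurrence beta^3} at level $k-1$ confines such occurrences to $[N_k-3N_{k-1}+2,N_k]$; the same shift-by-$j^{*}d$ argument yields a contradiction. The main obstacle is the simultaneous balancing of inequalities in step 4---we need $j^{*}d$ large enough to push $i_0+j^{*}d$ past $N_k$ yet small enough to keep the shifted window inside $V$; the hypothesis $d\leq \widetilde{N}_k=N_k-3N_{k-1}$ combined with $n_k\geq 2$ is exactly what makes this balancing possible.
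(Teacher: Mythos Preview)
Your proposal is correct and follows essentially the same approach as the paper: both proofs locate the seed occurrence of $\beta_{k-1}^3$ at position $i_0=N_k-N_{k-1}+1$ in $V=\alpha_k\beta_k$, shift it forward by $m d$ with $m=\lceil N_{k-1}/d\rceil$, verify the shifted copy stays inside $V$ while landing at a starting position $>N_k$, and then invoke Lemma~\ref{lem:occurrence beta^3} (at level $k-1$) for the contradiction. Your separate edge-case check when $d\mid N_{k-1}$ is harmless but unnecessary, since the general bound $j^{*}d\leq N_{k-1}+d-1$ already covers it.
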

\begin{proof}
Let $V=\alpha_k\beta_k$. Suppose that $0<d\leq N_k - 3N_{k-1}$ is such that $V[i]=V[i+d]$ for all $1\leq i\leq 2N_k-d$. Recall that $$V = \alpha_k\beta_k=\underbrace{\alpha_{k-1}\alpha_{k-1}\beta_{k-1}\cdots\alpha_{k-1}\alpha_{k-1}\beta_{k-1}}_{V[1,N_k]} \underbrace{ \beta_{k-1}\beta_{k-1}\alpha_{k-1}\cdots \beta_{k-1}\beta_{k-1}\alpha_{k-1}}_{V[N_k+1,2N_k]}.$$ Notice that $V[N_k-N_{k-1}+1,N_k+2N_{k-1}]=\beta_{k-1}^3$. Let $m:=\lceil \frac{N_{k-1}}{d} \rceil$ and observe that
\begin{eqnarray*}
N_k-N_{k-1}+1+md & \geq & N_k-N_{k-1}+1+\frac{N_{k-1}}{d} d \\ & = & N_k+1
\end{eqnarray*}
and
\begin{eqnarray*}
N_k+2N_{k-1}+md & \leq & N_k+2N_{k-1}+\left(\frac{N_{k-1}}{d} + 1\right)d \\ & \leq & N_k+3N_{k-1}+d \\ & \leq & 2N_k.
\end{eqnarray*}
Hence
\begin{eqnarray*}
\beta_{k-1}^3 & = & V[N_k-N_{k-1}+1,N_k+2N_{k-1}] \\ & = & V[N_k-N_{k-1}+1+md,N_k+2N_{k-1}+md]
\end{eqnarray*}
is a factor of $V[N_k+1,2N_k]$, contradicting Lemma \ref{lem:occurrence beta^3}. The proof for $V=\beta_k\alpha_k$ is completely analogous.
\end{proof}

We now further specify the sequence $(n_j)_{j=1}^{\infty}$. Given a real number $\gamma > 1$, we take $n_1=2$ and $$n_{j+1}=\max\left\{2,\lceil \frac{N_j^{\gamma-1}}{3}\rceil\right\}.$$ Thus there exists some $j_0$ such that $n_j=\lceil\frac{N_{j-1}^{\gamma-1}}{3}\rceil$ for all $j\geq j_0$. It follows that

\begin{align} \label{ineq:N_{j+1}}
    N_j^\gamma = N_j^{\gamma-1} \cdot N_j \leq \underbrace{3n_{j+1}N_j}_{=N_{j+1}} \leq 3\left(\frac{N_j^{\gamma-1}}{3} + 1 \right) \cdot N_j \leq 2N_j^{\gamma}
\end{align}
for all $j\geq j_1$ for some constant $j_1$.

\begin{cor} \label{cor:lower bound on Rec}
We have that $Rec_w(n) \geq 3^{-\gamma}n^\gamma$ infinitely often.
\end{cor}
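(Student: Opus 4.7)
The strategy is to pin $Rec_w(n)$ from below by exhibiting, for each sufficiently large $k$, a short factor $u$ of $w$ of length $3N_k$ together with a long factor $v$ of $w$ (of length $\geq N_{k+1}$) that does not contain $u$. I would take $u = \beta_k^3$ and $v = \alpha_{k+1}$; this would immediately give $Rec_w(3N_k) > N_{k+1}$, and combining with inequality~(\ref{ineq:N_{j+1}}) yields $Rec_w(3N_k) \geq N_k^\gamma = 3^{-\gamma}(3N_k)^\gamma$ for $k$ large enough. Since $3N_k \to \infty$, this establishes the claim along the subsequence $n = 3N_k$.

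Two verifications are needed. The first is easy: $\beta_k^3$ genuinely occurs in $w$. Indeed, $\alpha_{k+1}$ ends with $\beta_k$ and $\beta_{k+1}$ begins with $\beta_k^2$, so the factor $\alpha_{k+1}\beta_{k+1}$ (which lies inside $\alpha_{k+2} = (\alpha_{k+1}^2\beta_{k+1})^{n_{k+2}}$, hence inside $w$) contains $\beta_k^3$ across the junction. The second and main obstacle is to rule out \emph{every} occurrence of $\beta_k^3$ inside $\alpha_{k+1}$, which requires exploiting some global structure rather than checking positions one by one.

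To resolve this, I would exploit the strict periodicity $\alpha_{k+1} = (\alpha_k^2\beta_k)^{n_{k+1}}$. Since $n_{k+1}\geq 2$, the word $\alpha_{k+1}$ is a window of length $\geq 2\cdot 3N_k$ into the infinite periodic word of period $\alpha_k^2\beta_k$ (whose length is exactly $3N_k$), so every length-$3N_k$ factor of $\alpha_{k+1}$ is a cyclic rotation of $\alpha_k^2\beta_k$. All such rotations share the same letter frequencies, and by Lemma~\ref{lem:density} one computes
$$
\varphi_b(\alpha_k^2\beta_k) \;=\; \varphi_b(\alpha_{k+1}) \;=\; \tfrac{1}{2}(1 - 3^{-(k+1)}) \;\neq\; \tfrac{1}{2}(1 + 3^{-k}) \;=\; \varphi_b(\beta_k) \;=\; \varphi_b(\beta_k^3).
$$
Hence $\beta_k^3$ is not a cyclic rotation of $\alpha_k^2\beta_k$ and therefore does not factor $\alpha_{k+1}$. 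This completes the plan: $\alpha_{k+1}$ is a factor of $w$ of length $N_{k+1}$ not containing the length-$3N_k$ factor $\beta_k^3$, so $Rec_w(3N_k) \geq N_{k+1} + 1 \geq N_k^\gamma + 1 > 3^{-\gamma}(3N_k)^\gamma$ for all $k \geq j_1$, giving the bound for infinitely many $n$.
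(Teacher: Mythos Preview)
Your proof is correct and follows essentially the same approach as the paper: both argue that $\alpha_{k+1}$ contains no occurrence of $\beta_k^3$ via the cyclic-rotation/frequency argument using Lemma~\ref{lem:density}, then combine with inequality~(\ref{ineq:N_{j+1}}). The paper packages the non-occurrence statement into Lemma~\ref{lem:occurrence beta^3} and cites it, whereas you reprove the needed special case inline (and helpfully make explicit that $\beta_k^3$ is indeed a factor of $w$), but the underlying reasoning is identical.
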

\begin{proof}
By Lemma \ref{lem:occurrence beta^3}, the factor $\alpha_{k+1}$ does not contain any occurrence of $\beta_k^3$. Hence $$Rec_w(3N_k) = Rec_w(|\beta_k^3|)\geq |\alpha_{k+1}|=N_{k+1}$$
and by (\ref{ineq:N_{j+1}}), for every $k \gg 1$,
$$Rec_w(3N_k) \geq N_{k+1} \geq N_k^\gamma = 3^{-\gamma}(3N_k)^\gamma,$$
proving that $Rec_w(n) \geq 3^{-\gamma}n^\gamma$ infinitely often.
\end{proof}

\begin{proof}[{Proof of Proposition \ref{prop:arbitrary rec}}]
By Corollary \ref{cor:lower bound on Rec}, we see that $$\overline{\lim_{n\rightarrow \infty}} \log_n Rec_w(n) \geq \gamma.$$
By Lemma \ref{lem:factors alpha beta}, every factor $u$ of $W$ of length $|u|\leq \widetilde{N}_k$ factors $\alpha_k\beta_k$ or $\beta_k\alpha_k$, and by Lemma \ref{lem:pre every 7}, every factor of $w$ of length $7N_k$ contains both $\alpha_k \beta_k$ and $\beta_k \alpha_k$ as factors. Hence $Rec_w(\widetilde{N}_k)\leq 7N_k$. 

Let $n \geq \widetilde{N}_1$ be arbitrary. Let $k$ be such that $\widetilde{N}_k \leq n\leq \widetilde{N}_{k+1}$. Then (recall (\ref{ineq:N_{j+1}})), $$Rec_w(n)\leq Rec_w(\widetilde{N}_{k+1})\leq 7N_{k+1} \leq 14 N_k^\gamma\leq 14\cdot 2^\gamma \cdot \widetilde{N}_k^\gamma\leq cn^\gamma$$ for some constant $c>0$. Hence, together with the above observation, $$\overline{\lim_{n\rightarrow \infty}} \log_n Rec_w(n)=\gamma.$$ 
By Proposition \ref{prop:linear complexity}, the complexity of $w$ is linear.
\end{proof}

\section{Convolution algebras of arbitrary filter dimensions}

We next apply the words constructed in Proposition \ref{prop:arbitrary rec} to construct finitely generated simple algebras of arbitrary filter dimensions, relating the recurrence functions of these words with the return functions of the associated convolution algebras; throughout, we establish a quantitative form of the classical simplicity argument for convolution algebras of minimal, essentially principal Hausdorff \'etale groupoids (see \cite[Theorem 4.1]{BCFS} and \cite[Proposition 4.1]{Nek} as well as \cite{Ste16,CE}).

\begin{prop} \label{prop:upper bound Ret}
Retain the construction and definitions from Section \ref{sec:construction of words with prescribed recurrence}. Then $$Ret_{\mathbbm{k}[\mathfrak{G}_w]}(n) \preceq n^\gamma.$$
\end{prop}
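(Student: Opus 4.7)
The plan is to quantify the standard simplicity argument for convolution algebras of minimal étale groupoids, executed as an ``isolate, then spread'' scheme, both steps of which must be arranged to stay within $V^{O(n^\gamma)}$. Given $0 \neq a \in V^n \subseteq W_n$, I expand $a = \sum_{d,\vec s} c_{d,\vec s}\, \ind_{\{d\}\times Z_{\vec s}}$ with $d \in [-n,n]$ and $Z_{\vec s} = \{x \co x[-n,n]=\vec s\}$, and fix a nonzero coefficient $c_0 := c_{d_0,u_0}$. The goal is to exhibit $1 \in V^{O(n^\gamma)} a V^{O(n^\gamma)}$. Choose $k$ minimal with $\widetilde N_k \geq 2n$; by the estimate (\ref{ineq:N_{j+1}}) this yields $N_k \lesssim n^\gamma$. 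By Lemma~\ref{lem:factors alpha beta}, $u_0$ factors one of $\alpha_k\beta_k$ or $\beta_k\alpha_k$; let $v$ be that one, occurring at offset $\ell$, and let $U := \{x \co x[-n-\ell,\,-n-\ell+|v|-1]=v\}$, so that $U \subseteq Z_{u_0}$, $|v|=2N_k$, and $e_U := \ind_{\{0\}\times U} \in V^{O(n^\gamma)}$.

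\emph{Isolation step.} I would compute $b := e_U * \ind_T^{*(-d_0)} * a * e_U$: each summand contributes $c_{d,\vec s}\,\ind_{\{d-d_0\}\times(U \cap Z_{\vec s} \cap T^{d_0-d}(U))}$. Terms with $d = d_0,\,\vec s \neq u_0$ vanish since distinct cylinders of the same length are disjoint and $U \subseteq Z_{u_0}$. For $d \neq d_0$, one has $0 < |d-d_0| \leq 2n \leq \widetilde N_k < |v|$, so a nonempty $U \cap T^{d_0-d}(U)$ would force the word $v$ to have period $|d-d_0|$, contradicting Lemma~\ref{lem:aperiodic}. Thus $b = c_0 e_U$, so $e_U \in V^{O(n^\gamma)}\, a\, V^{O(n^\gamma)}$ up to scalar.

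\emph{Spreading step.} By Lemma~\ref{lem:pre every 7}, every factor of $w$ of length $7N_k$ contains $v$, so the first-visit time $\tau(x) := \min\{j \geq 0 \co T^j(x) \in U\}$ is bounded by $R := 7N_k = O(n^\gamma)$. The sets $U^{(j)} := \{x \co \tau(x)=j\}$ partition $\mathcal X$, and the sub-cylinder $Y_j := T^j(U^{(j)}) \subseteq U$ is determined by positions in a window of length $j+|v| = O(n^\gamma)$, so $e_{Y_j} := \ind_{\{0\}\times Y_j} \in V^{O(n^\gamma)}$. A direct convolution check gives $\ind_T^{*(-j)} * e_{Y_j} * \ind_T^{*j} = \ind_{\{0\}\times T^{-j}(Y_j)} = \ind_{\{0\}\times U^{(j)}}$, and since $Y_j \subseteq U$ one has $e_{Y_j} = e_{Y_j} * e_U = c_0^{-1} e_{Y_j} * b$. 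Summing over $j$ yields
$$
1 \;=\; \ind_{\{0\}\times\mathcal X} \;=\; c_0^{-1}\sum_{j=0}^{R} \bigl(\ind_T^{*(-j)} * e_{Y_j} * e_U * \ind_T^{*(-d_0)}\bigr) * a * \bigl(e_U * \ind_T^{*j}\bigr),
$$
where each flanking product lies in $V^{O(n^\gamma)}$, so $1 \in V^{O(n^\gamma)}\, a\, V^{O(n^\gamma)}$ and $Ret_{\mathbbm k[\mathfrak G_w]}(n) \preceq n^\gamma$.

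The main obstacle to watch for is the choice of $v$. Aperiodicity via Lemma~\ref{lem:aperiodic} requires $\widetilde N_k \geq 2n$, forcing $|v| \asymp n^\gamma$; feeding this into the general bound $Rec_w(m) \lesssim m^\gamma$ would inflate the spreading shifts to $n^{\gamma^2}$, which is too large. The bound is salvaged by Lemma~\ref{lem:pre every 7}, which tells us that the specific factor $v = \alpha_k\beta_k$ has recurrence $\leq 7N_k = O(|v|)$, keeping the shifts $j$ and cylinder radii $|Y_j|$ at the scale $n^\gamma$ matching $|v|$ itself.
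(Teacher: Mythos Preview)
Your proof is correct and follows the same two-stage scheme as the paper's own argument: isolate the idempotent $e_U$ by sandwiching with cylinder indicators and invoking the aperiodicity of $\alpha_k\beta_k$ (Lemma~\ref{lem:aperiodic}), then spread $e_U$ back to $1$ using that this specific $v$ recurs within $7N_k$ (Lemma~\ref{lem:pre every 7}); the crucial observation that $Rec_w(|v|)=O(|v|)$ for $v=\alpha_k\beta_k$ rather than the generic $O(|v|^\gamma)$ is exactly what the paper exploits as well. The only substantive variation is the spreading decomposition: the paper writes $\ind_{\{0\}\times\mathcal X}=\sum_{u\in L_w(7N_k)}\ind_{\{0\}\times I_u}$ and explicitly conjugates each summand through $\ind_{\{0\}\times W}$ (its Lemma~\ref{lem:every 8}), whereas you partition $\mathcal X$ by first-visit times to $U$ --- a slightly more dynamical but equivalent bookkeeping device landing in the same $V^{O(N_k)}$. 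One trivial off-by-one: since $|u_0|=2n+1$, Lemma~\ref{lem:factors alpha beta} strictly needs $\widetilde N_k\geq 2n+1$ rather than $\geq 2n$, which does not affect the estimate.
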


\begin{proof}
Fix $n \gg 1$ and $0\neq f\in W_n$, written as $f=\sum_{i=1}^m \alpha_i \ind_{F_i}$ where $F_i=\{d_i\}\times Z_i$, each $d_i\in [-n,n]$ and $$Z_i = \{x\in \mathcal{X}\ \colon \ x[-n]=z_{i,-n},\dots,x[n]=z_{i,n} \}$$
for some $z_{i,j}\in \Sigma$. Since $f\neq 0$, it follows that for some $g=(k,\xi)\in  \mathbb{Z}\times \mathcal{X}$, we have that $f(g)\neq 0$. Denote $A = \{1\leq i\leq m\ \colon \ g\in F_i\}$. Thus
$$0\neq f(g)=\sum_{i=1}^m \alpha_i \ind_{F_i}(g) = \sum_{ i \in A } \alpha_i.$$
Consider any compact open subset $\emptyset \neq W\subseteq \mathcal{X}$ such that:
\begin{enumerate}[(i)]
    \item $\{k\} \times W \subseteq \bigcap_{i\in A} F_i$
    \item $\left(\{k\}\times W\right)\cap F_j = \emptyset$ for all $j\notin A$
    \item $W\cap T^d(W) = \emptyset$ for all $0<|d|\leq 2n$.
\end{enumerate} For instance, if we take $W \subseteq \{x\in \mathcal{X}\ \colon\ x[-n,n]=\xi[-n,n] \}$ then $\{k\} \times W\subseteq F_i$ for all $i\in A$ and if $j\notin A$ then $\left(\{k\} \times W\right) \cap F_j = \emptyset$.
Given such $W$, let us compute
\begin{eqnarray} \label{eq:1f1}
\ind_{\{-k\} \times T^k(W)} * f * \ind_{\{0\}\times W} & = & \sum_{i\in A} \ind_{\{-k\} \times T^k(W)} * \alpha_i \ind_{\{d_i\} \times Z_i} * \ind_{\{0\}\times W} \\ & + & \sum_{j\notin A} \ind_{\{-k\} \times T^k(W)} * \alpha_j \ind_{\{d_j\} \times Z_j} * \ind_{\{0\}\times W}. \nonumber
\end{eqnarray}
If $i\in A$, then $d_i = k$ and $W \subseteq Z_i$, hence
\begin{eqnarray*}
\ind_{\{-k\} \times T^k(W)} * \ind_{\{d_i\} \times Z_i} * \ind_{\{0\} \times W} & = & \ind_{\{-k\} \times T^k(W)} * \ind_{\{d_i\} \times W\cap Z_i} \\ & = & \ind_{\{-k\} \times T^k(W)} * \ind_{\{k\}\times W} \\ & = & \ind_{\{0\} \times W}.
\end{eqnarray*}

Now let us focus on the second sum in (\ref{eq:1f1}). If $j\notin A$ then either $Z_j \cap W = \emptyset$ or $d_j \neq k$. 
If $Z_j\cap W = \emptyset$ then
\begin{eqnarray*} \ind_{\{-k\} \times T^k(W)} * \ind_{\{d_j\} \times Z_j} * \ind_{\{0\}\times W} & = & \ind_{\{-k\} \times T^k(W)} * \ind_{\{d_j\} \times W \cap Z_j } \\ & = & \ind_{\{-k\} \times T^k(W)} * \ind_{\emptyset} = 0. \end{eqnarray*}
If $d_j \neq k$ then
 \begin{eqnarray*}
 \ind_{\{-k\} \times T^k(W)} * \ind_{\{d_j\} \times Z_j} * \ind_{\{0\}\times W} & = & \ind_{\{d_j-k\} \times Z_j \cap T^{k-d_j}(W)} * \ind_{\{0\}\times W} \\ & = & \ind_{\{d_j-k\} \times Z_j \cap T^{k-d_j}(W) \cap W} \\ & = & \ind_\emptyset = 0
 \end{eqnarray*}
since $k-d_j\neq 0 $ and $|k-d_j|\leq |k|+|d_j|\leq  2n$ since $d_j \in [-n,n]$ and $k$ must be equal to one of the $d_i$'s --- for otherwise $f(g)=f(k,\xi)=0$ --- and in particular, $|k|\leq n$ as well. For conclusion,
\begin{eqnarray} \label{eq:1f1 2}
\ind_{\{-k\} \times T^k(W)} * f * \ind_{\{0\}\times W} & = & \left( \sum_{i \in A} \alpha_i \right) \ind_{\{0\}\times W}.
\end{eqnarray}

We now estimate `how complicated' $W$ should be. Let $l$ be the minimum index such that $2n + 1 \leq \widetilde{N}_{l+1}$ (hence $2n + 1 > \widetilde{N}_l$). Recall that we assume $n\gg 1$ so we may assume $l\gg 1$ is large enough to ensure the bounds (\ref{ineq:N_{j+1}}). Then by Lemma \ref{lem:factors alpha beta}, the word $\xi[-n,n]$ factors either $\alpha_{l+1} \beta_{l+1}$ or $\beta_{l+1} \alpha_{l+1}$, in either case, of length 
\begin{align} \label{eq:ineq N}
2N_{l+1} \leq 4 N_l^\gamma \leq 2^\gamma 4 \widetilde{N}_{l}^\gamma < 2^\gamma 4 (2n+1)^\gamma\leq Kn^\gamma
\end{align}
for a suitable constant $K\geq 1$. 

Without loss of generality, assume that $\xi[-n,n]$ factors $\alpha_{l+1} \beta_{l+1}$, say, $\alpha_{l+1} \beta_{l+1} = \phi \xi[-n,n] \psi$ for some $|\phi|=p,|\psi|=q$. We let $$W=\{x\in \mathcal{X}\ \colon\ x[-n-p,n+q] = \alpha_{l+1} \beta_{l+1} \}.$$ Notice that $W \subseteq \{x\in \mathcal{X}\ \colon\ x[-n,n]=\xi[-n,n]\}$. Hence conditions (i),(ii) above are satisfied. Regarding condition (iii), suppose that $W \cap T^d(W)\neq \emptyset$ for some $d\neq 0$, $-2n\leq d\leq 2n$. Then there exists some $x\in W$ --- thus $x[-n-p,n+q]=\alpha_{l+1} \beta_{l+1}$ --- such that $x\in T^d(W)$, implying $x[-n-p-d,n+q-d]=\alpha_{l+1} \beta_{l+1}$. It follows that for every $1\leq i\leq 2N_{l+1}-|d|$, we have that $$\alpha_{l+1} \beta_{l+1}[i]=\alpha_{l+1}\beta_{l+1}[i+|d|],$$ contradicting Lemma \ref{lem:aperiodic} (recall that $|d|\leq 2n \leq \widetilde{N}_{l+1})$. 
Hence $$\ind_{\{0\}\times W} \in W_{2N_{l+1}}.$$ 
Together with (\ref{eq:1f1 2}), we have
$$
\underbrace{\ind_{\{-k\} \times T^k(W)}}_{\in W_{2N_{l+1} + n}} * f * \underbrace{\ind_{\{0\} \times W}}_{\in W_{2N_{l+1}}} = \underbrace{\left(\sum_{i\in A} \alpha_i\right)}_{\neq 0} \ind_{\{0\} \times W}
$$
so
$$ \ind_{\{0\}\times W} \in W_{2N_{l+1} + n} * f * W_{2N_{l+1}}. $$
We now need the following:
\begin{lem} \label{lem:every 8}
There exists some (universal) constant $c\in \mathbb{N}$ such that $$\ind_{\{0\}\times \mathcal{X}} \in W_{cN_{l+1}} * \ind_{\{0\}\times W} * W_{cN_{l+1}}.$$
\end{lem}
\begin{proof}
Given $u\in \Sigma^*$, we let $I_u:=\{x\in \mathcal{X}\ \colon\ x[0,|u|-1]=u\}$.
We can write $$\ind_{\{0\}\times \mathcal{X}} = \sum_{u\in L_w(7N_{l+1})} \ind_{\{0\}\times I_u}.$$ By Lemma \ref{lem:pre every 7}, for each $u\in L_w(7N_{l+1})$ we can write $u=\eta_u \alpha_{l+1} \beta_{l+1} \theta_u$. Denote $|\eta_u|=e_u,\ |\theta_u|=t_u$ so $e_u+t_u=5N_{l+1}$.
Now:
\begin{eqnarray*}
    \ind_{\{0\}\times I_u} & = & \ind_{\{0\}\times \{x\in \mathcal{X}\ \colon\ x[0,e_u-1]=\eta_u\}} * \\ & & \ind_{\{0\}\times \{x\in \mathcal{X} \ \colon \ x[e_u,e_u+2N_{l+1}-1]=\alpha_{l+1}\beta_{l+1}\}} * \\ & & \ind_{\{0\}\times \{x\in \mathcal{X} \ \colon \ x[e_u+2N_{l+1},7N_{l+1}-1]=\theta_u\}} \\
    & = & \ind_{\{0\}\times I_{\eta_u}} * \left( \ind_{\{-(n+p+e_u)\}\times \mathcal{X}} * \ind_{\{0\}\times W} * \ind_{\{n+p+e_u\}\times \mathcal{X}} \right) \\ & & * \left( \ind_{\{-(e_u + 2N_{l+1})\}\times \mathcal{X}} * \ind_{\{0\}\times I_{\theta_u}} * \ind_{\{e_u + 2N_{l+1}\}\times \mathcal{X}} \right) \\
    & = & \underbrace{\ind_{\{0\}\times I_{\eta_u}}}_{\in W_{e_u}} * \underbrace{\ind_{\{-(n+p+e_u)\}\times \mathcal{X}}}_{\in W_{n+p+e_u}} * \ind_{\{0\}\times W} * \\ & & \underbrace{\ind_{\{-(n+q+1)\}\times \mathcal{X}}}_{\in W_{n+q+1}} * \underbrace{\ind_{\{0\}\times I_{\theta_u}}}_{\in W_{t_u}} * \underbrace{\ind_{\{e_u+2N_{l+1}\}\times \mathcal{X}}}_{\in W_{e_u + 2N_{l+1}}} \\ & \in & W_{n+p+2e_u} * \ind_{\{0\}\times W} * W_{2N_{l+1}+n+q+e_u+t_u+1} \\ & \subseteq & W_{12N_{l+1}} *  \ind_{\{0\}\times W} * W_{9N_{l+1}},
\end{eqnarray*}
and the claim is proved.
\end{proof}

We return to complete the proof of Proposition \ref{prop:upper bound Ret}. By Lemma \ref{lem:every 8} and (\ref{eq:ineq N}), 
\begin{eqnarray*}
\ind_{\{0\}\times \mathcal{X}} & \in & W_{cN_{l+1}} * \ind_{\{0\}\times W} * W_{cN_{l+1}} \\ & \in & W_{cN_{l+1}} * W_{2N_{l+1}+n} * f * W_{2N_{l+1}} * W_{cN_{l+1}} \\ & \subseteq & 
W_{(c+2)N_{l+1}+n} * f * W_{(c+2)N_{l+1}} \\ & \subseteq &
W_{(c+2)\lceil Kn^\gamma \rceil + n}  * f * W_{(c+2)\lceil Kn^\gamma \rceil} \\ & \subseteq & V^{8((c+3)\lceil Kn^\gamma \rceil)} * f * V^{8(c+2)\lceil Kn^\gamma \rceil},
\end{eqnarray*}
proving that $Ret_{\mathbbm{k}[\mathfrak{G}_w]}(n) \preceq n^\gamma$.
\end{proof}

\begin{proof}[{Proof of Theorem \ref{thm:main Bavula}}]
Let $\alpha\geq 1$ be given. We may assume that $\alpha>1$ (e.g. the filter dimension of the first Weyl algebra is equal to $1$). Consider the above construction of $w$ with respect to $\gamma = \alpha$. By Lemma \ref{lem:lower bound on Ret}, we have that $Ret_{\mathbbm{k}[\mathfrak{G}_w]}(n) + n \succeq Rec_w(n)$ or, in other words, $$C \cdot Ret_{\mathbbm{k}[\mathfrak{G}_w]}(Cn) + Cn \geq Rec_w(n)$$ for some $C>0$. 
By Corollary \ref{cor:lower bound on Rec},
$$Rec_w(n) \geq 3^{-\alpha} n^\alpha$$
infinitely often.
Since we take $\alpha>1$, these two inequalities imply that for some constant $c>0$,
$$Ret_{\mathbbm{k}[\mathfrak{G}_w]}(n)\geq cn^\alpha$$
infinitely often. Therefore
$$\fdim(\mathbbm{k}[\mathfrak{G}_w])=\overline{\lim_{n\rightarrow \infty}} \frac{\log Ret_{\mathbbm{k}[\mathfrak{G}_w]}(n)}{\log n} \geq \alpha.$$
By Proposition \ref{prop:upper bound Ret},
$$Ret_{\mathbbm{k}[\mathfrak{G}_w]}(n) \preceq n^\alpha$$
so $\fdim (\mathbbm{k}[\mathfrak{G}_w]) \leq \alpha$. Finally, since $p_w(n) \sim n$ (by Proposition \ref{prop:linear complexity}), it follows by \cite[Proposition 4.5]{Nek} that the growth of $\mathbbm{k}[\mathfrak{G}_w]$ is quadratic so $\GK(\mathbbm{k}[\mathfrak{G}_w])=2$. The proof is completed.
\end{proof}

\end{document}